\numberwithin{equation}{section}
\newcommand{\R}{\mathbb{R}}
\newcommand{\N}{\mathbb{N}}
\newcommand{\chookrightarrow}{\mathrel{\lhook\joinrel\relbar\kern-.8ex\joinrel\lhook\joinrel\rightarrow}}
\newcommand{\warrow}{\rightharpoonup}
\newcommand{\e}{\varepsilon}
\newcommand{\ei}{{\e^{-1}}}
\DeclareMathOperator{\spa}{span}
\DeclareMathOperator{\Id}{Id}
\newtheorem{satz}{Proposition}[section] 
\newtheorem{lem}[satz]{Lemma} 
\newtheorem{bem}[satz]{Remark}
\newtheorem{thm}[satz]{Theorem}
\newtheorem{kor}[satz]{Corollary}
\theoremstyle{remark}
\definecolor{gray}{gray}{0.50}
\definecolor{lred}{rgb}{1.0,0.5,0.5}
\definecolor{dgreen}{rgb}{0,1,1}
\definecolor{luh-dark-blue}{rgb}{0.0, 0.313, 0.608}
\title[Weak solutions to a two-phase thin film model with insoluble surfactant]{Weak Solutions to a Two-Phase Thin Film Model with Insoluble Surfactant Driven by Capillary Effects}
\author{Gabriele Bruell}
\address{Department of Mathematical Sciences, Norwegian University of Science and Technology, 7491 Trondheim, Norway}
\email{gabriele.bruell@math.ntnu.no}
\subjclass{35D30, 35K41, 35K65, 35Q35}
\keywords{Two-phase thin film; surfactant; degenerate parabolic system; non-negative global weak solutions.}
\begin{document}
\maketitle

\begin{abstract} 
Of concern is the study of a system of three equations describing the motion of a viscous complete wetting two-phase thin film endowed with a layer of insoluble surfactant on the surface of the upper fluid under the effects of capillary forces.
 The governing equations for the film heights of the two-phase flow are degenerate, parabolic and strongly coupled fourth-order equations, which are additionally coupled to a second-order parabolic transport equation for the surfactant concentration. A result on the existence of non-negative global weak solutions is presented.
\end{abstract}

\section{Introduction} 
\allowdisplaybreaks
Consider two immiscible, incompressible Newtonian and viscous thin liquid films on top of each other on a solid substrate. We assume that there is no contact angle between the two-phase flow and the bottom, which places the setting in the context of complete wetting. The interface of the upper fluid is endowed with a layer of insoluble surfactant. Surfactants act on the surface of a fluid  by lowering the surface tension and induce a  twofold dynamic. On the one hand, the resulting surface gradients influence the dynamics of the fluid film. On the other hand, the surfactants spread along the interface, which is called \emph{Marangoni effect}. 
Recently, a system describing the dynamics of a two-phase thin film with insoluble surfactant has been derived in \cite{B1}, by the method of lubrication approximation and cross-sectional averaging.
Considering capillary effects as the only driving force and neglecting gravitational as well as intermolecular (van der Waals) forces, the system we are studying is parabolic, degenerated, strongly coupled and  given by 
\begin{align}
\nonumber
&\partial_t f +\partial_x \left[ f\left(\displaystyle{\frac{Rf^2}{3}}\partial_x^3 f + S\mu \left(\displaystyle{\frac{f^2}{3}} +\displaystyle{\frac{fg}{2}} \right) \partial_x^3(f+g) +\mu 	\displaystyle{\frac{f}{2}}\partial_x\sigma(\Gamma)\right)\right]=0,\\[15pt]
\label{system2}
& \partial_t g + \partial_x \left[g\left( \displaystyle{\frac{Rf^2}{2}} \partial_x^3 f +S\left(  \displaystyle{\frac{g^2}{3}} +\mu \left( \displaystyle{\frac{f^2}{2}}+fg\right)\right)\partial_x^3 (f+g) 
+\left(\mu f +\displaystyle{\frac{g}{2}} \right) \partial_x \sigma(\Gamma)\right)\right]=0, \\[15pt]
\nonumber
&\partial_t \Gamma +\partial_x\left[\Gamma\left( \displaystyle{\frac{Rf^2}{2}} \partial_x^3 f +S\left(  \displaystyle{\frac{g^2}{2}}+\mu \left(\displaystyle{\frac{f^2}{2}}+fg\right)\right)\partial_x^3 (f+g)
+\left(\mu f+g\right) \partial_x \sigma(\Gamma)\right)- D\partial_x\Gamma\right]=0
\end{align}
in $\Omega_\infty:=(0,\infty)\times (0,L)$, with $\Omega_\infty$ being the time-space domain and the lateral boundary of the system is given at $x=0,L$. The unknowns are the functions $f=f(t,x)$ and $g=g(t,x)$ parameterizing the interfaces separating the fluids and the upper fluid from air, respectively, and the surfactant concentration $\Gamma=\Gamma(t,x)$. 
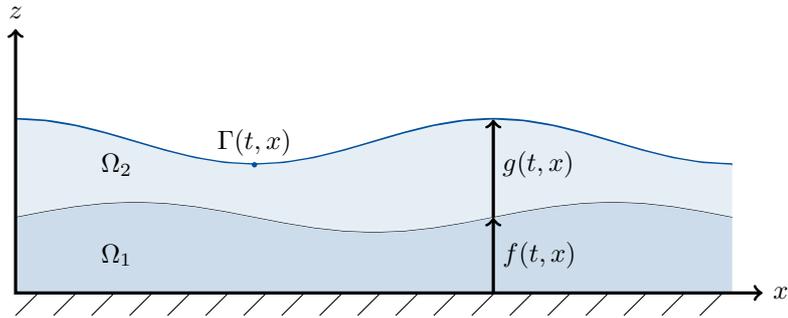
\begin{figure}[h]
\centering
\begin{tikzpicture}[domain=0:3*pi, scale=1] 
\draw[color=black] plot (\x,{0.3*cos(\x r)+2}); 
\draw[very thick, smooth, variable=\x, luh-dark-blue] plot (\x,{0.3*cos(\x r)+2}); 
\fill[luh-dark-blue!10] plot[domain=0:3*pi] (\x,{0.2*sin(\x r)+1}) -- plot[domain=3*pi:0] (\x,{0.3*cos(\x r)+2});
\draw[color=black] plot (\x,{0.2*sin(\x r)+1});
\fill[luh-dark-blue!20] plot[domain=0:3*pi] (\x,0) -- plot[domain=3*pi:0] (\x,{0.2*sin(\x r)+1});
\draw[very thick,<->] (3*pi+0.4,0) node[right] {$x$} -- (0,0) -- (0,3.5) node[above] {$z$};
\draw[very thick,->] (2*pi,1) -- (2*pi,2.3);
\node[right] at (2*pi,1.7) {$g(t,x)$};
\draw[very thick,->] (2*pi,0) -- (2*pi,1);
\node[right] at (2*pi,0.5) {$f(t,x)$};
\coordinate[label=above:{$\Gamma(t,x)$}] (A) at (pi,1.7);
\fill[color=luh-dark-blue] (A) circle (1pt);
\node[right] at (1,0.5) {$\Omega_1$};
\node[right] at (1,1.7) {$\Omega_2$};
\draw[-] (0,-0.3) -- (0.3, 0);
\draw[-] (0.5,-0.3) -- +(0.3, 0.3);
\draw[-] (1,-0.3) -- +(0.3, 0.3);
\draw[-] (1.5,-0.3) -- +(0.3, 0.3);
\draw[-] (2,-0.3) -- +(0.3, 0.3);
\draw[-] (2.5,-0.3) -- +(0.3, 0.3);
\draw[-] (3,-0.3) -- +(0.3, 0.3);
\draw[-] (3.5,-0.3) -- +(0.3, 0.3);
\draw[-] (4,-0.3) -- +(0.3, 0.3);
\draw[-] (4.5,-0.3) -- +(0.3, 0.3);
\draw[-] (5,-0.3) -- +(0.3, 0.3);
\draw[-] (5.5,-0.3) -- +(0.3, 0.3);
\draw[-] (6,-0.3) -- +(0.3, 0.3);
\draw[-] (6.5,-0.3) -- +(0.3, 0.3);
\draw[-] (7,-0.3) -- +(0.3, 0.3);
\draw[-] (7.5,-0.3) -- +(0.3, 0.3);
\draw[-] (8,-0.3) -- +(0.3, 0.3);
\draw[-] (8.5,-0.3) -- +(0.3, 0.3);
\draw[-] (9,-0.3) -- +(0.3, 0.3);
\end{tikzpicture} 
\caption{Scheme of the two-phase thin film flow with insoluble surfactant}
\end{figure}
Here, the material constant $\mu:=\frac{\mu_2}{\mu_1}$ is the relative viscosity, where $\mu_1$ and $\mu_2$ denote the viscosity of the lower and the upper fluid, respectively, and $D>0$ is the surface diffusivity of the surfactant.
We assume the surface tension coefficient  $\sigma=\sigma(\Gamma)$ to be decreasingly dependent on the surfactant concentration. The  constants  
\[R:= \sigma_1^c+\sigma_2^c\mu\qquad \mbox{and}\qquad S:=\sigma_2^c\]
contain the surface tension coefficients $\sigma_1^c,\sigma_2^c>0$ of the interface of the lower and the upper fluid, respectively, which are independent of the surfactant concentration.
Further, \eqref{system2} is supplemented by  initial data at $t=0$ for the three unknowns
\begin{equation} \label{wID}
f(0,\cdot)=f^0,\quad g(0,\cdot)=g^0, \quad \Gamma(0,\cdot)=\Gamma^0
\end{equation}
and boundary conditions at $x=0,L$
\begin{equation}\label{wNB}
\begin{array}{lll}
&\partial_xf=\partial_xg=\partial_x\Gamma =0,\\[10pt]
&\partial_x^3f=\partial_x^3g=0.
\end{array}
\end{equation}

The difficulty in studying system \eqref{system2} relies  in particular in the two sources of degeneracies, where the  film heights may vanish on subsets of $(0,L)$. The existence of local strong solutions to \eqref{system2} has been shown in \cite{B1}. Owing to the degeneracy, it is in general not clear whether one can prove the existence of global solutions in a classical sense, which motivates the study of weak solutions.

If $g$ and $\Gamma$ vanish both, then the system reduces to the famous thin-film equation
\[\partial_t f+\partial_x \left[ f^n\partial_x^3 f\right]=0 \qquad \mbox{with}\quad n=3,\]
for which weak solutions were constructed first in the pioneering work by Bernis and Friedmann \cite{BF}. 
 Various contributions have also been dedicated to a two-phase generalization of the thin film equation. The study of weak solutions for a two-phase thin film system without surfactant has been addressed in  \cite{JKT} $(n=2)$ and \cite{EM,JKT} ($n=3$)\footnote{$n=2$: two-phase thin film with Navier-slip condition (on liquid-solid and liquid-liquid interface); $n=3$: two-phase thin film no-slip condition (on liquid-solid and liquid-liquid interface).}. 
Results regarding the existence of global non-negative weak solutions to a system describing the dynamics of a one-phase thin film with insoluble surfactant are subject in \cite{CT, EW3, GW, Wie}. In \cite{CT} additionally  gravitational forces are included and an upper bound for the non-negative weak solution for the surfactant concentration is stated ($\Gamma \leq 1$). 
 It turns out that the existence of an \emph{energy functional} becomes a crucial part in studying weak solutions of thin films, cf. e.g. \cite{CT, EW3, EW2, EM, GW}, as it provides necessary a-priori estimates, which allow by compactness arguments to extract convergent subsequences of weak solutions to regularized problems tending in the limit to a global weak solution of the original problem.

We impose the following assumptions (similar to \cite{GW}):
Given the surface tension coefficients $\sigma_1=\sigma_1^c$ and $\sigma_2$ of the form
\[\sigma_2(\Gamma)= \sigma_2^c+\sigma(\Gamma),\]
where $\Gamma$ is the surfactant concentration, we assume  the part of the surface tension,  which depends on $\Gamma$, to be non-increasing and the part of the surface tension, which is independent of the concentration of surfactant, to be strictly positive, that is  $\sigma_1^c, \sigma_2^c >0$. We want to emphasize that this in particular implies $R,S>0$.
Moreover, let $\Phi$ be a function, such that:
\begin{itemize}
\item[A1)] $\Phi \in C^2(\R)$ with $\Phi(1)=\Phi^\prime(1)=0$ and
\begin{equation}\label{p}
\Phi^{\prime\prime}(s)=-\frac{\sigma^\prime(s)}{s} \qquad \mbox{for all}\; s\in\R.
\end{equation}
\item[A2)] There exists $c_\Phi>0$ such that $\Phi^{\prime\prime}(s)\geq c_\Phi $ for all $s\in \R$.
\item[A3)] There exists $C_\Phi>0$ and some $r\in (0,1)$ for which $\Phi^{\prime\prime}(s)\leq C_\Phi(|s|^r+1) $ for all $s\in \R$.
\end{itemize}
In A1)--A3), we suppose the assumptions to hold  on the whole real line instead of the physically relevant range $[0,\infty)$. For our purpose, this is needed due to the fact that a-priori it is not clear whether the solution  we construct for the surfactant concentration is non-negative. Unfortunately, theses assumptions do not allow to consider surface tension profiles as commonly used and suggested in e.g. \cite{GG}. In \cite{EW3} the existence of non-negative weak solutions for the one-phase thin film with insoluble surfactant is shown under less restrictive assumptions on the surface profile, which allows for more general surface tension profiles.

 Strongly relying on the approaches in \cite{EW3, GW, Wie}, where global weak solutions to a one-phase thin film model with insoluble surfactant are proved and \cite{EM}, where the existence of global weak solutions to a two-phase thin film model is shown, this contribution combines these results and presents the existence of global weak solutions for the fourth-order two-phase thin film problem with insoluble surfactant \eqref{system2}. Moreover, we make evident that the solutions corresponding to non-negative initial data stay non-negative almost everywhere, which is achieved by similar methods as in \cite{EW3, EM, GW, Wie}. 
 
Let us begin with rewriting \eqref{system2} in a form more convenient for our purpose:
  \begin{align}
  \label{system22}
  \begin{split}
 &\partial_t f +\partial_x \left(\frac{f^{\frac{3}{2}}}{\sqrt{3}}J_f\right)=0,\\[10pt]
  &\partial_t g + \partial_x \left(\frac{\sqrt{3}}{2}g\sqrt{f}J_{f,g}+ \frac{g^{\frac{3}{2}}}{\sqrt{3}}J_g\right)=0, \\[10pt]
 &\partial_t \Gamma +\partial_x \left(\frac{\sqrt{3}}{2}\Gamma \sqrt{f}J_{f,g}+\frac{\sqrt{3}}{2}\Gamma \sqrt{g}J_g+\frac{1}{4}\Gamma g\partial_x \sigma(\Gamma)-D\partial_x\Gamma\right)=0,
 \end{split}
 \end{align}
 where $J_f, J_{f,g}$ and $J_g$ are given by
 \begin{align} 
 \label{Jf}
	&J_f:=\sqrt{f}\left[ \frac{f\partial_x^3((R+S\mu) f+S\mu g)}{\sqrt{3}}+\frac{\sqrt{3}}{2}\mu\left(S g \partial_x^3 (f+g)+\partial_x \sigma(\Gamma)\right)\right], \\[5pt]
 \label{Jfg}
	&J_{f,g}:=\sqrt{f} \left[ \frac{f\partial_x^3((R+S\mu) f+S\mu g)}{\sqrt{3}}+\frac{2}{\sqrt{3}}\mu\left(S g \partial_x^3 (f+g)+\partial_x \sigma(\Gamma)\right)\right],\\[5pt]
 \label{Jg}
	&J_g:=\sqrt{g} \left[\frac{S}{\sqrt{3}}g\partial_x^3 (f+g)+\frac{\sqrt{3 }}{2}\partial_x \sigma(\Gamma)\right].
\end{align}

Given $T\in (0,\infty]$, let $\Omega_T:= (0,T)\times (0,L)$ be the time-space domain. Furthermore, we denote by $\langle \cdot,\cdot \rangle_E$ the dual pairing between the spaces $E^\prime$ and $E$. The main theorem reads as follows:
\begin{thm}[Global Weak Solutions]\label{MT}
Let $f^0,g^0 \in H^1(0,L)$ and $\Gamma^0\in L_{2(r+1)}(0,L)$, where $r\in (0,1)$ corresponds to Assumption A3), be non-negative functions.  Then, there exists at least one global weak solution $(f,g,\Gamma)$ of problem \eqref{system2}--\eqref{wNB} in the sense that for all $T>0$
\begin{itemize}
\item[a)] the solution has the regularity
	\begin{align*}
	&f, g \in L_\infty(0,T;H^1(0,L))\cap C([0,T];C^\alpha([0,L]))\quad \mbox{for all}\quad \alpha \in \Big[0,\frac{1}{2}\Big), \\[5pt]
	&\Gamma \in L_\infty(0,T;L_2(0,L))\cap L_2(0,T;H^1(0,L)),\\[5pt]
	&\partial_t f, \partial_t g, \in L_2(0,T;(H^1(0,L))^\prime)\quad\mbox{and} \quad \partial_t \Gamma \in L_{\frac{3}{2}}(0,T;(W_3^1(0,L))^\prime),
	\end{align*} 
\item[b)] $(f,g,\Gamma)(0)= (f^0,g^0,\Gamma^0)$ and $f\geq 0$, $g\geq 0$, $\Gamma\geq 0$ in $\Omega_T$, where the claims for $\Gamma$ are to be understood as almost everywhere,
\item[c)] the mass of the fluids and the surfactant concentration is conserved, that is
	\begin{equation*}
	\|f(t)\|_{L_1(0,L)} = \|f^0\|_{L_1(0,L)},\quad \|g(t)\|_{L_1(0,L)} = \|g^0\|_{L_1(0,L)}, \quad  \|\Gamma(t)\|_{L_1(0,L)} = \|\Gamma^0\|_{L_1(0,L)}
	\end{equation*}
	for almost all $t\geq 0$,
	
\item[d)] defining the sets $\mathcal{P}_f:=\{(t,x)\in\Omega_T:\;f(t,x)>0\}$and $\mathcal{P}_g:=\{(t,x)\in\Omega_T:\; g(t,x)>0\}$,
	we have $\partial_x^3f,\partial_x^3 g \in L_2(\mathcal{P}_f\cap \mathcal{P}_g)$ and there exist functions $J_f^*,J_{f,g}^*,J_g^*\in L_2(\Omega_T)$, which can be identified on the set $\mathcal{P}_f\cap \mathcal{P}_g$ with $J_f,J_{f,g},J_g$ defined in \eqref{Jf}--\eqref{Jg} so that
	\begin{align}
	\label{MT11} &\displaystyle{\int_0^T} \langle \partial_t f(t),  \xi(t)\rangle_{H^1(0,L)}\,dt=\displaystyle{\int_{\Omega_T}} \left(\frac{f^{\frac{3}{2}}}{\sqrt{3}}J_f^*\right) \partial_x \xi \,d(x,t), \\[5pt]
	\label{MT22} &\displaystyle{\int_0^T} \langle \partial_t g(t),  \xi(t)\rangle_{H^1(0,L)}\,dt=\displaystyle{\int_{\Omega_T}} \left(\frac{\sqrt{3}}{2}g\sqrt{f}J_{f,g}^*+ \frac{g^{\frac{3}{2}}}{\sqrt{3}}J_g^*\right) \partial_x \xi \,d(x,t), \\[5pt]
	\label{MT3}
	\begin{split} &\displaystyle{\int_0^T} \langle \partial_t \Gamma(t),  \xi(t)\rangle_{W^1_3(0,L)}\,dt=\displaystyle{\int_{\Omega_T}} \left(\frac{\sqrt{3}}{2}\Gamma \sqrt{f}J_{f,g}^*+\frac{\sqrt{3}}{2}\Gamma \sqrt{g}J_g^* \right) \partial_x \xi \,d(x,t) \\[5pt]
	&\qquad\qquad\qquad\qquad\qquad\qquad  +\displaystyle{\int_{\Omega_T}} \left(\frac{1}{4}\Gamma g\partial_x \sigma(\Gamma)-D\partial_x\Gamma\right) \partial_x \xi \,d(x,t) 
	\end{split}
	\end{align}
	for all $\xi \in C^\infty(\overline{\Omega}_T)$,
\item[e)] the energy inequality 
\begin{align*}
\mathcal{E}(f,g,\Gamma)(T)+\mathcal{D}(f,g,\Gamma)(T) \leq \mathcal{E}(f^0,g^0,\Gamma^0)
\end{align*}
is satisfied, where
\begin{align*}
\mathcal{E}(f,g,\Gamma)(T):= \displaystyle{\int_0^L}\left\{ \frac{1}{2}\left( R |\partial_xf(T,x)|^2+S\mu|\partial_x(f+g)(T,x)|^2\right)+\mu\Phi(\Gamma(T,x))\right\}\,dx
\end{align*}
and
\begin{align*}
\mathcal{D}(f,g,\Gamma)(T) :=& -\int_{\mathcal{P}_f\cap\mathcal{P}_g} \left\{ f|J_{f}|^2 +g\mu|J_g|^2  +\frac{f \mu^2 }{4}\left[S g \partial_x^3 (f+g)+\partial_x \sigma(\Gamma) \right]^2 \right.\\[5pt]
&\qquad\qquad\quad  +\frac{ g \mu}{4}|\partial_x\sigma(\Gamma)|^2+\mu \Phi^{\prime\prime}(\Gamma)D|\partial_x \Gamma|^2 \Bigg\}\,d(x,t).
 \end{align*}
\end{itemize}
\end{thm}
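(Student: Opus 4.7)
The plan is to follow the Bernis--Friedman regularization scheme, adapted as in \cite{EM} for the two-phase thin film equation and as in \cite{EW3, GW, Wie} for the one-phase surfactant model. Specifically, I would lift the degeneracies in the formulation \eqref{system22}--\eqref{Jg} by replacing the terms $\sqrt{f}$ and $\sqrt{g}$ in the fluxes (and hence the mobilities $f^{3/2}, g^{3/2}, g\sqrt{f}, \sqrt{g}$) with strictly positive approximations depending on a parameter $\varepsilon>0$, turning the regularized system into a non-degenerate strongly coupled parabolic problem. A Galerkin or Schauder fixed-point scheme, building on the local strong existence theory of \cite{B1}, should then produce solutions $(f_\varepsilon, g_\varepsilon, \Gamma_\varepsilon)$ on an arbitrary interval $[0,T]$, provided sufficient $\varepsilon$-uniform a priori bounds are in hand to rule out blow-up.

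The core a priori estimates come from the energy functional $\mathcal{E}$ in part e). Formally, testing the three equations in \eqref{system22} with the first variations of $\mathcal{E}$ with respect to $f$, $g$ and $\Gamma$, respectively, and summing, the cross-terms must conspire to produce precisely the dissipation $\mathcal{D}$; the reformulation \eqref{Jf}--\eqref{Jg} is designed so that this algebraic cancellation becomes transparent. Assumption A1), through $\Phi''(s)=-\sigma'(s)/s$, turns the Marangoni contribution $\partial_x \sigma(\Gamma_\varepsilon)$ into a dissipative term, while A2) yields coercive control $\mu D c_\Phi \|\partial_x \Gamma_\varepsilon\|_{L_2(\Omega_T)}^2$. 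Combined with mass conservation (obtained by testing each equation with the constant $1$), this provides $\varepsilon$-uniform bounds on $f_\varepsilon, g_\varepsilon$ in $L_\infty(0,T;H^1(0,L))$, on $\Gamma_\varepsilon$ in $L_\infty(0,T;L_2)\cap L_2(0,T;H^1)$, and on the fluxes $\sqrt{f_\varepsilon}J_f^\varepsilon$ etc.\ in $L_2(\Omega_T)$. Assumption A3), the sublinear growth $|s|^r$ with $r<1$, enters crucially here to keep $\sigma'(\Gamma_\varepsilon)\partial_x\Gamma_\varepsilon$ in a reflexive $L_p$ space and hence to yield the $L_{3/2}(0,T;(W_3^1)')$ bound on $\partial_t\Gamma_\varepsilon$ announced in part a).

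Armed with these bounds, an Aubin--Lions/Arzel\`a--Ascoli argument based on the embedding $H^1(0,L)\hookrightarrow C^\alpha([0,L])$ for $\alpha<1/2$ yields strong convergence $f_\varepsilon\to f$, $g_\varepsilon\to g$ in $C([0,T];C^\alpha)$ and $\Gamma_\varepsilon\to\Gamma$ strongly in $L_2(\Omega_T)$, together with the weak $L_2$ (resp.\ $L_p$) convergences needed to pass to the limit in the linear parts of \eqref{MT11}--\eqref{MT3}. The main technical obstacle, and the step in which the two-phase structure and the surfactant dynamics genuinely interact, is the identification of the limit fluxes $J_f^\ast,J_{f,g}^\ast,J_g^\ast$ outside the non-degeneracy set $\mathcal{P}_f\cap\mathcal{P}_g$: the third derivatives $\partial_x^3 f_\varepsilon$ and $\partial_x^3 g_\varepsilon$ converge only weakly in $L_2$ on $\mathcal{P}_f\cap\mathcal{P}_g$, and identifying nonlinear products such as $g_\varepsilon\sqrt{f_\varepsilon}\,\partial_x^3(f_\varepsilon+g_\varepsilon)$ and $\sqrt{f_\varepsilon}\,\partial_x\sigma(\Gamma_\varepsilon)$ requires a localization argument on sets where $fg$ is bounded away from zero together with an Egorov-type argument, in the spirit of \cite{EM, GW}. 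Non-negativity of $f$ and $g$ then follows from the classical Bernis--Friedman entropy estimate exploiting the degeneracy of the mobilities, whereas non-negativity of $\Gamma$ is recovered, as in \cite{EW3, Wie}, by testing the surfactant equation against a regularization of the negative part $\Gamma_-$ and using the coercivity A2) of $\Phi''$ to conclude $\Gamma_-\equiv 0$ almost everywhere.
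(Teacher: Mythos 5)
Your overall strategy coincides with the paper's: regularize the degenerate mobilities by an $\varepsilon$-shift, construct solutions of the regularized problems by a Galerkin scheme, derive $\varepsilon$-uniform bounds from the energy functional (testing with the variational derivatives of $\mathcal{E}$, with A1)--A3) playing exactly the roles you describe), pass to the limit by Aubin--Lions/Simon compactness, identify the limit fluxes on the sets $\{f>1/m\}\cap\{g>1/m\}$ (uniform convergence in $C([0,T];C^\alpha)$ makes an Egorov argument unnecessary), and prove non-negativity of $f,g$ by a smoothed negative-part estimate in the spirit of \cite{EM}. In these respects the proposal is sound.

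There is, however, one genuine gap: you regularize only the film-height mobilities and leave the surfactant equation untouched, proposing to obtain $\Gamma\geq 0$ by testing the $\Gamma_\varepsilon$-equation against a regularization of $\Gamma_-$ and invoking the coercivity A2). Without an additional regularization of the $\Gamma$-dependence this step cannot be justified: with only the $\varepsilon$-uniform bounds, the flux in the surfactant equation (a product of $\Gamma_\varepsilon\in L_6(\Omega_T)$ with $L_2$-fluxes) lies merely in $L_{3/2}(\Omega_T)$, so $\partial_t\Gamma_\varepsilon\in L_{3/2}(0,T;(W^1_3(0,L))')$, and the duality pairing $\langle\partial_t\Gamma_\varepsilon,\chi_\delta'(\Gamma_\varepsilon)\rangle$ needed for the chain rule $\frac{d}{dt}\int\chi_\delta(\Gamma_\varepsilon)\,dx$ is not defined, since $\partial_x\chi_\delta'(\Gamma_\varepsilon)=\chi_\delta''(\Gamma_\varepsilon)\partial_x\Gamma_\varepsilon$ is only in $L_2$, not $L_3$; the same integrability problem obstructs passing to the limit in the Galerkin identity for this test function. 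This is precisely why the paper replaces $\Gamma_\varepsilon$ in the transport terms by the truncation $\tau_\varepsilon(\Gamma_\varepsilon)$ of \eqref{taudef} (together with the truncated surface tension $\sigma_\varepsilon$): for fixed $\varepsilon$ this upgrades the flux $H_\Gamma^\varepsilon$ to $L_2(\Omega_T)$ and $\partial_t\Gamma_\varepsilon$ to $L_2(0,T;(H^1(0,L))')$ (Lemma \ref{GAtD}), so that the negative-part test is legitimate and yields $\Gamma_\varepsilon\geq 0$ a.e.\ already at fixed $\varepsilon$ (Theorem \ref{nonnG}); one must then also prove that $\tau_\varepsilon(\Gamma_\varepsilon)\to\Gamma$ and $\partial_x\sigma_\varepsilon(\Gamma_\varepsilon)\rightharpoonup\partial_x\sigma(\Gamma)$ as $\varepsilon\searrow 0$ (Lemma \ref{allc}), an identification step absent from your plan. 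Note also that the mechanism for $\Gamma_-\equiv 0$ is not the coercivity of $\Phi''$ but the sign of the diffusive term $-D\chi_\delta''(\Gamma_\varepsilon)|\partial_x\Gamma_\varepsilon|^2\leq 0$ combined with the bound $|\tau_\varepsilon(s)\chi_\delta''(s)|\leq K$, i.e.\ the linear vanishing of the transport flux in $\Gamma$; A2) is instead what guarantees the invertibility of $\Phi'$ used in the Galerkin ansatz and the $L_\infty L_2$-bound on $\Gamma_\varepsilon$.
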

Owing to the degeneracy of the system, proving the existence of non-negative global weak solutions to \eqref{system2}, requires a two-step compactness method. In accordance to \cite{EW3, EM, GW, Wie}, we construct first a family of suitably regularized, non-degenerate systems and prove by using Galerkin approximations, a-priori estimates and compactness arguments that there exist global weak solutions to the regularized problems (Section \ref{TRS}). In a second step we show that a sequence of weak solutions to the regularized problems tends in the limit to a non-negative weak solution of the original problem (Section \ref{EGWS}).

 \section{The Regularized Systems} \label{TRS}
 We define for every $\e \in (0,1]$ the function $a_\e : \R \longrightarrow \R_+$ by
\begin{equation*}
a_\e(s):= \e + \max\{0,s\}.
\end{equation*}
Furthermore, we introduce the function
\begin{equation*}
\mathcal{T}(s):= \left\{
\begin{array}{lcl}
s,&\mbox{if}&\; s\in (0,1), \\[5pt]
2-s,\quad &\mbox{if}&\; s\in [1,2], \\[5pt]
0, &\mbox{if}&\; s\geq 2,
\end{array} \right.
\qquad\qquad\qquad \mathcal{T}(s)= \mathcal{T}(-s),\quad\mbox{if}\quad s<0.
\end{equation*}
We put $\mathcal{T}_\e:= \ei\mathcal{T}(\cdot \,\e)$ for $\e \in (0,1]$
and set
\begin{equation*}
\sigma_\e(s):= \int_1^s \mathcal{T}_\e(\sigma^\prime(\tau))\, d\tau\qquad \mbox{for}\quad s\in \R.
\end{equation*}
Note that by construction  and Assumption S2), we find that $\sigma_\e\in C^{1,1}(\R)$ and
\begin{equation}\label{swell}
|\sigma_\e^\prime(s)|\leq |\sigma^\prime(s)| \qquad\mbox{for all}\quad  s\in \R.
\end{equation} 
Associated to $\sigma_\e$, we introduce a truncation of the identity
\begin{equation}\label{taudef}
\tau_\e(s):= s\frac{\sigma_\e^\prime(s)}{\sigma^\prime(s)}\qquad \mbox{for}\quad s\in \R.
\end{equation}
This is well-defined in view of \eqref{swell}. We emphasize that $\tau_\e$ is locally Lipschitz having compact support within $C_{\Phi}[-2\ei,2\ei]$ and 
\begin{equation}\label{abtau}
|\tau_\e(s)|\leq |s|\qquad \mbox{for}\quad s\in \R.
\end{equation} 
We introduce the regularized problem:
\begin{align}
\nonumber
&\partial_t f_\e +\partial_x \left[ a_\e(f_\e)\left(\displaystyle{\frac{Ra_\e(f_\e)^2}{3}}\partial_x^3 f_\e + S\mu \left(\displaystyle{\frac{a_\e(f_\e)^2}{3}} +\displaystyle{\frac{a_\e(f_\e)a_\e(g_\e)}{2}} \right) \partial_x^3(f_\e+g_\e) +\mu 	\displaystyle{\frac{a_\e(f_\e)}{2}}\partial_x\sigma_\e(\Gamma_\e)\right)\right]=0,\\[15pt]
\label{rsystem}
\begin{split}
& \partial_t g_\e + \partial_x \left[a_\e(g_\e)\left( \displaystyle{\frac{Ra_\e(f_\e)^2}{2}} \partial_x^3 f_\e +S\left(  \displaystyle{\frac{a_\e(g_\e)^2}{3}} +\mu \left( \displaystyle{\frac{a_\e(f_\e)^2}{2}}+a_\e(f_\e)a_\e(g_\e)\right)\right)\partial_x^3 (f_\e+g_\e) \right.\right.\\[15pt]
&
\left.\left.\qquad\qquad\quad +\left(\mu a_\e(f_\e) +\displaystyle{\frac{a_\e(g_\e)}{2}} \right) \partial_x \sigma_\e(\Gamma_\e)\right)\right]=0, 
\end{split}\\[15pt]
\nonumber
&\partial_t \Gamma_\e +\partial_x\left[\tau_\e(\Gamma_\e)\left( \displaystyle{\frac{Ra_\e(f_\e)^2}{2}} \partial_x^3 f_\e +S\left(  \displaystyle{\frac{a_\e(g_\e)^2}{2}}+\mu \left(\displaystyle{\frac{a_\e(f_\e)^2}{2}}+a_\e(f_\e)a_\e(g_\e)\right)\right)\partial_x^3 (f_\e+g_\e)\right.\right.\\[15pt]
\nonumber
&\left.\left.\qquad\qquad\quad+\left(\mu a_\e(f_\e)+a_\e(g_\e)\right) \partial_x \sigma_\e(\Gamma_\e)\right)- D\partial_x\Gamma_\e\right]=0
\end{align}
in $ \Omega_T $ supplemented by the initial and boundary conditions \eqref{wID}, \eqref{wNB}. The function $a_\e$ yields the regularizing effect that the system \eqref{rsystem} is no longer degenerate, but uniformly parabolic \footnote{the coefficients of the fourth-order terms in the equations for $f_\e$ and $g_\e$ are bounded from below by $\e>0$}, whereas the replacement by the truncation function $\tau_\e$  will be needed for proving the non-negativity of a weak solution $\Gamma$.
In accordance to \eqref{system22} we can rewrite the system above in a more compact form as
\begin{align} 
\nonumber
&\partial_t f_\e +\partial_x \left( \frac{1}{\sqrt{3}}a_\e(f_\e)^\frac{3}{2} J_{f}^\e\right) =0,\\[10pt]
\label{rrsystem}
& \partial_t g_\e + \partial_x \left(\frac{\sqrt{3}}{2}a_\e(g_\e)\sqrt{a_\e(f_\e)}J_{f,g}^\e+ \frac{1}{\sqrt{3}}a_\e(g_\e)^{\frac{3}{2}}J_{g}^\e\right)=0, \\[10pt]
\nonumber
& \partial_t \Gamma_\e +\partial_x \left(\frac{\sqrt{3}}{2}\tau_\e(\Gamma_\e) \sqrt{a_\e(f_\e)}J_{f,g}^\e+\frac{\sqrt{3}}{2}\tau_\e(\Gamma_\e)\sqrt{a_\e(g_\e)}J_{g}^\e+\frac{1}{4}\tau_\e(\Gamma_\e) a_\e(g_\e) \partial_x \sigma_\e(\Gamma_\e)-D\partial_x\Gamma_\e\right)=0,
\end{align}
 where 
 \begin{align*} 
	&J_{f}^\e:=\sqrt{a_\e(f_\e)}\left[ \frac{a_\e(f_\e)}{\sqrt{3}}\partial_x^3((R+S\mu) f_\e+S\mu g_\e)+\frac{\sqrt{3}}{2}\mu\left(S a_\e(g_\e) \partial_x^3 (f_\e+g_\e)+\partial_x \sigma_\e(\Gamma_\e)\right)\right], \\[5pt]
	&J_{f,g}^\e:=\sqrt{a_\e(f_\e)} \left[ \frac{a_\e(f_\e)}{\sqrt{3}}\partial_x^3((R+S\mu) f_\e+S\mu g_\e)+\frac{2}{\sqrt{3}}\mu\left(S a_\e(g_\e) \partial_x^3 (f_\e+g_\e)+\partial_x \sigma_\e(\Gamma_\e)\right)\right],\\[5pt]
	&J_{g}^\e:=\sqrt{a_\e(g_\e)} \left[\frac{S}{\sqrt{3}}a_\e(g_\e)\partial_x^3 (f_\e+g_\e)+\frac{\sqrt{3 }}{2}\partial_x \sigma_\e(\Gamma_\e)\right].
\end{align*}
We show that for any fixed $\e\in (0,1]$ the problem \eqref{rsystem}, supplemented by the initial and boundary conditions \eqref{wID}, \eqref{wNB}, admits a global weak solution.

\begin{thm}[Global Weak Solutions for the Regularized Systems]\label{T31}
Let $\e \in (0,1]$ be fixed and $(f^0,g^0,\Gamma^0) \in  (H^1(0,L))^2 \times L_{2(r+1)}(0,L)$, where $r\in (0,1)$ corresponds to Assumption A3). Then, for any $T>0$ there exists at least one triple of functions $(f_\e,g_\e, \Gamma_\e)$ having the regularity
\begin{align*}
&f_\e, g_\e \in L_\infty(0,T;H^1(0,L))\cap L_2(0,T;H^3(0,L))\cap C([0,T];C^\alpha([0,L])), \quad \alpha\in \Big[0,\frac{1}{2}\Big), \\[5pt]
&\Gamma_\e \in L_\infty(0,T;L_2(0,L))\cap L_2(0,T;H^1(0,L)), \\
&\partial_t f_\e, \partial_t g_\e, \partial_t \Gamma_\e\in L_2(0,T;(H^1(0,L))^\prime),
\end{align*}
satisfying
\begin{align}\label{T1}
&\displaystyle{\int_0^T}  \langle \partial_t f_\e(t), \xi(t) \rangle_{H^1(0,L)} \,dt= \displaystyle{\int_{\Omega_T}}  \left(\frac{a_\e(f_\e)^{\frac{3}{2}}}{\sqrt{3}}J_f^\e\right)\partial_x \xi \,d(x,t)  ,\\[5pt]
\label{T2}
&\displaystyle{\int_0^T}  \langle \partial_t g_\e(t), \xi(t) \rangle_{H^1(0,L)} \,dt  = \displaystyle{\int_{\Omega_T}} \left(\frac{\sqrt{3}}{2}a_\e(g_\e)\sqrt{a_\e(f_\e)}J_{f,g}^\e+ \frac{a_\e(g_\e)^{\frac{3}{2}}}{\sqrt{3}}J_g^\e\right) \partial_x \xi \,d(x,t),\\[5pt]
\label{T3}
\begin{split}
&\displaystyle{\int_0^T}  \langle \partial_t \Gamma_\e(t), \xi(t) \rangle_{W^3_1(0,L)} \,dt = \int_{\Omega_T}  \left(\frac{\sqrt{3}}{2}\tau_\e(\Gamma_\e) \sqrt{a_\e(f_\e)}J_{f,g}^\e\right) \partial_x \xi  \,d(x,t)\\[5pt]
&\qquad\qquad\qquad+\int_{\Omega_T}\left(\frac{\sqrt{3}}{2}\tau_\e(\Gamma_\e) \sqrt{a_\e(g_\e)}J_{g}^\e+\frac{1}{4}\tau_\e(\Gamma_\e) a_\e(g_\e) \partial_x \sigma_\e(\Gamma_\e)-D\partial_x\Gamma_\e\right) \partial_x \xi  \,d(x,t),
  \end{split}
\end{align}
for all $\xi \in C^\infty(\overline{\Omega}_T)$.
Furthermore\footnote{where $\Gamma_\e(0,\cdot)=\Gamma_0$ holds almost everywhere.},
\begin{align}\label{inC}
(f_\e(0,\cdot), g_\e(0,\cdot), \Gamma_\e(0,\cdot))=(f^0,g^0,\Gamma^0)
\end{align}
and the mass of the fluids and the surfactant concentration is preserved
\begin{equation}\label{mass}
\displaystyle{\int_0^L}f_\e(t)\, dx = \|f^0\|_{L_1(0,L)},\qquad \displaystyle{\int_0^L}g_\e(t)\, dx = \|g^0\|_{L_1(0,L)},\qquad \displaystyle{\int_0^L}\Gamma_\e(t)\, dx = \|\Gamma^0\|_{L_1(0,L)}
\end{equation}
for almost all $t\geq 0$.
Moreover, there holds the energy inequality
\begin{align}\label{energy}
\mathcal{E}(f_\e,g_\e,\Gamma_\e)(T) + \mathcal{D}_\e(f_\e,g_\e,\Gamma_\e)(T) \leq \mathcal{E}(f^0,g^0,\Gamma^0)
\end{align}
for almost all $T\geq 0$, where
\begin{align*}
 \mathcal{D}_\e(f_\e,g_\e,\Gamma_\e)(T):= &-\int_{\Omega_T} \left\{ |J_{f}^\e|^2 +\mu |J_g^\e|^2+\frac{ a_\e(f_\e)\mu^2}{4}\left[S a_\e(g_\e) \partial_x^3 (f_\e+g_\e)-\partial_x \sigma_\e(\Gamma_\e) \right]^2\right.\\[5pt]
&\qquad  +\frac{ a_\e(g_\e) \mu}{4}|\partial_x\sigma_\e(\Gamma_\e)|^2+\mu \Phi^{\prime\prime}(\Gamma_\e)D|\partial_x \Gamma_\e|^2 \Bigg\}\,d(x,t).
 \end{align*}
\end{thm}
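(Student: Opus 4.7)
The plan is to construct $(f_\e,g_\e,\Gamma_\e)$ by a Faedo--Galerkin approximation followed by a compactness passage to the limit. I would fix the orthonormal $L_2(0,L)$-basis $\{w_k\}_{k\geq 0}$ of eigenfunctions of $-\partial_x^2$ with homogeneous Neumann boundary conditions, explicitly $w_0=1/\sqrt{L}$ and $w_k(x)=\sqrt{2/L}\cos(k\pi x/L)$ for $k\geq 1$, set $V_n:=\mathrm{span}\{w_0,\dots,w_n\}$ and let $P_n$ denote the $L_2$-orthogonal projection onto $V_n$. Each $w_k$ is smooth and satisfies $\partial_x w_k=\partial_x^3 w_k=0$ at $x=0,L$; moreover $V_n$ is invariant under $\partial_x^2$, which will be essential for the energy estimate. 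Seek $(f_\e^n,g_\e^n,\Gamma_\e^n)$ with components in $V_n$ solving the Galerkin projection of \eqref{rsystem} with initial data $(P_nf^0,P_ng^0,P_n\Gamma^0)$. Because $a_\e\geq\e>0$, $\tau_\e$ has compact support, $\sigma_\e^\prime$ is bounded, and all spatial derivatives of $V_n$-elements are smooth, the resulting ODE system for the Fourier coefficients has locally Lipschitz right-hand side, so Picard--Lindel\"of gives a maximal solution on $[0,T_n)$.

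The heart of the argument is the energy estimate at the Galerkin level. Since $\partial_x^2 V_n\subset V_n$, the functions $-R\partial_x^2 f_\e^n-S\mu\partial_x^2(f_\e^n+g_\e^n)$ and $-S\mu\partial_x^2(f_\e^n+g_\e^n)$ are admissible test functions in the first and second Galerkin equations, while the third is tested with $\mu P_n[\Phi^\prime(\Gamma_\e^n)]$; exploiting $\partial_t\Gamma_\e^n\in V_n$ eliminates the projection on the time-derivative side, so the three left-hand sides sum to $\tfrac{d}{dt}\mathcal{E}(f_\e^n,g_\e^n,\Gamma_\e^n)$. On the right, after integration by parts (all boundary contributions vanishing thanks to \eqref{wNB} and the structure of $w_k$) and algebraic regrouping guided by \eqref{Jf}--\eqref{Jg}, the emerging quadratic form in $(J_f^\e,J_g^\e,\partial_x\sigma_\e(\Gamma_\e^n),\partial_x\Gamma_\e^n)$ reassembles into the nonnegative functional $\mathcal{D}_\e$, up to a projection-error term $\langle (I-P_n)\Phi^\prime(\Gamma_\e^n),\cdot\rangle$ that is controlled via assumption A3) and the $H^1$-convergence $P_n\to\Id$.

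From this energy inequality and assumption A2), which gives $\Phi(s)\geq\tfrac{c_\Phi}{2}(s-1)^2$, one reads off uniform bounds on $(f_\e^n,g_\e^n)$ in $L_\infty(0,T;H^1(0,L))$, on $\Gamma_\e^n$ in $L_\infty(0,T;L_2(0,L))$, and on $(J_f^\e,J_g^\e,\partial_x\Gamma_\e^n)$ in $L_2(\Omega_T)$. Inverting the linear system for $\partial_x^3 f_\e^n,\partial_x^3 g_\e^n$ encoded in \eqref{Jf}--\eqref{Jg}, whose $2\times 2$ coefficient matrix has determinant $R>0$, and using $a_\e\geq\e$, one obtains $\e$-dependent $L_2(0,T;H^3(0,L))$ bounds on $f_\e^n,g_\e^n$ and, directly from the Galerkin formulation, $L_2(0,T;(H^1)^\prime)$ bounds on the time derivatives. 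These extend the Galerkin solution to all of $[0,T]$. By Aubin--Lions with the compact embeddings $H^3\hookrightarrow\hookrightarrow H^1\hookrightarrow\hookrightarrow C^\alpha$ for $\alpha<1/2$, extract a subsequence along which $f_\e^n,g_\e^n\to f_\e,g_\e$ strongly in $C([0,T];C^\alpha([0,L]))$ and weakly in $L_2(0,T;H^3)$, and $\Gamma_\e^n\to\Gamma_\e$ strongly in $L_2(\Omega_T)$ and weakly in $L_2(0,T;H^1)$. Continuity and boundedness of $a_\e,\tau_\e,\sigma_\e^\prime$ let every nonlinear term pass to the limit in \eqref{T1}--\eqref{T3}; mass conservation \eqref{mass} follows by testing with $w_0$, the initial-data identification \eqref{inC} from time-continuity together with $P_n\to\Id$, and \eqref{energy} from weak lower semicontinuity of $\mathcal{D}_\e$.

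The main obstacle will be the energy step: verifying that the three test products reassemble \emph{exactly} into the nonnegative dissipation $\mathcal{D}_\e$, and simultaneously controlling the projection error generated by $\mu P_n[\Phi^\prime(\Gamma_\e^n)]$. The algebraic matching relies crucially on the precise coefficients in \eqref{rrsystem}, which force the sum of all mixed capillary--Marangoni cross-terms from the three equations to collapse into the positive-definite quadratic form visible in $\mathcal{D}_\e$; the projection error only becomes tractable because assumption A3) bounds $\Phi^{\prime\prime}$ by a sublinear power of $|\Gamma|$, which combined with the $L_2$-strong compactness of $\Gamma_\e^n$ obtained in Step 3 forces the error to vanish as $n\to\infty$.
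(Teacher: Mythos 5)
Your overall strategy (Galerkin in the cosine Neumann basis, energy estimate, $\e$-dependent $H^3$ bounds via invertibility of the system for the third derivatives, Aubin--Lions/Simon compactness, identification of limits) is the same as the paper's. The genuine problem is the one place where you deviate: you take the Galerkin ansatz for $\Gamma_\e^n$ itself and then test the third equation with $\mu P_n[\Phi^\prime(\Gamma_\e^n)]$, accepting a projection error. The time-derivative side is indeed unaffected (since $\partial_t\Gamma_\e^n\in V_n$), but on the flux side the exact algebraic identities that make the dissipation close up, namely $\partial_x\Phi^\prime(\Gamma_\e^n)\,\tau_\e(\Gamma_\e^n)=-\partial_x\sigma_\e(\Gamma_\e^n)$ and $\partial_x\Phi^\prime(\Gamma_\e^n)\,\partial_x\Gamma_\e^n=\Phi^{\prime\prime}(\Gamma_\e^n)|\partial_x\Gamma_\e^n|^2$, are destroyed when $\Phi^\prime(\Gamma_\e^n)$ is replaced by $P_n\Phi^\prime(\Gamma_\e^n)$: you are left with an error of the form $\int_{\Omega_T}H_\Gamma^{\e,n}\,\partial_x\bigl[(I-P_n)\Phi^\prime(\Gamma_\e^n)\bigr]\,d(x,t)$. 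This term cannot be "controlled via A3) and $P_n\to\Id$ in $H^1$": the function $\Phi^\prime(\Gamma_\e^n)$ changes with $n$, so strong convergence of $P_n$ applied to a fixed function is useless, and no uniform $H^1$ bound on $\Phi^\prime(\Gamma_\e^n)$ is available \emph{before} the energy estimate is established. Worse, invoking the $L_2(\Omega_T)$-compactness of $(\Gamma_\e^n)_n$ to make the error vanish is circular, since that compactness is itself a consequence of the uniform bounds the energy estimate is supposed to produce; and the estimate must hold at each \emph{fixed} $n$ with a sign (not merely asymptotically) both to prevent blow-up of the Galerkin ODE at $T_\e^n$ and to yield $n$-uniform bounds.

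The paper's (and the standard) remedy is precisely to avoid this error altogether: by A1)--A2) the map $\Phi^\prime$ has a $C^1$ inverse $W=(\Phi^\prime)^{-1}$, so one takes the Galerkin expansion in the entropy variable $v_\e:=\Phi^\prime(\Gamma_\e)$, i.e. $v_\e^n=\sum_{k=0}^n V_\e^k\phi_k\in V_n$ and $\Gamma_\e^n:=W(v_\e^n)$, with the third equation rewritten for $W(v_\e^n)$. Then $v_\e^n$ is an exactly admissible test function, the energy identity \eqref{energy1} holds without any remainder, and A3) (giving $\Phi^\prime(\Gamma^0)\in L_2$ and $|\Phi^\prime(s)|\lesssim |s|^{r+1}+1$) is what guarantees both the approximation of the initial datum and the $L_p$-bound on $v_\e^n$ needed, via equivalence of norms on $V_n$, to conclude global existence of the Galerkin approximations. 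With this modification your remaining steps (a-priori bounds, $\e$-dependent $H^3$ control using $R>S\mu$, time-derivative bounds, compactness, passage to the limit, mass conservation via $\phi_0$, lower semicontinuity for \eqref{energy}) go through as in the paper.
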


\subsection{Approximation of a Weak Solution  by Fourier Series Expansions}
 Let $\e\in (0,1]$ be fixed. Following \cite{EM, GW, Wie}, we construct a solution to \eqref{rsystem}, \eqref{wID} and \eqref{wNB} by the method of Galerkin approximations. That is, we are seeking for functions $f^n_\e,g^n_\e,\Gamma^n_\e$, such that the problem is satisfied in a weak sense, when testing against functions from an $n$--dimensional subspace. These solutions are called \emph{Galerkin approximations}. 

Note that the normalized eigenvectors of $-\Delta : H^2(0,L)\longrightarrow L_2(0,L)$, which satisfy zero Neumann--boundary conditions are given by
\[\phi_0:= \sqrt{\displaystyle{\frac{1}{L}}}\qquad\qquad \mbox{and} \qquad \qquad \phi_k:= \sqrt{\displaystyle{\frac{2}{L}}}\cos \left( \displaystyle{\frac{k\pi x}{L}}\right),\; k\geq 1,\]
and form an orthonormal basis in $L_2(0,L)$. It is known that any function $f$ belonging to $H^1(0,L)$ can be written as $\sum_{k=0}^\infty \alpha_k \phi_k$, where the series converges in $H^1(0,L)$ and $\alpha_k:= \left( f\mid \phi_k\right)_2$ for $k\geq 0$ with $(\cdot\mid \cdot)_2$ being the scalar product in $L_2(0,L)$. 
We take a Galerkin ansatz for $f_\e,g_\e$ and $\Phi^\prime(\Gamma_\e)$. In view of Assumption A1), A2), there exists a continuous differentiable inverse function $W:=(\Phi^\prime)^{-1}$. Set $v_\e:= \Phi^\prime (\Gamma_\e)$, then $\Gamma_\e=W(v_\e)$ and
the regularized system \eqref{rsystem} becomes
\begin{align*} 
&\partial_t f_\e +\partial_x \left[ a_\e(f_\e)\left(\displaystyle{\frac{Ra_\e(f_\e)^2}{3}}\partial_x^3 f_\e + S\mu \left(\displaystyle{\frac{a_\e(f_\e)^2}{3}} +\displaystyle{\frac{a_\e(f_\e)a_\e(g_\e)}{2}} \right) \partial_x^3(f_\e+g_\e)\right.\right.\\[5pt]
&\left.\left.\qquad\qquad\qquad\qquad-\mu 	\displaystyle{\frac{a_\e(f_\e)}{2}}\tau_\e(W(v_\e))\partial_x v_\e\right)\right]=0,\\[5pt]
& \partial_t g_\e + \partial_x \left[a_\e(g_\e)\left( \displaystyle{\frac{Ra_\e(f_\e)^2}{2}} \partial_x^3 f_\e +S\left(  \displaystyle{\frac{a_\e(g_\e)^2}{3}} +\mu \left( \displaystyle{\frac{a_\e(f_\e)^2}{2}}+a_\e(f_\e)a_\e(g_\e)\right)\right)\partial_x^3 (f_\e+g_\e) \right.\right.\\[5pt]
&\left.\left.\qquad\qquad\qquad\qquad -\left(\mu a_\e(f_\e)+ \displaystyle{\frac{a_\e(g_\e)}{2}} \right) \tau_\e(W(v_\e))\partial_x v_\e\right)\right]=0, \\[5pt]
&\partial_t W(v_\e)+\partial_x\left[\tau_\e(W(v_\e))\left( \displaystyle{\frac{R a_\e(f_\e)^2}{2}} \partial_x^3 f_\e +\left( S \displaystyle{\frac{a_\e(g_\e)^2}{2}}+S\mu\left(\displaystyle{\frac{a_\e(f_\e)^2}{2}}+a_\e(f_\e)a_\e(g_\e)\right)\right)\partial_x^3(f_\e+g_\e)\right. \right.\\[5pt]
&\qquad\qquad\qquad\qquad - \left(\mu a_\e(f_\e)+a_\e(g_\e)\right)\tau_\e(W(v_\e)) \partial_x v_\e\Big)- D\partial_x W(v_\e)\Big]=0,
\end{align*}
in view of $\partial_x \sigma_\e(W(v))=-\tau_\e(W(v))\partial_x v$.
Observe that Assumption A3) and $\Gamma^0\in L_{2(r+1)}(0,L)$ imply that 
$\Phi^\prime(\Gamma^0)\in L_2(0,L)$. 
For $f^0, g^0 \in H^1(0,L)$ and $v^0:=\Phi^\prime(\Gamma^0)\in L_2(0,L)$  there exist 
sequences $(f_{0k})_{k\in \N},(g_{0k})_{k\in \N}$ and $(v_{0k})_{k\in \N}$, such that
\begin{align*}
f_{0}^n:=\displaystyle{\sum_{k=0}^nf_{0k}\phi_k} \qquad \mbox{with}\qquad f_{0}^n \longrightarrow f^0 \quad \mbox{in}\; H^1(0,L),\\
g_{0}^n:=\displaystyle{\sum_{k=0}^ng_{0k}\phi_k} \qquad \mbox{with}\qquad g_{0}^n \longrightarrow g^0 \quad \mbox{in}\; H^1(0,L),\\
v_{0}^n:=\displaystyle{\sum_{k=0}^nv_{0k}\phi_k} \qquad \mbox{with}\qquad v_{0}^n \longrightarrow v^0 \quad \mbox{in}\; L_2(0,L).
\end{align*}
We seek for continuously differentiable functions with respect to time
\[f_\e^n(t,x):= \displaystyle{\sum_{k=0}^n}F_\e^k(t)\phi_k(x),\quad g_\e^n(t,x):= \displaystyle{\sum_{k=0}^n}G_\e^k(t)\phi_k(x),\quad v_\e^n(t,x):= \displaystyle{\sum_{k=0}^n}V_\e^k(t)\phi_k(x)\quad\mbox{in}\quad \Omega_T,\]
which solve \eqref{rsystem} when testing with functions from the linear subspace spanned by $\{\phi_0,\ldots,\phi_n\}$ and satisfy initially
\[f_\e^n(0,\cdot)=f_0^n,\qquad g_\e^n(0,\cdot)=g_0^n,\qquad v_\e^n(0,\cdot)=v_0^n.\]
Set $\Gamma_\e^n:= W(v_\e^n)$.
By construction the functions $f_\e^n,g_\e^n, \Gamma_\e^n$ satisfy the boundary condition \eqref{wNB}. 
\begin{lem}\label{gan} Let $\e\in (0,1]$ be fixed and $T>0$. Then, the problem \eqref{rsystem}, \eqref{wID}, \eqref{wNB} admits for every $n\in\N$ a unique global Galerkin approximation $(f_\e^n,g_\e^n,\Gamma^n_\e)$.
Furthermore, conservation of mass
\begin{equation}\label{ccc}
\displaystyle{\int_0^L}f_\e^n(t)\, dx = \|f^0\|_1,\qquad \displaystyle{\int_0^L}g_\e^n(t)\, dx = \|g^0\|_1,\qquad \displaystyle{\int_0^L}\Gamma_\e^n(t)\, dx = \|\Gamma^0\|_1
\end{equation}
holds true 
for all $t\geq 0$ and the energy equality
\begin{align}\label{energy1}
\mathcal{E}(f_\e^n,g_\e^n,\Gamma_\e^n)(T) + \mathcal{D}_\e(f_\e^n,g_\e^n,\Gamma_\e^n)(T)=  \mathcal{E}(f_0^n,g_0^n,\Gamma_0^n)
\end{align}
is satisfied.
\end{lem}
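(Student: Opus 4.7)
I would reduce the regularised PDE system to a finite-dimensional ODE via the Galerkin ansatz (working in the $v$-variable), obtain local existence and uniqueness by Picard-Lindel\"of, prove mass conservation and the energy identity by testing against specific elements of the Galerkin subspace, and finally extend globally using the resulting a~priori bounds. The key structural fact used throughout is that the basis functions $\phi_k$ are eigenfunctions of $-\partial_x^2$ with Neumann boundary conditions, so $-\partial_x^2$ preserves the Galerkin subspace $\mathrm{span}\{\phi_0,\ldots,\phi_n\}$ and boundary contributions vanish in the integrations by parts.

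For the ODE reduction I exploit Assumption A2): $W:=(\Phi')^{-1}$ is a $C^1$ increasing bijection with $W'=1/(\Phi''\circ W)\in(0,1/c_\Phi]$, and $\Gamma_\e^n=W(v_\e^n)$. Substituting the ansatz into the weak formulation and testing against $\phi_0,\ldots,\phi_n$ converts the problem into a system of ODEs for the $3(n+1)$ coefficients $(F_\e^k,G_\e^k,V_\e^k)$. The time derivative in the $v$-equation produces a symmetric mass matrix $M_{jk}=\int_0^L W'(v_\e^n)\phi_j\phi_k\,dx$, which is positive definite because $W'>0$, with smallest eigenvalue bounded below on every bounded coefficient set (by equivalence of norms in finite dimensions, $v_\e^n$ remains pointwise bounded, hence $\Phi''(\Gamma_\e^n)$ stays bounded above). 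Since $a_\e,\sigma_\e',\tau_\e,W$ are locally Lipschitz, the resulting RHS is locally Lipschitz after inverting $M$, and Picard-Lindel\"of yields a unique maximal solution on some $[0,T_\ast)$. Conservation of mass \eqref{ccc} is then immediate by testing each equation in \eqref{rsystem} with $\phi_0=\sqrt{1/L}$: every flux contribution vanishes because $\partial_x\phi_0=0$ and the equations are in divergence form.

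The heart of the argument is the energy identity. I would test the $\partial_t f_\e^n$ equation with $-R\partial_x^2 f_\e^n-S\mu\partial_x^2(f_\e^n+g_\e^n)$, the $\partial_t g_\e^n$ equation with $-S\mu\partial_x^2(f_\e^n+g_\e^n)$, and the $v$-equation with $\mu v_\e^n$; all three test functions lie in the Galerkin subspace by the eigenfunction observation above and are therefore admissible. The time-derivative terms reassemble into $\frac{d}{dt}\mathcal{E}(f_\e^n,g_\e^n,\Gamma_\e^n)$, with the surfactant contribution coming from $\Phi'(\Gamma_\e^n)\partial_t\Gamma_\e^n=v_\e^n\partial_t W(v_\e^n)$. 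After one integration by parts, the flux terms are regrouped into precisely the sum of squares defining $\mathcal{D}_\e$; here the identity $\partial_x\sigma_\e(\Gamma_\e^n)=-\tau_\e(\Gamma_\e^n)\partial_x v_\e^n$ is essential, as it is what bridges the $v$-formulation with the $\mu\Phi''(\Gamma_\e^n)D|\partial_x\Gamma_\e^n|^2$ diffusion term. Integrating in time yields \eqref{energy1}. I expect this algebraic regrouping of the flux terms into sums of squares to be the main obstacle: it is the very reason for rewriting the system in the compact form \eqref{rrsystem} with $J_f^\e,J_{f,g}^\e,J_g^\e$, which already displays the squares that must appear.

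Finally, global existence follows by combining \eqref{energy1} and \eqref{ccc}: together with Poincar\'e's inequality they control $\|f_\e^n\|_{H^1},\|g_\e^n\|_{H^1},\|v_\e^n\|_{L_2}$ uniformly on $[0,T_\ast)$, and by equivalence of norms in the finite-dimensional Galerkin subspace the coefficient vector stays bounded. Hence $T_\ast=+\infty$ and the solution exists on $[0,T]$ for every $T>0$.
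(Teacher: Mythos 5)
Your proposal follows essentially the same route as the paper's proof: Galerkin reduction to an ODE system in the $v=\Phi'(\Gamma)$ variable with Picard--Lindel\"of, mass conservation by testing with the constant mode, the energy identity obtained by testing with $-R\partial_x^2 f_\e^n-S\mu\partial_x^2(f_\e^n+g_\e^n)$, $-S\mu\partial_x^2(f_\e^n+g_\e^n)$ and $\mu v_\e^n$ together with $\partial_x\sigma_\e(\Gamma_\e^n)=-\tau_\e(\Gamma_\e^n)\partial_x v_\e^n$, and global existence via the resulting bounds and equivalence of norms on the finite-dimensional subspace; your explicit inversion of the positive-definite mass matrix coming from $\partial_t W(v_\e^n)$ is in fact a more careful treatment of a point the paper glosses over. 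The only slip is the claim that the energy controls $\|v_\e^n\|_{L_2}$: via Assumption A3) the $L_\infty(0,T;L_2)$ bound on $\Gamma_\e^n$ only yields $v_\e^n\in L_\infty(0,T;L_{2/(r+1)})$ (the paper's choice), which is harmless here since any $L_p$ bound suffices for the finite-dimensional coefficient bound.
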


\begin{proof} We test the equations in \eqref{rsystem} successively with $\phi_0,\ldots ,\phi_n$ and integrate by parts. Due to the boundary conditions and the special structure of the equations in \eqref{rsystem}, the boundary terms vanish and we obtain a system of ordinary differential equations, which can be solved locally by the Picard--Lindelöf Theorem. Testing \eqref{rsystem} against $\phi_j$ for some $j\in \{0,\ldots, n\}$ yields
\begin{align}
\label{F}
\begin{split}
&\left(\partial_t f_\e^n \mid \phi_j\right)_2 = \left(a_\e(f_\e^n)\left(  R \displaystyle{\frac{a_\e(f_\e^n)^2}{3}} \partial_x^3 f_\e^n + S\mu \left(\displaystyle{\frac{a_\e(f_\e^n)^2}{3}} +\displaystyle{\frac{a_\e(f_\e^n)a_\e(g_\e^n)}{2}} \right)\partial_x^3 (f_\e^n+g_\e^n) \right.\right. \\[5pt] 
&\left.\hspace{2cm} \qquad-\mu \displaystyle{\frac{a_\e(f_\e^n)}{2}}\tau_\e(W(v_\e^n))\partial_x v_\e^n \Big) \,\Big|\, \partial_x \phi_j \right)_2, 
\end{split}\\[10pt]
\label{G}
\begin{split}
&\left(\partial_t g_\e^n \mid\phi_j\right)_2 = \left(a_\e(g_\e^n)\left( R\displaystyle{\frac{a_\e(f_\e^n)^2}{2}}\partial_x^3 f_\e^n +\left( S \displaystyle{\frac{a_\e(g_\e^n)^2}{3}} +S\mu \left( \displaystyle{\frac{a_\e(f_\e^n)^2}{2}}+a_\e(f_\e^n)a_\e(g_\e^n)\right)\right) \right. \right.\\[5pt]
 &\hspace{2cm}\left.\qquad \left. \times \partial_x^3 (f_\e^n+g_\e^n)+\left(\mu a_\e(f_\e^n) -\displaystyle{\frac{a_\e(g_\e^n)}{2}} \right) \tau_\e(W(v_\e^n))\partial_x v_\e^n\right)\,\Big|\, \partial_x \phi_j \right)_2,
 \end{split}\\[10pt]
 \label{Gamma}
 \begin{split}
 &\left(\partial_t W(v^n_\e ) \mid \phi_j\right)_2 = \left(\tau_\e(W(v^n_\e))\left( R \displaystyle{\frac{a_\e(f_\e^n)^2}{2}} \partial_x^3 f_\e^n +\left( S \displaystyle{\frac{a_\e(g_\e^n)^2}{2}}+S\mu\left(\displaystyle{\frac{a_\e(f_\e^n)^2}{2}}+a_\e(f_\e^n)a_\e(g_\e^n)\right)\right) \right. \right.\\[5pt]
  &\hspace{1cm}\times \partial_x^3 (f_\e^n+g_\e^n)- \left(\mu a_\e(f_\e^n)+a_\e(g_\e^n)\right)\tau_\e(W(v_\e)) \partial_x v^n_\e\Big)- D\partial_x W(v^n_\e) \,\Big|\, \partial_x \phi_j \Big)_2.
  \end{split}
\end{align}
Define $\Psi:=(\Psi_1,\Psi_2,\Psi_3):\R^{3(n+1)}\longrightarrow \R^{3(n+1)}$ by
\begin{align*}
&\Psi_{1,j}(p,q,r):=  \displaystyle{\sum_{k=1}^n p^k}\left( R \displaystyle{\frac{a_\e(\Theta_f(p))^3}{3}}\partial_x^3 \phi_k \Big| \partial_x \phi_j \right)_2 \\[10pt]
 &\quad+ \displaystyle{\sum_{k=1}^n(p^k+q^k )}\left( S\mu \left(\displaystyle{\frac{a_\e(\Theta_f(p))^3}{3}} +\displaystyle{\frac{a_\e(\Theta_f(p))^2a_\e(\Theta_g(q))}{2}} \right) \partial_x^3 \phi_k \,\Big|\, \partial_x \phi_j \right)_2 \\[10pt]
&\quad-\displaystyle{\sum_{k=1}^nr^k}\left( \mu \displaystyle{\frac{a_\e(\Theta_f(p))^2}{2}}\tau_\e(W(\Theta_v(r)))\partial_x \phi_k \,\Big|\, \partial_x \phi_j \right)_2,\\[10pt]
&\Psi_{2,j}(p,q,r):= \displaystyle{\sum_{k=1}^n p^k}\left( R\displaystyle{\frac{a_\e(\Theta_f(p))^2a_\e(\Theta_g(q))}{2}}\partial_x^3 \phi_k \,\Big|\, \partial_x \phi_j \right)_2+\displaystyle{\sum_{k=1}^n (p^k+q^k)}\left( \left(\displaystyle{\frac{ S a_\e(\Theta_g(q))^3}{3}}\right.\right.  \\[10pt]
&\qquad\qquad\qquad\left.\left. +S\mu \left( \displaystyle{\frac{a_\e(\Theta_f(p))^2a_\e(\Theta_g(q))}{2}}+a_\e(\Theta_f(p))a_\e(\Theta_g(q))^2\right)\right)\partial_x^3 \phi_k \,\Big|\, \partial_x \phi_j \right)_2\\[10pt]
&\quad-\displaystyle{\sum_{k=1}^n r^k}\left( \left(\mu a_\e(\Theta_f(p))a_\e(\Theta_g(q)) +\displaystyle{\frac{a_\e(\Theta_g(q))^2}{2}} \right)\tau_\e(W(\Theta_v(r))) \partial_x \phi_k \,\Big|\, \partial_x \phi_j \right)_2
\end{align*}
and
\begin{align*}
&\Psi_{3,j}(p,q,r):= \displaystyle{\sum_{k=1}^n p^k}\left( R \displaystyle{\frac{a_\e(\Theta_f(p))^2}{2}}W(\Theta_v(r)) \partial_x^3 \phi_k \,\Big|\, \partial_x \phi_j \right)_2+ \displaystyle{\sum_{k=1}^n (p^k+q^k)}\left(\left( \displaystyle{\frac{ Sa_\e(\Theta_f(p))^2}{2}}\right.\right.\\[10pt]
&\qquad\qquad\qquad\left.\left.+S\left(\displaystyle{\frac{a_\e(\Theta_f(q))^2}{2}}+a_\e(\Theta_f(p))a_\e(\Theta_g(q))\right)\right) W(\Theta_v(r)) \partial_x^3 \phi_k\,\Big|\,\partial_x \phi_j \right)_2\\[10pt]
&\quad- \displaystyle{\sum_{k=1}^n r^k}\left( \left(\mu a_\e(\Theta_f(p))+a_\e(\Theta_g(q))\right)(\tau_\e(W(\Theta_v(r))))^2 \partial_x \phi_k+ DW^\prime(\Theta_v(r)) \partial_x \phi_k \,\Big|\, \partial_x \phi_j \right)_2,
\end{align*}
for $j\in\{0,\ldots,n\}$,  $p=(p^0,\ldots,p^n)$, $q=(q^0,\ldots,q^n)$ and $r=(r^0,\ldots, r^n)$ being elements in $\R^n$, and
\[\Theta_f(p):=  \displaystyle{\sum_{k=0}^n}p^k\phi_k,\qquad \Theta_g(q):=   \displaystyle{\sum_{k=0}^n}q^k\phi_k,\qquad \Theta_v(r):= \displaystyle{\sum_{k=0}^n}r^k\phi_k.\]
For $(F,G,V):=(F_\e^0,\ldots, F_\e^n, G_\e^0,\ldots, G_\e^n,V^0_\e,\ldots, V^n_\e)$ the function $\Psi(F,G,V)$ represents the right-hand side of \eqref{F}--\eqref{Gamma}.
Note that the left-hand side of \eqref{F} satisfies
\[\left(\partial_t f_\e^n\mid \phi_j\right)_2 = \left(\partial_t \Theta_f(F)\mid \phi_j\right)_2 = \partial_t F_\e^j\quad \mbox{for all}\quad j\in\{0,\ldots,n\}.\]
The analog relation holds true for the left-hand sides in \eqref{G} and \eqref{Gamma}. We obtain the ordinary differential equation
\begin{equation}\label{ode}
\displaystyle{\frac{d}{dt}}(F,G,V) = \Psi(F,G,V),\qquad (F,G,V)(0)=(f_{00},\ldots f_{0n}, g_{00},\ldots g_{0n}, v_{00},\ldots v_{0n}).
\end{equation}
The function
$\Psi=(\Psi_1,\Psi_2,\Psi_3):\R^{3(n+1)}\rightarrow \R^{3(n+1)}$ is locally Lipschitz continuous, for $a_\e$ as well as $\tau_\e$ have this property. Thus, problem \eqref{ode} admits a unique local solution $(F,G,V)\in \left( C^1([0,T_\e^n),\R^n)\right)^3$, where $[0,T_\e^n)$ is the maximal time interval of existence.\footnote{We deduce in particular that $F_\e^0 = f_{00},G_\e^0 = g_{00},B_\e^0 = g_{00} $ are independent of time since $\Psi_{i,0}=0$ for $i=1,2,3$.} Hence,
\begin{align*}
f_\e^n, g_\e^n \in  C^1([0,T_\e^n);C^\infty([0,L])),\qquad
\Gamma_\e^n \in  C^1([0,T_\e^n);C^1([0,L])) 
\end{align*}
 is a local Galerkin approximation of \eqref{rsystem}. 
In order to prove that the solution is global in time for every $n\in\N$, we use that the functional
\[\mathcal{E}(f_\e^n, g_\e^n , \Gamma_\e^n)= \displaystyle{\int_0^L} \left\{ \displaystyle{\frac{1}{2}}\left(R |\partial_x f_\e^n |^2+ S \mu |\partial_x (f_{\e}^n+g_\e^n) |^2\right)+\mu \Phi(\Gamma^n_\e)\right\}\,dx\]
decreases along the solution $(f_\e^n,g_\e^n,\Gamma_\e^n)$ of \eqref{rsystem}. The time derivative of $\mathcal{E}(f_\e^n, g_\e^n , \Gamma_\e^n)$ yields
\begin{align*}
\displaystyle{\frac{d}{dt}} \mathcal{E}(f_\e^n, g_\e^n , \Gamma^n_\e)&= \displaystyle{\frac{d}{dt}}\displaystyle{\int_0^L} \left\{ \displaystyle{\frac{1}{2}}\left(R |\partial_x f_\e^n |^2+ S \mu |\partial_x (f_{\e}^n+g_\e^n) |^2\right)+\mu \Phi(\Gamma^n_\e)\right\}\,dx \\[5pt]
&= \displaystyle{\int_0^L} \left\{ R \partial_x f_\e^n \partial_x\partial_t f_\e^n +S\mu\partial_x(f_\e^n+g_\e^n)\partial_x\partial_t(f_\e^n+g_\e^n) + \mu \Phi^\prime(\Gamma_\e^n)\partial_t \Gamma_\e^n\right\}\,dx \\[5pt]
&= - \displaystyle{\int_0^L}\left\{ R \partial_x^2 f_\e^n\partial_t f_\e^n +S\mu \partial_x^2(f_\e^n+g_\e^n)\partial_t(f_\e^n+g_\e^n)  - \mu v^n_\e\partial_t \Gamma_\e^n\right\}\,dx. 
\end{align*}
Since $\partial_x^2 f_\e^n(t)$, $\partial_x g_\e^n(t)$ as well as  $ v^n_\e(t)= \Phi^\prime(\Gamma_\e^n(t))$ belong to $ \spa\{\phi_0,\ldots,\phi_n\}$ for all $t\in [0,T_\e^n)$, we use them as test functions for the equations in \eqref{rsystem} and obtain that

\allowdisplaybreaks
\begin{align*}
&\displaystyle{\frac{d}{dt}}\displaystyle{\int_0^L} \left\{ \displaystyle{\frac{1}{2}}\left(R |\partial_x f_\e^n |^2+ S \mu |\partial_x (f_{\e}^n+g_\e^n) |^2\right)+\mu \Phi(\Gamma^n_\e)\right\}\,dx \\[5pt]
&\qquad=-\displaystyle{\int_0^L} \left\{ R \partial_x^3 f_\e^n \left[ R \displaystyle{\frac{a_\e(f_\e^n)^3}{3}} \partial_x^3 f_\e^n + S_\mu \left(\displaystyle{\frac{a_\e(f_\e^n)^3}{3}} +\displaystyle{\frac{a_\e(f_\e^n)^2a_\e(g_\e^n)}{2}} \right) \partial_x^3(f_\e^n+g_\e^n) \right.\right. \\[5pt]
& \left.\left. \qquad\qquad\qquad + \mu \displaystyle{\frac{a_\e(f_\e^n)^2}{2}}\partial_x \sigma_\e(W(v_\e^n))\right] \right\}\,dx \\[5pt]
& \qquad\quad-\displaystyle{\int_0^L} \left\{ S\mu \partial_x^3 (f_\e^n+g_\e^n) \left[ R \displaystyle{\frac{a_\e(f_\e^n)^3}{3}} \partial_x^3 f_\e^n + S\mu \left(\displaystyle{\frac{a_\e(f_\e^n)^3}{3}} +\displaystyle{\frac{a_\e(f_\e^n)^2a_\e(g_\e^n)}{2}} \right) \partial_x^3(f_\e^n+g_\e^n) \right.\right. \\[5pt]
& \qquad\qquad\qquad + \mu \displaystyle{\frac{a_\e(f_\e^n)^2}{2}}\partial_x \sigma_\e(W(v_\e^n)) +R\displaystyle{\frac{a_\e(f_\e^n)^2a_\e(g_\e^n)}{2}}\partial_x^3 f_\e^n \\[5pt]
& \qquad\qquad\qquad   +\left( S \displaystyle{\frac{a_\e(g_\e^n)^3}{3}} +S\mu \left( \displaystyle{\frac{a_\e(f_\e^n)^2a_\e(g_\e^n)}{2}}+a_\e(f_\e^n)a_\e(g_\e^n)^2\right)\right) \partial_x^3 (f_\e^n+g_\e^n) \\[5pt]
& \left.\left.\qquad\qquad\qquad +\left(\mu a_\e(f_\e^n)a_\e(g_\e^n) +\displaystyle{\frac{a_\e(g_\e^n)^2}{2}} \right) \partial_x \sigma_\e(W(v_\e^n)) \right]\right\}\, dx \\[5pt] 
& \qquad\quad-\displaystyle{\int_0^L}\left\{ \mu  \left[\left(  S\displaystyle{\frac{a_\e(g_\e^n)^2}{2}}+\mu\left(\displaystyle{\frac{a_\e(f_\e^n)^2}{2}}+a_\e(f_\e^n)a_\e(g_\e^n)\right)\right)\partial_x \sigma_\e(W(v_\e^n)) \partial_x^3 (f_\e^n+g_\e^n)\right. \right.\\[5pt]
&\qquad\qquad\qquad +R \displaystyle{\frac{a_\e(f_\e^n)^2}{2}}\partial_x \sigma_\e(W(v_\e^n)) \partial_x^3 f_\e^n+ \left(\mu a_\e(f_\e^n)+a_\e(g_\e^n)\right)|\partial_x \sigma_\e(W(v_\e^n))|^2 \\[5pt]
& \left.\left. \qquad\qquad\qquad\qquad\qquad +D \Phi^{\prime\prime}(\Gamma_\e^n)|\partial_x\Gamma_\e^n|^2 \right] \right\} \,dx,
\end{align*} 
where we used the relation $\partial_x v_\e^n \partial_x \Gamma_\e^n=\Phi^{\prime\prime}(\Gamma_\e^n)|\partial_x\Gamma_\e^n|^2 $  and that Assumption A1) implies 
\[\partial_xv_\e^n \tau_\e(\Gamma_\e^n)=\Phi^{\prime\prime}(\Gamma_\e^n)\partial_x\Gamma_\e^n\Gamma \frac{\sigma_\e^\prime(\Gamma_\e^n)}{\sigma^\prime(\Gamma_\e^n)}= -\partial_x\sigma_\e^\prime(\Gamma_\e^n) \]
in the last integral above.
After a tedious but  straight forward computation we arrive at
\begin{align*}
&\displaystyle{\frac{d}{dt}}\mathcal{E}(f_\e^n,g_\e^n,\Gamma_\e^n)=-\displaystyle{\int_0^L} \left\{ \frac{1}{4}\mu^2 a_\e(f_\e^n)\left[S a_\e(g_\e^n) \partial_x^3 (f_\e^n+g_\e^n)+\partial_x \sigma_\e(\Gamma_\e^n) \right]^2 \right.\\[5pt]
&\quad\quad a_\e(f_\e^n)\left[ \frac{a_\e(f_\e^n)\partial_x^3((R+S\mu) f_\e^n+S\mu g_\e^n)}{\sqrt{3}}+\frac{\sqrt{3}}{2}\mu\left(S a_\e(g_\e^n) \partial_x^3 (f_\e^n+g_\e^n)+\partial_x \sigma_\e(\Gamma_\e^n)\right)\right]^2  \\[5pt]
&\quad\quad+a_\e(g_\e)\mu \left[\frac{S}{\sqrt{3}}a_\e(g_\e)\partial_x^3 (f_\e^n+g_\e^n)+\frac{\sqrt{3 }}{2}\partial_x \sigma_\e(\Gamma_\e^n)\right]^2 +\frac{ a_\e(g_\e^n) \mu}{4}|\partial_x\sigma_\e(\Gamma_\e^n)|^2+\mu D \Phi^{\prime\prime}(\Gamma_\e^n)|\partial_x \Gamma_\e^n|^2 \Bigg\}\,dx.
\end{align*} 
Integrating the above equation with respect to time, yields 
\begin{equation*}  
\mathcal{E}(f_\e^n,g_\e^n,\Gamma_\e^n)(T) + \mathcal{D}_\e(f_\e^n,g_\e^n,\Gamma_\e^n)(T)=  \mathcal{E}(f_0^n,g_0^n,\Gamma_0^n)
\end{equation*}
for all $T\in [0,T_\e^n).$ We deduce that for every $t\in [0,T_\e^n)$, the term $\|\partial_x f_\e^n(t)\|_2^2 = \sum_{k=0}^n  |F_\e^k(t)|^2 \|\partial_x \phi_k\|_2^2$ is bounded by a constant depending  on the initial data, so that $F_\e^k(t)$, and likewise $G_\e^k(t)$, are uniformly bounded for all $t\in[0,T_\e^n)$ and $k\in \{0,\ldots, n\}$.
Furthermore, the energy equality provides the bound of $(\Phi(\Gamma_\e^n))_{n\in \N}$ in $L_\infty(0,T_\e^n;L_1(0,L))$. Using Assumption A2) and $\Phi(1)=\Phi^\prime(1)=0$, we obtain that 
\begin{align*}
\label{phis2} \Phi(s)= \displaystyle{\int_1^s}\displaystyle{\int_1^t}\Phi^{\prime\prime}(u)\,du\,dt \geq \frac{c_\Phi}{2} (s-1)^2\qquad \mbox{for all}\quad s\in \R.
\end{align*}
Hence
\begin{align*}
\displaystyle{\int_0^L}|\Gamma_\e^n(t)|^2\,dx \leq 2\displaystyle{\int_0^L}|\Gamma_\e^n(t)-1|^2+1\,dx \leq \frac{4}{c_\Phi} \displaystyle{\int_0^L}\Phi(\Gamma_\e^n(t))\,dx +2L\leq M^2,
\end{align*}
where $M$ is a constant independent of $t\in [0,T_\e^n)$, $n\in \N$ and $\e\in(0,1]$, which implies that
\begin{equation}\label{ste}
\|\Gamma_\e^n\|_{L_\infty(0,T_\e^n;L_2(0,L))}\leq M,\qquad\mbox{for}\quad n\in\N,\e \in (0,1].
\end{equation}
We show that $v_\e^n$ is uniformly bounded on $[0,T_\e^n)$, which implies the uniform boundedness of  $\Gamma_\e^n$, by $|v_\e^n|\geq |c_\Phi(\Gamma_\e^n-1)|$ (cf. Assumption A2) and $v_\e^n=\Phi^\prime(\Gamma_\e^n)$). 
Invoking Assumption A3), we find that
\[|\Phi^\prime(s)|\leq C_\Phi\left(|s-1|+\left|\frac{|s|^{r+1}}{r+1}-\frac{1}{r+1}\right|\right).\]
 Hence $v_\e^n = \Phi^{\prime}(\Gamma_\e^n)$ is bounded in $L_\infty(0,T_\e^n;L_p(0,L))$ for $p=\frac{2}{r+1}$. 
That is,
\begin{equation*}
\|v_\e^n(t)\|_p^p=\displaystyle{\int_0^L}\left| \displaystyle{\sum_{k=0}^n}V_\e^k(t)\phi_k(x)\right| ^p  \,dx \leq C 
\end{equation*}
for some constant $C>0$, which is independent of $t\in [0,T_\e^n)$, $n\in \N$ and $\e\in(0,1]$.
In view of $\{\phi_k\}_{k\in \N}$ being a Schauder basis in $L_p(0,L)$ and the linear subspace spanned by $\{\phi_0,\cdots,\phi_n\}$ being finite dimensional, the equivalence of norms in finite dimensional spaces implies that
\[\displaystyle{\sum_{k=0}^n}|V_\e^k(t)| ^p\leq c \|v_\e^n(t)\|_p^p \]
for some constant $c>0$. We conclude that $v_\e^n$, and thus $\Gamma_\e^n$, is uniformly bounded on $t\in [0,T_\e^n)$ and the Galerkin approximation $(f_\e^n,g_\e^n,\Gamma_\e^n)$ exists globally.

Furthermore, the mass of each fluid and the surfactant concentration 
is preserved by the Galerkin approximation, which is a consequence of testing the equations in \eqref{rsystem} against the constant function $\phi=1$, integrating by parts and using that  $\partial_x^3 f_\e^n = \partial_x^3 g_\e^n = \partial_x\Gamma_\e^n =0$ at $x=0,L$.
Thus, \eqref{ccc} is satisfied, which completes the proof.
\end{proof}

\subsection{Convergence of the Galerkin Approximations.}
Let $T>0$ be fixed. We show that there exists a weakly converging subsequence of $(f_\e^n,g_\e^n,\Gamma_\e^n)_{n\in \N}$, such that the accumulation point is a weak solution of the regularized problem in the sense of Theorem \ref{T31}. The proof is essentially based on a-priori estimates provided by the energy equality \eqref{energy1} and follows \cite{EM, GW, Wie}.
To proceed, we collect bounds  satisfied by the Galerkin approximation $(f_\e^n,g_\e^n,\Gamma_\e^n)_{n\in \N}$, which are  a consequence of \eqref{energy1} and uniform in $n\in \N$, $\e \in (0,1]$:
\allowdisplaybreaks
\begin{alignat}{2} 
\label{e1} 
& \left\{\partial_x f_\e^n,\partial_x g_\e^n \mid{n\in \N},\e \in (0,1]\right\}  && \qquad\mbox{in} \; L_\infty(0,T; L_2(0,L)), \\[10pt]
\label{e6} 
& \left\{\Phi(\Gamma_\e^n)\mid{n\in \N},\e \in (0,1]\right\} &&\qquad \mbox{in} \; L_\infty(0,T; L_1(0,L)), 
\end{alignat}
\begin{alignat}{2}
\label{e2} 
& \begin{array}{lll} \left\{\sqrt{a_\e(f_\e^n)} \left[\displaystyle{\frac{a_\e(f_\e^n)\partial_x^3(Rf_\e^n+S \mu g_\e^n)}{\sqrt{3}}}\right. \right.\\[10pt]
  \left.\left.\quad\qquad\qquad+\displaystyle{\frac{\sqrt{3}}{2}}\mu\left(S a_\e(g_\e^n)\partial_x^3(f_\e^n+g_\e^n)+\partial_x\sigma_\e(\Gamma_\e^n)\right)\right] \Bigg|{n\in \N},\e \in (0,1]\right\}
  \end{array}
 \; && \mbox{in} \; L_2(\Omega_T),\\[10pt]
\label{e3}
& \left\{ \sqrt{a_\e(f_\e^n)} \left[S a_\e(g_\e^n)\partial_x^3(f_\e^n+g_\e^n)+\partial_x\sigma_\e(\Gamma_\e^n)\right] \Bigg|{n\in \N},\e \in (0,1]\right\} && \mbox{in} \; L_2(\Omega_T),\\[10pt]
\label{e4}
& \left\{\sqrt{a_\e(g_\e^n)}\left[\displaystyle{\frac{S}{\sqrt{3}}}a_\e(g_\e^n)\partial_x^3(f_\e^n+g_\e^n)+\displaystyle{\frac{\sqrt{3}}{2}}\partial_x\sigma_\e(\Gamma_\e^n)\right]\Bigg|{n\in \N},\e \in (0,1]\right\}  \quad &&\mbox{in} \; L_2(\Omega_T),\\[10pt]
\label{e5}
&\left\{\sqrt{a_\e(g_\e^n)}\partial_x \sigma_\e(\Gamma_\e^n)\mid{n\in \N},\e \in (0,1]\right\} && \mbox{in} \; L_2(\Omega_T),\\[10pt]
\label{e7}
&\left\{ \sqrt{\Phi^{\prime\prime}(\Gamma_\e^n)} \partial_x\Gamma_\e^n \mid{n\in \N},\e \in (0,1]\right\}&& \mbox{in} \; L_2(\Omega_T).
\end{alignat}
\vspace{10pt}
Note, that \eqref{e2}--\eqref{e5} also imply  the boundedness of
\begin{alignat}{2}
\label{e21}
& \begin{array}{lll}\left\{\sqrt{a_\e(f_\e^n)} \left[\displaystyle{\frac{a_\e(f_\e^n)\partial_x^3(Rf_\e^n+S \mu g_\e^n)}{\sqrt{3}}}\right. \right.\\[10pt]
  \left.\left.\quad\qquad\qquad+\displaystyle{\frac{2}{\sqrt{3}}}\mu\left(S a_\e(g_\e^n)\partial_x^3(f_\e^n+g_\e^n)+\partial_x\sigma_\e(\Gamma_\e^n)\right)\right] \Bigg|{n\in \N},\e \in (0,1]\right\}
  \end{array}
 \; && \mbox{in} \; L_2(\Omega_T),\\[10pt]
 \label{e23}
& \left\{ a_\e(f_\e^n)^{\frac{3}{2}}\partial_x^3(Rf_\e^n+S \mu g_\e^n)\mid{n\in \N},\e \in (0,1]\right\}\quad && \mbox{in} \; L_2(\Omega_T),\\[10pt]
\label{e41}
& \left\{a_\e(g_\e^n)^{\frac{3}{2}}\partial_x^3(f_\e^n+g_\e^n)\mid{n\in \N},\e \in (0,1]\right\} \quad && \mbox{in} \; L_2(\Omega_T),
\end{alignat}
where \eqref{e21}, \eqref{e23} are a consequence of \eqref{e2}, \eqref{e3} and \eqref{e41} follows from \eqref{e4}, \eqref{e5}.
\begin{lem}\label{GaB} The Galerkin approximation satisfies
\begin{itemize} 
\item[i) ]  $\{f_\e^n, g_\e^n\mid {n\in \N}, \e\in (0,1] \}$ bounded in $ L_\infty(0,T;H^1(0,L))$, where
$\{f_\e^n, g_\e^n\mid {n\in \N} \}$ is additionally bounded in $ L_2(0,T;H^3(0,L))$,
\item[ii) ] $\{\Gamma_\e^n \mid n\in \N,\e\in (0,1]\}$ bounded in $  L_\infty(0,T;L_2(0,L))\cap L_2(0,T;H^1(0,L))$. 
\end{itemize}
\end{lem}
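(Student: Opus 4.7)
The plan is to extract each claimed bound directly from the family of a priori estimates \eqref{e1}--\eqref{e41} that the energy equality \eqref{energy1} already furnishes, supplemented by mass conservation \eqref{ccc}, the $L_\infty(0,T;L_2)$ estimate \eqref{ste} established inside Lemma \ref{gan}, and the pointwise lower bounds on $\Phi''$ and on $a_\e$. I do not expect this lemma to require any new energy computation; it is bookkeeping of what is already in place.

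First I would prove the $(n,\e)$--uniform $L_\infty(0,T;H^1(0,L))$ bound on $f_\e^n$ and $g_\e^n$. Estimate \eqref{e1} gives the required control on $\partial_x f_\e^n$ and $\partial_x g_\e^n$ in $L_\infty(0,T;L_2)$. Mass conservation \eqref{ccc} keeps $\|f_\e^n(t)\|_1=\|f^0\|_1$ and $\|g_\e^n(t)\|_1=\|g^0\|_1$ for every $t$, so a Poincar\'e--Wirtinger inequality on $(0,L)$ lifts the $L_1$ control of the mean together with the $L_2$ control of the spatial derivative to the desired uniform $L_\infty(0,T;L_2)$ bound on the functions themselves.

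The main technical step is the $L_2(0,T;H^3(0,L))$ bound for fixed $\e$. The estimates \eqref{e23} and \eqref{e41} control $a_\e(f_\e^n)^{3/2}\partial_x^3(R f_\e^n+S\mu g_\e^n)$ and $a_\e(g_\e^n)^{3/2}\partial_x^3(f_\e^n+g_\e^n)$ in $L_2(\Omega_T)$. Using $a_\e(f_\e^n),a_\e(g_\e^n)\geq \e$ we divide out the weights and obtain $\e$-dependent but $n$-uniform $L_2(\Omega_T)$ bounds on the two linear combinations $R\partial_x^3 f_\e^n+S\mu \partial_x^3 g_\e^n$ and $\partial_x^3 f_\e^n+\partial_x^3 g_\e^n$. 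The associated $2\times 2$ coefficient matrix has determinant $R-S\mu=\sigma_1^c>0$, which allows us to invert and bound $\partial_x^3 f_\e^n$ and $\partial_x^3 g_\e^n$ separately in $L_2(\Omega_T)$. The remaining pieces of the $H^3$ norm follow from the $L_\infty(0,T;H^1)$ bound already obtained, combined with the one-dimensional interpolation
\[
\|\partial_x^2 u\|_{L_2}^2\leq C\,\|\partial_x u\|_{L_2}\|\partial_x^3 u\|_{L_2}.
\]
The delicate ingredient here is the inversion of the coefficient matrix, which crucially uses the hypothesis $\sigma_1^c,\sigma_2^c>0$ imposed at the outset.

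For part ii), the uniform $L_\infty(0,T;L_2(0,L))$ estimate on $\Gamma_\e^n$ is exactly \eqref{ste}, already derived in Lemma \ref{gan} from the bound \eqref{e6} on $\Phi(\Gamma_\e^n)$ together with Assumption A2). For the $L_2(0,T;H^1)$ bound we invoke \eqref{e7} and A2) once more: since $\Phi''(s)\geq c_\Phi>0$,
\[
\|\partial_x \Gamma_\e^n\|_{L_2(\Omega_T)}^2\leq \frac{1}{c_\Phi}\int_{\Omega_T}\Phi''(\Gamma_\e^n)\,|\partial_x\Gamma_\e^n|^2\,d(x,t),
\]
and the right hand side is uniformly bounded in $n$ and $\e$ by \eqref{e7}. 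Together with the $L_\infty(0,T;L_2)$ control this yields the stated $L_2(0,T;H^1)$ estimate, completing the list of bounds.
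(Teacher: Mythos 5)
Your proposal is correct and follows essentially the same route as the paper: Poincar\'e--Wirtinger plus mass conservation and \eqref{e1} for the uniform $L_\infty(0,T;H^1)$ bound, dividing out the weights $a_\e\geq\e$ in \eqref{e23}, \eqref{e41} and using $R-S\mu=\sigma_1^c>0$ to separate $\partial_x^3 f_\e^n$ and $\partial_x^3 g_\e^n$, and \eqref{ste} together with A2) and \eqref{e7} for part ii). The only cosmetic differences are that you make the $2\times2$ inversion explicit where the paper simply invokes $R>S\mu$, and you control $\partial_x^2$ by interpolation where the paper again cites Poincar\'e--Wirtinger; both are valid.
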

We emphasize that Lemma \ref{GaB} ii) implies the boundedness of
\begin{equation*}
\{\Gamma_\e^n \mid n\in \N,\e\in (0,1]\} \qquad \mbox{in}\quad L_6(\Omega_T),
\end{equation*}
due to \cite[Proposition I.3.2]{DiB}.
\begin{proof}[Proof of Lemma \ref{GaB}]\textbf{}
$i)$ We know from \eqref{e1} that $\{\partial_x f_\e^n,\partial_x g_\e^n\mid n\in \N,\e\in (0,1]\}$ is bounded in $ L_\infty(0,T; L_2(0,L))$. The Poincar\'{e}--Wirtinger Theorem and conservation of mass \eqref{ccc} imply then that
\begin{equation}\label{11}
\{f_\e^n, g_\e^n\mid {n\in \N}, \e\in (0,1] \} \quad \mbox{is bounded in}\quad  L_\infty(0,T;H^1(0,L)).
\end{equation}
For $\e \in (0,1]$ fixed, it follows from \eqref{e23}, \eqref{e41} and the definition of $a_\e$ that
\begin{equation*}
\e^{\frac{3}{2}}\|\partial_x^3(Rf_\e^n+S \mu g_\e^n)\|_{L_2(\Omega_T)} \leq \|a_\e(f_\e^n)^{\frac{3}{2}}\partial_x^3(Rf_\e^n+S \mu g_\e^n)\|_{L_2(\Omega_T)}<c
\end{equation*}
and
\begin{equation*}
\e^{\frac{3}{2}} \|\partial_x^3(f_\e^n+g_\e^n)\|_{L_2(\Omega_T)}\leq \|a_\e(g_\e^n)^{\frac{3}{2}}\partial_x^3(f_\e^n+g_\e^n)\|_{L_2(\Omega_T)} < c
\end{equation*}
for some constant $c>0$ independent of $n\in \N$.
Since $R>S\mu$, we deduce that there exists a constant $c=c(\e)>0$, such that
\begin{equation}\label{p3}
\|\partial_x^3 f_\e^n\|_{L_2(\Omega_T)}, \|\partial_x^3 g_\e^n\|_{L_2(\Omega_T)} <c(\e).
\end{equation}
The Poincar\'{e}--Wirtinger Theorem 
 together with \eqref{11} and \eqref{p3}  yield that
\[ \{f_\e^n, g_\e^n\mid {n\in \N} \} \quad \mbox{ is bounded in}\quad L_2(0,T;H^3(0,L)).\]

$ii)$ In view of \eqref{ste}, 
it is left to show that $\{\Gamma_\e^n \mid n\in \N,\e\in (0,1]\}$ is bounded in  $L_2(0,T;H^1(0,L))$. Note that
\begin{align*}
\|\Gamma_\e^n\|^2_{L_2(0,T;H^1(0,L))} &\leq  T\|\Gamma_\e^n\|^2_{L_\infty(0,T;L_2(0,L))} + \frac{1}{c_\Phi} \displaystyle{\int_0^T} c_\Phi\|\partial_x \Gamma_\e^n(t)\|_2^2\,dt.
\end{align*}
We use Assumption A2) in order to estimate the second term on the right-hand side 
\[\frac{1}{c_\Phi} \displaystyle{\int_0^T} c_\Phi\|\partial_x \Gamma_\e^n(t)\|_2^2\,dt \leq \frac{1}{c_\Phi} \displaystyle{\int_0^T} \|\sqrt{\Phi^{\prime\prime}(\Gamma_\e^n(t))}\partial_x \Gamma_\e^n(t)\|_2^2\,dt,\]
which is bounded by a constant independent of $n\in \N$ and $\e\in(0,1]$, due to \eqref{e7} and the assertion follows. 
\end{proof} 

Notice that the bounds $f_\e^n, g_\e^n \in L_2(0,T,H^3(0,L))$ depend on $\e \in (0,1]$ (cf. \eqref{p3}) and we lose these bounds in the limit when $\e$ tends to zero. 
We make use of the a-priori bounds provided by the energy equality in order to derive uniform bounds for the time derivatives of the Galerkin approximation. Set
\begin{align*}
H_f^{\e,n}:=& \displaystyle{\frac{a_\e(f_\e^n)^2}{\sqrt{3}}}\left[\displaystyle{\frac{a_\e(f_\e^n)\partial_x^3(Rf_\e^n+S \mu g_\e^n)}{\sqrt{3}}}+\frac{\sqrt{3}}{2}\mu\left(S a_\e(g_\e^n)\partial_x^3(f_\e^n+g_\e^n)+ \partial_x\sigma_\e(\Gamma_\e^n)\right)\right] , 
\\[5pt] 
H_g^{\e,n}:=&\displaystyle{\frac{\sqrt{3}}{2}} a_\e(g_\e^n)a_\e(f_\e^n)\left[\displaystyle{\frac{a_\e(f_\e^n)\partial_x^3(Rf_\e^n+S \mu g_\e^n)}{\sqrt{3}}}+\frac{2}{\sqrt{3}}\mu \left(S a_\e(g_\e^n)\partial_x^3(f_\e^n+g_\e^n)+\partial_x\sigma_\e(\Gamma_\e^n)\right)\right] \\[5pt]
  &\quad + \displaystyle{\frac{a_\e^2(g_\e^n)}{\sqrt{3}}}\left[\displaystyle{\frac{S}{\sqrt{3}}}a_\e(g_\e^n)\partial_x^3(f_\e^n+g_\e^n)+\displaystyle{\frac{\sqrt{3}}{2}}\partial_x\sigma_\e(\Gamma_\e^n)\right],
\\[5pt]
H_{\Gamma}^{\e,n}:=& \displaystyle{\frac{\sqrt{3}}{2}}\tau_\e(\Gamma_\e^n) a_\e(f_\e^n)\left[\displaystyle{\frac{a_\e(f_\e^n)\partial_x^3(Rf_\e^n+S \mu g_\e^n)}{\sqrt{3}}}+\frac{2}{\sqrt{3}}\mu\left(S a_\e(g_\e^n)\partial_x^3(f_\e^n+g_\e^n)+\partial_x\sigma_\e(\Gamma_\e^n)\right)\right] \\[5pt]
  & \quad+\displaystyle{\frac{\sqrt{3}}{2}}\tau_\e(\Gamma_\e^n) a_\e(g_\e^n)\left[\displaystyle{\frac{S}{\sqrt{3}}}a_\e(g_\e^n)\partial_x^3(f_\e^n+g_\e^n)+\displaystyle{\frac{\sqrt{3}}{2}}\partial_x\sigma_\e(\Gamma_\e^n)\right] \\[5pt]
  &\quad+\frac{1}{4}\tau_\e(\Gamma_\e^n) a_\e(g_\e^n)\partial_x \sigma_\e(\Gamma_\e^n)-D\partial_x\Gamma_\e^n.
\end{align*}

\begin{lem}\label{GAtD} The time derivatives of the Galerkin approximation satisfy the following bounds:  
\begin{align*}
& \{\partial_t f_\e^n, \partial_t g_\e^n\mid {n\in \N}, \e\in (0,1] \} \quad \mbox{ is bounded in}\quad L_2(0,T;(H^1(0,L))^\prime),\\
 &\{\partial_t \Gamma_\e^n \mid n\in\N, \e\in (0,1]\}\quad \mbox{ is bounded in}\quad L_\frac{3}{2}(0,T;(W^1_3(0,L))^\prime).
 \end{align*}
In fact, for each $\e\in(0,1]$ fixed, we have that
\[
\{\partial_t \Gamma_\e^n \mid n\in\N\}\quad \mbox{ is bounded in}\quad L_2(0,T;(H^1(0,L))^\prime).
\]
\end{lem}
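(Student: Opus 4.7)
The idea is to dualize the Galerkin identities (\ref{F})--(\ref{Gamma}) by testing against the orthogonal projection of a generic Sobolev function onto $\spa\{\phi_0,\ldots,\phi_n\}$ and then absorbing the projection by its uniform-in-$n$ stability. Let $P_n$ denote the orthogonal $L_2(0,L)$-projection onto $\spa\{\phi_0,\ldots,\phi_n\}$. Since the $\phi_k$'s are the Neumann eigenfunctions of $-\Delta$, they are orthogonal in $H^1(0,L)$ as well, so $\|P_n\xi\|_{H^1(0,L)}\leq \|\xi\|_{H^1(0,L)}$; the analogous bound $\|P_n\xi\|_{W^1_3(0,L)}\leq C\|\xi\|_{W^1_3(0,L)}$ holds with $C$ independent of $n$ by the classical $L_p$-boundedness ($1<p<\infty$) of the partial sum operator of the cosine series after even extension.

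For $\xi\in H^1(0,L)$, the fact that $\partial_t f_\e^n(t)\in \spa\{\phi_0,\ldots,\phi_n\}$ combined with (\ref{F}) yields
\begin{equation*}
\langle \partial_t f_\e^n(t),\xi\rangle_{H^1(0,L)} = (\partial_t f_\e^n(t)\mid P_n\xi)_2 = \int_0^L H_f^{\e,n}(t,x)\,\partial_x(P_n\xi)(x)\,dx.
\end{equation*}
By Lemma \ref{GaB} i) and the one-dimensional embedding $H^1(0,L)\hookrightarrow L_\infty(0,L)$, the film heights $f_\e^n,g_\e^n$, and hence $a_\e(f_\e^n)^{3/2}$, are uniformly bounded in $L_\infty(\Omega_T)$. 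Factoring $a_\e(f_\e^n)^{3/2}$ out of $H_f^{\e,n}$ and appealing to the bound (\ref{e2}) on $J_f^\e$ then produces a uniform bound $\|H_f^{\e,n}\|_{L_2(\Omega_T)}\leq c$. Cauchy--Schwarz together with the $H^1$-contractivity of $P_n$ delivers the claimed $L_2(0,T;(H^1(0,L))^\prime)$ bound for $\partial_t f_\e^n$; $\partial_t g_\e^n$ is treated analogously, using (\ref{e2})--(\ref{e5}) to control $H_g^{\e,n}$ in $L_2(\Omega_T)$.

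For $\partial_t\Gamma_\e^n$ I would proceed along the same lines, now testing against $\xi\in W^1_3(0,L)$. The diffusive term $-D\partial_x\Gamma_\e^n$ lies in $L_2(\Omega_T)\hookrightarrow L_{3/2}(\Omega_T)$ uniformly by (\ref{e7}) and Assumption A2). In each convective piece of $H_\Gamma^{\e,n}$ the factor $a_\e(f_\e^n)$ (respectively $a_\e(g_\e^n)$) is bounded in $L_\infty(\Omega_T)$ uniformly in $n,\e$, so multiplying out and using (\ref{e2})--(\ref{e5}) shows the bracketed flux is in $L_2(\Omega_T)$ uniformly; combined with the remark after Lemma \ref{GaB}, which via $|\tau_\e(s)|\leq |s|$ provides $\tau_\e(\Gamma_\e^n)\in L_6(\Omega_T)$, H\"older's inequality ($\tfrac{1}{6}+\tfrac{1}{2}=\tfrac{2}{3}$) yields $\|H_\Gamma^{\e,n}\|_{L_{3/2}(\Omega_T)}\leq c$. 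Invoking H\"older and the $W^1_3$-stability of $P_n$ then gives the asserted bound in $L_{3/2}(0,T;(W^1_3(0,L))^\prime)$. For the sharper statement at fixed $\e$, the inequality $|\tau_\e|\leq 2\ei$ promotes $\tau_\e(\Gamma_\e^n)$ to a uniform-in-$n$ $L_\infty(\Omega_T)$-bound (with constant depending on $\e$), so repeating the computation in an $L_2$--$L_2$ pairing yields the $L_2(0,T;(H^1(0,L))^\prime)$ bound.

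The main obstacle is the surfactant equation: since neither $\tau_\e(\Gamma_\e^n)$ (uniformly in $\e$) nor the convective flux factors lie in $L_\infty(\Omega_T)$ uniformly in $n$, one cannot close the estimate with an $(H^1)^\prime$-pairing and is forced to balance the integrability gain from the parabolic embedding $L_\infty(0,T;L_2)\cap L_2(0,T;H^1)\hookrightarrow L_6(\Omega_T)$ against H\"older's inequality; this is precisely what dictates the exponents $3/2$ and $3$ appearing in the statement.
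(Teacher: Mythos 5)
Your argument is correct and follows essentially the same route as the paper: bound $H_f^{\e,n},H_g^{\e,n}$ uniformly in $L_2(\Omega_T)$ by factoring out the $L_\infty$-bounded $a_\e$-powers against the energy bounds \eqref{e2}--\eqref{e5}, bound $H_\Gamma^{\e,n}$ in $L_{3/2}(\Omega_T)$ via the $L_6$-bound on $\Gamma_\e^n$ (upgraded to $L_2(\Omega_T)$ for fixed $\e$ using the boundedness of $\tau_\e$), and then dualize the Galerkin identities through the spectral projection. Your explicit remark on the uniform $W^1_3$-stability of the partial-sum projection (via the $L_p$-boundedness, $1<p<\infty$, of the cosine/sine partial sums) is a point the paper passes over with ``follows similarly,'' so if anything your write-up is slightly more careful there; the only blemish is the constant in your bound for $\tau_\e$, which by A2) is $2/(c_\Phi\e)$ rather than $2\ei$, an immaterial difference.
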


\begin{proof}
Observe that $H_f^{\e,n} \in L_2(\Omega_T)$, since
\allowdisplaybreaks
\begin{align*}
\|H_f^{\e,n}\|_{L_2(\Omega_T)}&= \left\|\displaystyle{\frac{a_\e(f_\e^n)^2}{\sqrt{3}}}\left[\displaystyle{\frac{a_\e(f_\e^n)\partial_x^3(Rf_\e^n+S \mu g_\e^n)}{\sqrt{3}}}+\frac{\sqrt{3}}{2}\mu\left(S a_\e(g_\e^n)\partial_x^3(f_\e^n+g_\e^n)+ \partial_x\sigma_\e(\Gamma_\e^n)\right)\right]\right\|_{L_2(\Omega_T)} \\[5pt]
 &\leq \left\|\displaystyle{\frac{a_\e(f_\e^n)^{\frac{3}{2}}}{\sqrt{3}}}\right\|_{L_\infty(\Omega_T)} \left\|\sqrt{a_\e(f_\e^n)}\left[\displaystyle{\frac{a_\e(f_\e^n)\partial_x^3(Rf_\e^n+S \mu g_\e^n)}{\sqrt{3}}}\right.\right. \\[5pt]
&\left.\left.\qquad\qquad\qquad\qquad\qquad\qquad+\frac{\sqrt{3}}{2}\mu\left(S a_\e(g_\e^n)\partial_x^3(f_\e^n+g_\e^n)+ \partial_x\sigma_\e(\Gamma_\e^n)\right)\right]\right\|_{L_2(\Omega_T)} <c,
\end{align*} 
by \eqref{e2} and Lemma \ref{GaB} i), where $c>0$ is a constant independent of $n\in \N$ and $\e\in(0,1]$.
Analogously, one shows that $H_g^{\e,n}$ is uniformly bounded $L_2(\Omega_T)$ for all $n\in \N$ and $\e\in(0,1]$. Using the uniform bound $\{\Gamma_\e^n\mid {n\in \N}, \e\in (0,1] \}\subset L_6(\Omega_T)$ (cf. Lemma \ref{GaB}),  H\"older's inequality implies that 
\begin{equation}\label{regGamma}\{H_\Gamma^{\e,n}\mid n\in \N, \e\in(0,1]\}\quad \mbox{ is bounded in}\quad L_{\frac{3}{2}}(\Omega_T).
\end{equation} 
Let $\e\in(0,1]$ be fixed. Then, the regularity for $H_\Gamma^{\e,n}$ can be improved due to the regularization by the truncation function $\tau_\e$, which is bounded for every fixed $\e\in (0,1]$:
\begin{align*}
\left\|H_\Gamma^{\e,n}\right\|_{L_2(\Omega_T)} &= \left\| \displaystyle{\frac{\sqrt{3}}{2}}\tau_\e(\Gamma_\e^n) a_\e(f_\e^n)\left[\displaystyle{\frac{a_\e(f_\e^n)\partial_x^3(Rf_\e^n+S \mu g_\e^n)}{\sqrt{3}}}+\frac{2}{\sqrt{3}}\mu\left(S a_\e(g_\e^n)\partial_x^3(f_\e^n+g_\e^n)+\partial_x\sigma_\e(\Gamma_\e^n)\right)\right]\right. \\[5pt]
  & \quad+\displaystyle{\frac{\sqrt{3}}{2}}\tau_\e(\Gamma_\e^n) a_\e(g_\e^n)\left[\displaystyle{\frac{S}{\sqrt{3}}}a_\e(g_\e^n)\partial_x^3(f_\e^n+g_\e^n)+\displaystyle{\frac{\sqrt{3}}{2}}\partial_x\sigma_\e(\Gamma_\e^n)\right] \\[5pt]
  &\left.\quad+\frac{1}{4}\tau_\e(\Gamma_\e^n) a_\e(g_\e^n)\partial_x \sigma_\e(\Gamma_\e^n)-D\partial_x\Gamma_\e^n\right\|_{L_2(\Omega_T)} \\[5pt]
  &\leq \left\| \displaystyle{\frac{\sqrt{3}}{2}}\tau_\e(\Gamma_\e^n)\sqrt{a_\e(f_\e^n)} \right\|_{L_\infty(\Omega_T)} \left\| \sqrt{a_\e(f_\e^n)}\left[\displaystyle{\frac{a_\e(f_\e^n)\partial_x^3(Rf_\e^n+S \mu g_\e^n)}{\sqrt{3}}} \right.\right. \\[5pt]
  &\left.\left.\qquad\qquad\qquad\qquad\qquad\qquad\qquad\qquad+\frac{2}{\sqrt{3}}\mu\left(S a_\e(g_\e^n)\partial_x^3(f_\e^n+g_\e^n)+\partial_x\sigma_\e(\Gamma_\e^n)\right)\right]\right\|_{L_2(\Omega_T)}  \\[5pt]
  & \quad+\left\| \displaystyle{\frac{\sqrt{3}}{2}}\tau_\e(\Gamma_\e^n) \sqrt{a_\e(g_\e^n)}\right\|_{L_\infty(\Omega_T)}\left\|\sqrt{a_\e(g_\e^n)}\left[\displaystyle{\frac{S}{\sqrt{3}}}a_\e(g_\e^n)\partial_x^3(f_\e^n+g_\e^n)+\displaystyle{\frac{\sqrt{3}}{2}}\partial_x\sigma_\e(\Gamma_\e^n)\right]\right\|_{L_2(\Omega_T)} \\[5pt]
  &\quad+\left\|\frac{1}{4}\tau_\e(\Gamma_\e^n) \sqrt{a_\e(g_\e^n)}\right\|_{L_\infty(\Omega_T)}\left\|\sqrt{a_\e(g_\e^n)}\partial_x \sigma_\e(\Gamma_\e^n)\right\|_{L_2(\Omega_T)}+\left\|D\partial_x\Gamma_\e^n\right\|_{L_2(\Omega_T)} <c,
\end{align*}
in view of \eqref{e4}, \eqref{e5}, \eqref{e21} and Lemma \ref{GaB} 
where $c=c(\e)>0$ is a constant dependent on $\e\in(0,1]$, but independent of $n\in \N$.

Given $\xi \in H^1(0,L)$, we use the following notation for the expansion of $\xi$ in the basis $\{\phi_k\mid k\in \N\}$:
\begin{equation}\label{16}
\xi^n:= \displaystyle{\sum_{k=0}^n}\left( \xi \mid \phi_k \right)_2 \phi_k \in \mbox{span}\{\phi_0,\ldots,\phi_n\}.
\end{equation} 
Integration by parts implies
\begin{align*}
\langle \partial_t f_\e^n(t),\xi\rangle_{L_2} &= \left( \partial_t f_\e^n(t)\mid \xi^n \right)_2= \left( H_f^{\e,n}(t)\mid\partial_x \xi^n\right)_2 \leq \|H_f^{\e,n}(t)\|_{L_2(0,L} \|\partial_x \xi^n\|_{L_2(0,L)}\\
&\leq \|H_f^{\e,n}(t)\|_{L_2(0,L} \|\xi\|_{H^1(0,L)}
\end{align*}
for every $t> 0$. 
Hence, the function $\partial_t f_\e^n(t)$ belongs to the dual $(H^1(0,L))^\prime$ of $H^1(0,L)$ for all $t> 0$. Integration with respect to time yields
\begin{align*}\|\partial_t f_\e^n\|^2_{L_2(0,T,(H^1(0,L))^\prime)} &= \displaystyle{\int_0^T}\|\partial_t f_\e^n(t)\|^2_{(H^1(0,L))^\prime}\,dt =\displaystyle{\int_0^T}\sup_{\|\xi\|_{H^1(0,L)}\leq 1} |\left( \partial_t f_\e^n(t)\mid\xi\right)_2 |^2\,dt \\[5pt]
&\leq \displaystyle{\int_0^T}\|H_f^{\e,n}(t)\|_2^2\,dt = \|H_f^{\e,n}\|^2_{L_2(\Omega_T)}.
\end{align*} 
Analogously one shows that $\|\partial_t g_\e^n\|^2_{L_2(0,T,(H^1(0,L))^\prime)} \leq \|H_g^{\e,n}\|^2_{L_2(\Omega_T)}$ and $\|\partial_t \Gamma_\e^n\|^2_{L_2(0,T,(H^1(0,L))^\prime)} \leq \|H_\Gamma^{\e,n}\|^2_{L_2(\Omega_T)}$  for fixed $\e\in(0,1]$ so that
\[\{\partial_t f_\e^n,\partial_t g_\e^n \mid n\in \N,\e \in (0,1]\} \quad \mbox{is bounded in}\quad L_2(0,T,(H^1(0,L))^\prime)\]
and for each $\e\in(0,1]$
\[\{\partial_t \Gamma_\e^n \mid n\in \N\} \quad \mbox{is bounded in}\quad L_2(0,T,(H^1(0,L))^\prime).\]
The remaining assertion that
\[\{\partial_t \Gamma_\e^n \mid n\in \N, \e\in(0,1]\} \quad \mbox{is bounded in}\quad L_\frac{3}{2}(0,T,(W^1_3(0,L))^\prime).\]
follows similarly by recalling that the family $\{H_\Gamma^{\e,n}\mid n\in \N, \e \in (0,1]\}$ is bounded in $L_{\frac{3}{2}}(\Omega_T)$. 

\end{proof}
Let $\e\in (0,1]$ be fixed. 
Lemma \ref{GaB} and Lemma \ref{GAtD} provide necessary bounds for the Galerkin approximation $(f_\e^n, g_\e^n,\Gamma_\e^n)$ to extract weakly convergent subsequences.
Since 
\[H^k(0,L)\chookrightarrow C^{(k-1)+\alpha}([0,L])\hookrightarrow (H^1(0,L))^\prime,\quad L_2(0,T) \chookrightarrow (H^1(0,L))^\prime \] 
for $k\in \N$ and $\alpha \in [0,\frac{1}{2})$,
the bounds of
\begin{alignat*}{2}
&\{f_\e^n, g_\e^n\mid n\in\N\} &&\quad\mbox{in}\quad L_\infty(0,T;H^1(0,L))\cap L_2(0,T;H^3(0,L)),\\[5pt]
&\{\Gamma_\e^n\mid n\in\N\} &&\quad\mbox{in}\quad L_\infty(0,T;L_2(0,L))\cap L_2(0,T;H^1(0,L)),\\[5pt]
&\{\partial_t f_\e^n,\partial_t g_\e^n,\partial_t \Gamma_\e^n\mid n\in\N\}  &&\quad\mbox{in}\quad L_2(0,T;(H^1(0,L))^\prime),
\end{alignat*} 
together with \cite[Corollary 4]{Sim}
imply that
\begin{align}\label{relc}
&(f_\e^n)_{n\in\N}, (g_\e^n)_{n\in\N} \quad \mbox{are relatively compact in}\quad C([0,T];C^{\alpha}([0,L]))\cap L_2(0,T;C^{2+\alpha}([0,L]))\\[5pt]
\label{relcG}
&(\Gamma_\e^n)_{n\in\N} \quad\mbox{is relatively compact in}\quad C([0,T];(H^1(0,L))^\prime)\cap L_2(0,T;C^{\alpha}([0,L]))
\end{align}
for $\alpha \in [0,\frac{1}{2})$.
The relative compactnesses in \eqref{relc} and \eqref{relcG} provide the existence of converging subsequences (not relabeled)
\begin{align}
\label{csf}f_\e^n \longrightarrow f_\e \qquad &\mbox{in} \qquad C([0,T];C^{\alpha}([0,L]))\cap L_2(0,T;C^{2+\alpha}([0,L])), \\[5pt]
\label{csg}g_\e^n \longrightarrow g_\e \qquad &\mbox{in} \qquad C([0,T];C^{\alpha}([0,L]))\cap L_2(0,T;C^{2+\alpha}([0,L])),\\[5pt]
\label{csG}\Gamma_\e^n \longrightarrow \Gamma_\e\qquad &\mbox{in} \qquad C([0,T];(H^1(0,L))^\prime)\cap  L_2(0,T;C^{\alpha}([0,L])).
\end{align}

\begin{lem}\label{lffg} The limit functions $f_\e, g_\e$ obtained in \eqref{csf}, \eqref{csg} belong to
\[  L_\infty(0,T; H^1(0,L))\cap L_2(0,T;H^3(0,L))\]
and there exists a subsequence (not relabeled), such that
\begin{align}\label{3weakC}
\partial_x^k f_\e^n \warrow \partial_x^k f_\e, \qquad \partial_x^k g_\e^n \warrow \partial_x^k g_\e\qquad \mbox{in}\quad L_2(\Omega_T)\quad \mbox{for}\quad k=1,2,3.
\end{align}
Moreover, the time derivatives $\partial_ t f_\e, \partial_t g_\e$ belong to $L_2(0,T;(H^1(0,L))^\prime)$ with
\begin{align*}
\partial_t f_\e^n \warrow \partial_t f_\e, \qquad \partial_t g_\e^n \warrow \partial_t g_\e\qquad \mbox{in}\quad L_2(0,T;(H^1(0,L))^\prime).
\end{align*}
\end{lem}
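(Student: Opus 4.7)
The plan is to extract weakly (or weakly-$*$) convergent subsequences from the uniform bounds of Lemma \ref{GaB} and Lemma \ref{GAtD}, and then to identify these weak limits with the strong limits $f_\e, g_\e$ already obtained in \eqref{csf}--\eqref{csg} by uniqueness of limits in $L_2(\Omega_T)$ and in the sense of distributions on $\Omega_T$. No new a-priori information is required beyond what has already been collected; the work is compactness plus identification.

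First, by Lemma \ref{GaB} i), $(f_\e^n)_n$ and $(g_\e^n)_n$ are bounded in the reflexive Hilbert space $L_2(0,T;H^3(0,L))$, so the Banach-Alaoglu theorem yields, along a subsequence (not relabelled), weak limits $\tilde f_\e, \tilde g_\e \in L_2(0,T;H^3(0,L))$. The strong convergences in \eqref{csf}--\eqref{csg} imply in particular $f_\e^n \to f_\e$ and $g_\e^n \to g_\e$ in $L_2(\Omega_T)$; since weak convergence in $L_2(0,T;H^3(0,L))$ implies weak convergence in $L_2(\Omega_T)$, uniqueness of weak limits in $L_2(\Omega_T)$ forces $\tilde f_\e = f_\e$ and $\tilde g_\e = g_\e$. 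Hence $f_\e, g_\e \in L_2(0,T;H^3(0,L))$. Since $\partial_x^k : L_2(0,T;H^3(0,L)) \to L_2(\Omega_T)$ is bounded and linear for $k=1,2,3$, weak convergence transfers and \eqref{3weakC} follows. The membership in $L_\infty(0,T;H^1(0,L))$ is obtained analogously: because $H^1(0,L)$ is separable and reflexive, $L_\infty(0,T;H^1(0,L))$ is (isometrically) a dual space, so the Banach-Alaoglu theorem provides weak-$*$ limits of $(f_\e^n)_n, (g_\e^n)_n$ in this space, and the same uniqueness argument against the $L_2$-strong limits places $f_\e, g_\e$ in $L_\infty(0,T;H^1(0,L))$.

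Second, Lemma \ref{GAtD} asserts that $(\partial_t f_\e^n)_n, (\partial_t g_\e^n)_n$ are bounded in the reflexive Banach space $L_2(0,T;(H^1(0,L))^\prime)$, so a further application of Banach-Alaoglu yields weakly convergent subsequences with limits $F, G \in L_2(0,T;(H^1(0,L))^\prime)$. To identify $F=\partial_t f_\e$ (the argument for $G = \partial_t g_\e$ is the same), I test against $\varphi \in C_c^\infty((0,T)\times[0,L])$: on the one hand $\int_0^T \langle \partial_t f_\e^n, \varphi(t)\rangle_{H^1(0,L)}\,dt \to \int_0^T \langle F, \varphi(t)\rangle_{H^1(0,L)}\,dt$ by the weak convergence just obtained; on the other hand an integration by parts in $t$ together with the strong $L_2(\Omega_T)$-convergence $f_\e^n \to f_\e$ from \eqref{csf} gives $\int_0^T \langle \partial_t f_\e^n, \varphi(t)\rangle_{H^1(0,L)}\,dt = -\int_{\Omega_T} f_\e^n\,\partial_t\varphi\,d(x,t) \to -\int_{\Omega_T} f_\e\,\partial_t\varphi\,d(x,t)$, which equals $\int_0^T\langle \partial_t f_\e, \varphi(t)\rangle_{H^1(0,L)}\,dt$ by the distributional definition. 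Hence $F = \partial_t f_\e$, and the stated weak convergence of time derivatives holds.

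The argument is essentially bookkeeping; the only point that requires attention is a diagonal extraction so that a single subsequence index works simultaneously for \eqref{csf}--\eqref{csg}, for the weak convergence of $\partial_x^k f_\e^n, \partial_x^k g_\e^n$ in $L_2(\Omega_T)$ ($k=1,2,3$), for the weak-$*$ convergence in $L_\infty(0,T;H^1(0,L))$, and for the weak convergence of time derivatives in $L_2(0,T;(H^1(0,L))^\prime)$. Beyond this routine bookkeeping no genuine analytical difficulty arises, since every limit identification reduces to uniqueness of weak limits against a common strong-$L_2$ limit.
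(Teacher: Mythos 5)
Your argument is correct and follows essentially the same route as the paper: boundedness in $L_2(0,T;H^3(0,L))$ and in $L_2(0,T;(H^1(0,L))^\prime)$ from Lemmas \ref{GaB} and \ref{GAtD}, weak (respectively weak-$*$ for $L_\infty(0,T;H^1(0,L))$) compactness to extract subsequences, and identification of all limits with $f_\e, g_\e$ through the strong convergences \eqref{csf}, \eqref{csg} and uniqueness of distributional limits. The only difference is cosmetic: you spell out the integration-by-parts test for identifying the time derivatives, which the paper leaves as "a consequence of \eqref{csf}".
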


\begin{proof} We will prove the statements only for $f_\e^n$, the proofs for $g_\e^n$ are similar. 
Owing to Lemma \ref{GaB} i), the sequence $(f_\e^n)_{n\in \N}$ is bounded in $L_2(0,T;H^3(0,L))$. Thus, by Eberlein--Smulyan's theorem, there exists a weakly convergent subsequence (not relabeled), such that
\begin{equation}\label{weakC}
f_\e^n\warrow \overline{f}_\e\qquad \mbox{in}\quad L_2(0,T;H^3(0,L))
\end{equation}
for some $\overline{f}_\e\in L_2(0,T;H^3(0,L))$. The uniqueness of limits in the sense of distributions implies together with \eqref{csf} that $f_\e = \overline{f}_\e\in L_2(0,T;H^3(0,L))$ and the claim \eqref{3weakC} is satisfied.
By the weak-* compactness of $L_\infty(0,T;H^1(0,L))$ and \eqref{csf}, we deduce that the limit function $f_\e$ belongs to $L_\infty(0,T;H^1(0,L))$.
In view of Lemma \ref{GAtD}, the time derivative $(\partial_t f_\e^n)_{n\in \N}$ is bounded in the Hilbert space $L_2(0,T;(H^1(0,L))^\prime)$. Thus, by Eberlein--Smulyan's theorem, there exists a weakly convergent subsequence (not relabeled) 
\[\partial_t f_\e^{n} \warrow h\qquad \mbox{in}\quad L_2(0,T;(H^1(0,L))^\prime),\]
for some limit function $h\in L_2(0,T;(H^1(0,L))^\prime)$. The identification of the limit function $h$ with $\partial_t f_\e$ is then a consequence of \eqref{csf}
\end{proof}

\begin{bem}\label{fepsu} 
\emph{Note that the bounds of $(f_\e^n)_{n\in \N}, (g_\e^n)_{n\in \N}$ in $L_\infty(0,T;H^1(0,L))$  (cf. Lemma \ref{GaB}) are in fact uniform also in $\e\in (0,1]$ and we conclude that
}
\begin{equation}\label{remark}
\{f_\e, g_\e\mid \e\in (0,1]\}\quad\mbox{\emph{is bounded in}}\quad L_\infty(0,T;H^1(0,L)).
\end{equation}
\emph{This uniform bound will be in particular  necessary in the proof of Theorem \ref{nonnG}, where the non-negativity of the family of Galerkin approximations $(\Gamma_\e)_{\e \in (0,1]}$ is shown.}
\end{bem}

In the following lemma we collect weak and strong convergences concerning the family $(\Gamma_\e^n)_{n\in \N}$ in certain Banach spaces.
Note that due to $\Omega_T$ having finite measure it is a consequence of H\"older's inequality that any bound in $L_p(\Omega_T)$  holds true for the whole range $[1,p]$.
\begin{lem}\label{lfG}
The family $(\Gamma_\e^n)_{n\in \N}$ satisfies
\begin{itemize}
\item[i)] $\Gamma_\e^n \longrightarrow \Gamma_\e$ in $ L_q(\Omega_T)$ for all $q\in [1,6)$ and the family $\{\Gamma_\e\mid \e\in (0,1]\}$ is bounded in 
$L_\infty(0,T;L_2(0,L))\cap L_2(0,T;H^1(0,L))\subset L_6(\Omega_T)$.
\item[ii)] There exist a subsequence (not relabeled) such that $\partial_x \Gamma_\e^n \warrow \partial_x \Gamma_\e$ in $ L_2(\Omega_T)$.
\item[iii)] For each $\e\in(0,1]$ fixed we have that $\partial_t \Gamma_\e^n \warrow \partial_t \Gamma_\e $ in $ L_{2}(0,T;(H^1(0,L))^\prime).$ Moreover, the family $\{\partial_t \Gamma_\e\mid \e\in(0,1]\}$ is bounded in $ L_{\frac{3}{2}}(0,T;(W^1_3(0,L))^\prime)$.
\item[iv)]  $\Phi(\Gamma_\e^n)\longrightarrow\Phi(\Gamma_\e) $ in $ L_1(\Omega_T)$ and the family $\{\Phi(\Gamma_\e)\mid \e \in(0,1]\}$ is bounded in $L_{\frac{3}{2}}(\Omega_T)$. 
\item[v)] There exist a subsequence (not relabeled) such that $ \sqrt{\Phi^{\prime\prime}(\Gamma_\e^n)}\partial_x \Gamma_\e^n \warrow \sqrt{\Phi^{\prime\prime}(\Gamma_\e)}\partial_x \Gamma_\e $ in $  L_2(\Omega_T)$. 
\end{itemize}
\end{lem}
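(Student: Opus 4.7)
The plan is to combine the strong convergence \eqref{csG} with the uniform bounds collected in Lemmas \ref{GaB} ii) and \ref{GAtD}, extracting weak limits by Eberlein--Smulyan, identifying them via uniqueness of distributional limits, and upgrading certain weak convergences to strong ones by interpolation or Vitali's theorem. For (i), the bound in $L_\infty(0,T;L_2(0,L))\cap L_2(0,T;H^1(0,L))$ embeds into $L_6(\Omega_T)$ (\cite[Proposition I.3.2]{DiB}), while \eqref{csG} already yields $\Gamma_\e^n\to\Gamma_\e$ in $L_2(\Omega_T)$ and, after a further subsequence, a.e.\ in $\Omega_T$. H\"older interpolation $\|\cdot\|_{L_q}\le\|\cdot\|_{L_2}^{\theta}\|\cdot\|_{L_6}^{1-\theta}$ (for $q\in(2,6)$, with the direct inclusion $L_2\hookrightarrow L_q$ on the bounded domain handling $q\le 2$) upgrades this to strong $L_q(\Omega_T)$-convergence for every $q\in[1,6)$, and weak-$\ast$ / weak lower semicontinuity passes the uniform bounds to $\Gamma_\e$. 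Parts (ii) and (iii) are routine Eberlein--Smulyan extractions from the bounded families $(\partial_x\Gamma_\e^n)\subset L_2(\Omega_T)$ and $(\partial_t\Gamma_\e^n)\subset L_2(0,T;(H^1(0,L))^\prime)$ for fixed $\e$, respectively $\subset L_{3/2}(0,T;(W^1_3(0,L))^\prime)$ uniformly in $(n,\e)$; the weak limits are identified with $\partial_x\Gamma_\e$ and $\partial_t\Gamma_\e$ by testing against $\varphi\in C_c^\infty(\Omega_T)$, integrating by parts, and invoking \eqref{csG}, and the uniform $L_{3/2}$-bound is inherited by $\{\partial_t\Gamma_\e\}$ via lower semicontinuity.

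For (iv), Assumption A3) together with $\Phi(1)=\Phi^\prime(1)=0$ yields, after integrating $\Phi^{\prime\prime}$ twice from $1$, the polynomial bound $|\Phi(s)|\le C(1+|s|^{r+2})$. Since $r+2<3$, the $L_6(\Omega_T)$-bound on $\Gamma_\e^n$ implies that $\{\Phi(\Gamma_\e^n)\}$ is bounded in $L_{6/(r+2)}(\Omega_T)\hookrightarrow L_{3/2}(\Omega_T)$. Pointwise a.e.\ convergence of $\Gamma_\e^n$ (from (i)) and continuity of $\Phi$ give $\Phi(\Gamma_\e^n)\to\Phi(\Gamma_\e)$ a.e.; the $L_{3/2}$-bound provides uniform integrability so that Vitali's theorem delivers $L_1(\Omega_T)$-convergence, and the $L_{3/2}$-bound is passed to the limit by weak lower semicontinuity.

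Part (v) is the main obstacle. Bound \eqref{e7} yields $\sqrt{\Phi^{\prime\prime}(\Gamma_\e^n)}\partial_x\Gamma_\e^n\warrow\chi$ weakly in $L_2(\Omega_T)$ along a subsequence, and the task is to identify $\chi$ with $\sqrt{\Phi^{\prime\prime}(\Gamma_\e)}\partial_x\Gamma_\e$. For $\varphi\in C_c^\infty(\Omega_T)$ I rewrite
\[
\int_{\Omega_T}\sqrt{\Phi^{\prime\prime}(\Gamma_\e^n)}\,\partial_x\Gamma_\e^n\,\varphi\,d(x,t) \;=\; \int_{\Omega_T}\partial_x\Gamma_\e^n\cdot\bigl(\sqrt{\Phi^{\prime\prime}(\Gamma_\e^n)}\,\varphi\bigr)\,d(x,t).
\]
By A3), $\sqrt{\Phi^{\prime\prime}(s)}\le C(1+|s|^{r/2})$ with $r<1$, so the $L_6$-bound on $\Gamma_\e^n$ furnishes uniform $L_p$-integrability of $\sqrt{\Phi^{\prime\prime}(\Gamma_\e^n)}$ for some $p>2$; pointwise a.e.\ convergence from (i), continuity of $\sqrt{\Phi^{\prime\prime}}$ (Assumption A1)), and boundedness of $\varphi$ combined with Vitali's theorem then promote this to strong $L_2(\Omega_T)$-convergence $\sqrt{\Phi^{\prime\prime}(\Gamma_\e^n)}\varphi\to\sqrt{\Phi^{\prime\prime}(\Gamma_\e)}\varphi$. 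Pairing this strongly convergent factor against the weak $L_2$-convergence $\partial_x\Gamma_\e^n\warrow\partial_x\Gamma_\e$ from (ii) identifies $\chi$ as required. The hard part is precisely this upgrade from a.e.\ to strong $L_2$-convergence of the nonlinear factor $\sqrt{\Phi^{\prime\prime}(\Gamma_\e^n)}$, which rests decisively on the growth control A3) together with the improved $L_6$-integrability delivered by (i); without both, weak-times-weak limits need not agree with the product of the weak limits.
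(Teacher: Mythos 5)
Your proposal is correct and follows essentially the same route as the paper: interpolation between the strong $L_2(\Omega_T)$-convergence from \eqref{csG} and the uniform $L_6(\Omega_T)$-bound for i), Eberlein--Smulyan extractions with identification via \eqref{csG} for ii)--iii), and the growth condition A3) combined with a.e.\ convergence and Vitali's theorem for iv) and v). The only cosmetic difference is in v), where the paper invokes its appendix Lemma \ref{App} on products of strongly and weakly convergent sequences, while you reprove that weak--strong pairing inline by testing against $\varphi\in C_c^\infty(\Omega_T)$ and upgrading $\sqrt{\Phi^{\prime\prime}(\Gamma_\e^n)}\varphi$ to strong $L_2$-convergence via Vitali.
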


\begin{proof}\textbf{}
i) Since $L_6(\Omega_T)$ is a reflexive Banach space, we can extract, by Eberlein--Smulyan's theorem, a weakly convergent subsequence (not relabeled) with
\begin{equation*}
\Gamma_\e^n \warrow \Gamma_\e \qquad \mbox{in}\quad L_6(\Omega_T),
\end{equation*}
where the identification of the limit function is a consequence of \eqref{csG}.
Using Riesz's interpolation theorem, we obtain
\begin{align*}
\|\Gamma_\e^n - \Gamma_\e\|_{L_q(\Omega_T)} \leq \|\Gamma_\e^n -\Gamma_\e\|_{L_p(\Omega_T)}^{1-\theta}\|\Gamma_\e^n -\Gamma_\e\|_{L_l(\Omega_T)}^{\theta}
\end{align*}
for $ \theta\in [0,1]$ and $\frac{1}{q}= \frac{1-\theta}{p}+\frac{\theta}{l}$. Choosing $l=2$ and $p=6$ it follows from Lemma \ref{GaB} and \eqref{csG} that
\begin{equation*}
\|\Gamma_\e^n - \Gamma_\e\|_{L_q(\Omega_T)} \leq (\|\Gamma_\e^n\|_{L_6(\Omega_T)}+\|\Gamma_\e\|_{L_6(\Omega_T)})^{1-\theta} \|\Gamma_\e^n-\Gamma_\e\|_{L_2(\Omega_T)}^\theta \longrightarrow 0
\end{equation*}
for $q= \frac{6}{1+2\theta}\in [2,6)$ and $n\longrightarrow \infty$.  Due to Lemma \ref{GaB} ii) and Eberlein--Smulyan's theorem, there exist a convergent subsequence (not relabeled) such that
\[\Gamma_\e^n \warrow \Gamma_\e\quad \mbox{in}\quad L_\infty(0,T;L_2(0,L))\cap L_2(0,T;H^1(0,L)) \subset L_6(\Omega_T),\]
where the identification of the limit is due to \eqref{csG}. Since the norm is weak lower semi-continuous, we deduce that the family $\{\Gamma_\e\mid \e\in (0,1]\}$ is bounded in 
$L_\infty(0,T;L_2(0,L))\cap L_2(0,T;H^1(0,L))$.

ii) and iii) are a consequence of Lemma \ref{GaB}, Lemma \ref{GAtD} and Eberlein--Smulyan's theorem, where the identification of of the limits is due to \eqref{csG}.

iv)
We deduce from i) that there exists a subsequence (not relabeled) such that $\Gamma_\e^n - \Gamma_\e \longrightarrow 0$ point-wise almost everywhere. Since $\Phi$ is continuous, also the function $\Phi(\Gamma_\e^n)-\Phi(\Gamma_\e)$ converges point-wise to zero almost everywhere. In view of Lemma \ref{GaB} and i), Assumption A3) implies that
\[\{\Phi(\Gamma_\e^n)-\Phi(\Gamma_\e)\mid n\in\N, \e\in(0,1]\} \qquad \mbox{is bounded in} \quad L_{\frac{3}{2}}(\Omega_T).\]
Noting that any function belonging to $L_p(\Omega_T)$, where $1<p<\infty$ is uniformly integrable and $\Omega_T$ has finite measure, Vitali's convergence theorem guarantees that
\[\Phi(\Gamma_\e^n)\longrightarrow \Phi(\Gamma_\e)\qquad \mbox{in}\quad L_1(\Omega_T).\]

v) Using Assumption A3) the same argument as in iv) proves that $\{\sqrt{\Phi^{\prime\prime}(\Gamma_\e^n)}-\sqrt{\Phi^{\prime\prime}(\Gamma_\e)}\mid \e\in[0,1] \}$ is bounded in $L_6(\Omega_T)$ and 
\begin{equation}
\label{Phi_prime}
\sqrt{\Phi^{\prime\prime}(\Gamma_\e^n)}\longrightarrow\sqrt{\Phi^{\prime\prime}(\Gamma_\e)} \quad \mbox{in}\quad L_q(\Omega_T)\qquad \mbox{for}\quad q\in[1,6).
\end{equation}
From \eqref{e7} we deduce that $(\sqrt{\Phi^{\prime\prime}(\Gamma_\e^n)}\partial_x \Gamma_\e^n)_{n\in \N}$ is bounded in $L_2(\Omega_T)$. 
Hence, there exists a weakly convergent subsequence (not relabeled) such that 
\[\sqrt{\Phi^{\prime\prime}(\Gamma_\e^n)}\partial_x \Gamma_\e^n \warrow \sqrt{\Phi^{\prime\prime}(\Gamma_\e)}\partial_x \Gamma_\e \qquad \mbox{in}\quad  L_2(\Omega_T),\]
where the identification of the limit is due to Lemma \ref{App} and the uniform boundedness of the limit family $(\Gamma_\e)_{\e\in(0,1]}$ due to the lower semi-continuity of the norm.
\end{proof}

\begin{lem}\label{surfc}
The Galerkin approximation  $(\Gamma_\e^n)_{n\in \N}$ contains a subsequence (not relabeled), such that
\[ \partial_x \sigma_\e(\Gamma_\e^n)\warrow \partial_x \sigma_\e(\Gamma_\e)\quad\mbox{in}\quad  L_s(\Omega_T)\qquad \mbox{for}\quad s\in \Big[1,\frac{6}{5}\Big).\]
\end{lem}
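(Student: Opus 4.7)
The plan is to identify $\partial_x\sigma_\e(\Gamma_\e^n)$ as a product of a strongly convergent factor and a weakly $L_2$-convergent factor, and then pass to the limit by the standard weak--strong product principle. To this end I first reduce the derivative of $\sigma_\e$ to quantities already controlled by Lemmas \ref{lfG} and \ref{GaB}: using \eqref{p} and the definition \eqref{taudef} of $\tau_\e$ one checks that $\sigma_\e'(s)=-\tau_\e(s)\Phi''(s)$, so by the chain rule
\[
\partial_x\sigma_\e(\Gamma_\e^n)=-\bigl[\tau_\e(\Gamma_\e^n)\sqrt{\Phi''(\Gamma_\e^n)}\bigr]\cdot\bigl[\sqrt{\Phi''(\Gamma_\e^n)}\,\partial_x\Gamma_\e^n\bigr].
\]
The second bracket converges weakly to $\sqrt{\Phi''(\Gamma_\e)}\partial_x\Gamma_\e$ in $L_2(\Omega_T)$ by Lemma \ref{lfG} v).

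For the first bracket I would combine the pointwise bound $|\tau_\e(s)|\le|s|$ from \eqref{abtau} with Assumption A3) to obtain
\[
\bigl|\tau_\e(\Gamma_\e^n)\sqrt{\Phi''(\Gamma_\e^n)}\bigr|\le C\bigl(|\Gamma_\e^n|^{1+r/2}+|\Gamma_\e^n|\bigr).
\]
Since $\Gamma_\e^n$ is bounded in $L_6(\Omega_T)$ (Lemma \ref{GaB} ii) together with the subsequent embedding remark), the first bracket is bounded in $L_p(\Omega_T)$ for every $p\le 6/(1+r/2)$, and the assumption $r\in(0,1)$ makes this exponent strictly greater than $4$, hence in particular larger than $3$. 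By Lemma \ref{lfG} i) one has $\Gamma_\e^n\to\Gamma_\e$ in $L_q(\Omega_T)$ for $q\in[1,6)$, so upon passing to a further subsequence convergence also holds almost everywhere in $\Omega_T$. The continuity of $\tau_\e$ (locally Lipschitz) and of $\sqrt{\Phi''}$ (which is continuous and bounded away from zero by A2)) then yields
\[
\tau_\e(\Gamma_\e^n)\sqrt{\Phi''(\Gamma_\e^n)}\longrightarrow \tau_\e(\Gamma_\e)\sqrt{\Phi''(\Gamma_\e)}\quad\text{a.e.\ in }\Omega_T.
\]
Vitali's convergence theorem, applied exactly as in the proof of Lemma \ref{lfG} iv), upgrades this to strong convergence in $L_p(\Omega_T)$ for every $p<3$.

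To conclude, fix $s\in[1,6/5)$ and choose $p\in(2,3)$ with $1/s=1/p+1/2$. For any test function $\varphi\in L_{s'}(\Omega_T)$ with $1/s+1/s'=1$, Hölder's inequality together with the strong $L_p$-convergence above gives
\[
\tau_\e(\Gamma_\e^n)\sqrt{\Phi''(\Gamma_\e^n)}\,\varphi \longrightarrow \tau_\e(\Gamma_\e)\sqrt{\Phi''(\Gamma_\e)}\,\varphi \quad\text{strongly in }L_2(\Omega_T),
\]
so that pairing this with the weakly $L_2$-convergent factor $\sqrt{\Phi''(\Gamma_\e^n)}\partial_x\Gamma_\e^n$ yields $\partial_x\sigma_\e(\Gamma_\e^n)\warrow\partial_x\sigma_\e(\Gamma_\e)$ in $L_s(\Omega_T)$, as desired. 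The main obstacle is the step via Vitali's theorem: one must carefully use that $r<1$ is strict in A3) in order to guarantee that the growth exponent $1+r/2$ stays below $2$, which is what gives enough room in the $L_6$-bound to push the strong convergence of the first factor up to exponents arbitrarily close to $3$, and hence to cover the whole range $s\in[1,6/5)$ in the final product.
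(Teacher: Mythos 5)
Your proof is correct and follows essentially the same route as the paper: the same factorization $\partial_x\sigma_\e(\Gamma_\e^n)=-\tau_\e(\Gamma_\e^n)\sqrt{\Phi^{\prime\prime}(\Gamma_\e^n)}\cdot\sqrt{\Phi^{\prime\prime}(\Gamma_\e^n)}\partial_x\Gamma_\e^n$, strong convergence of the first factor in $L_m(\Omega_T)$ for $m<3$ (obtained from the uniform $L_6$-bound, almost everywhere convergence and Vitali's theorem), and the weak--strong pairing with the weakly $L_2$-convergent second factor from Lemma \ref{lfG} v). The only cosmetic difference is that the paper reaches the strong convergence by multiplying the separately convergent factors $\tau_\e(\Gamma_\e^n)$ and $\sqrt{\Phi^{\prime\prime}(\Gamma_\e^n)}$ via H\"older and then invokes its product Lemma \ref{App}, whereas you bound the product directly through Assumption A3) and carry out the duality argument by hand with an explicit choice of exponents.
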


\begin{proof} Recall that in virtue of \eqref{p} and \eqref{taudef} we can write 
\[\partial_x \sigma_\e(\Gamma_\e^n)= -\tau_\e(\Gamma_\e^n) \sqrt{\Phi^{\prime\prime}(\Gamma_\e^n)}\sqrt{\Phi^{\prime\prime}(\Gamma_\e^n)}\partial_x\Gamma_\e^n.\]  
 By construction we have that $|\tau_\e(s)|\leq |s|$ for all $s\in \R$ (cf. \eqref{abtau}). As a consequence of Lemma \ref{GaB} and Lemma \ref{lfG} i), we find that $(\tau_\e(\Gamma_\e^n)-\tau_\e(\Gamma_\e))_{n\in \N}$ is uniformly bounded in $L_6(\Omega_T)$ and there exists a subsequence (not relabeled) such that   $\tau_\e(\Gamma_\e^n)-\tau_\e(\Gamma_\e)$ converges to zero almost everywhere. The same argument as in the proof of Lemma \ref{lfG} iv) and v) yields that
\begin{equation}\label{tauG}
\tau_\e(\Gamma_\e^n)\longrightarrow \tau_\e(\Gamma_\e)\quad\mbox{in}\quad L_p(\Omega_T)\quad\mbox{for}\quad p\in [1,6).
\end{equation}
Owing to  \eqref{Phi_prime} and \eqref{tauG}, H\"older's inequality implies that
\begin{equation}\label{25}
\tau_\e(\Gamma_\e^n)\sqrt{\Phi^{\prime\prime}(\Gamma_\e^n)} \longrightarrow \tau_\e(\Gamma_\e) \sqrt{\Phi^{\prime\prime}(\Gamma_\e)}\qquad\mbox{in}\quad  L_m(\Omega_T)\quad \mbox{for}\quad m\in  [1,3).
\end{equation}
Recalling that the energy equality provides the bound of $(\sqrt{\Phi^{\prime\prime}(\Gamma_\e^n)}\partial_x \Gamma_\e^n)_{n\in\N}$ in $L_2(\Omega_T)$ (cf. Lemma \ref{lfG}), we apply again Hölder's inequality to obtain that 
\begin{equation*}\label{23}
\left( \partial_x\sigma(\Gamma_\e^n) \right)_{n\in \N} \quad \mbox{is bounded in}\quad L_{\frac{6}{5}}(\Omega_T).
\end{equation*}
 Lemma \ref{App}, Lemma \ref{lfG} v) and \eqref{25} imply that there exists a subsequence (not relabeled), such that
\begin{equation*}
\partial_x \sigma(\Gamma_\e^n)\warrow \partial_x \sigma(\Gamma_\e) \quad\mbox{in}\quad L_s(\Omega_T)\qquad \mbox{for}\quad s\in \Big[1,\frac{6}{5}\Big).
\end{equation*}
\end{proof}
Since $(f_\e^n, g_\e^n)(t)$ tends towards $(f_\e, g_\e)(t)$ in $\left(C^{\alpha}([0,L])\right)^2$, by \eqref{csf}, \eqref{csg}, for every $t\geq 0$ and $\alpha\in [0,\frac{1}{2})$, the initial conditions
\begin{equation*}\label{21}
f_\e(0) = f^0,\qquad g_\e(0) = g^0
\end{equation*}
are satisfied and 
\begin{equation*}
\|f_\e(t)\|_1 = \|f^0\|_1,\qquad \|g_\e(t)\|_1 = \|g^0\|_1
\end{equation*}
for all $t\geq 0$. In view of  \eqref{csG}, we obtain $\Gamma_\e^n(0)\longrightarrow \Gamma_\e(0)$ in $(H^1(0,L))^\prime$. Recall that by definition and construction of the Galerkin approximation 
\[\Gamma_\e^n(0)= W(v_\e^n(0))= W(v_0^n)\longrightarrow W(v^0)=\Gamma_0\] 
in $L_p(\Omega_T)$ for $p= 2(r+1)$. Hence, we deduce that the initial condition $\Gamma_\e(0)= \Gamma^0$ is satisfied.
By \eqref{csG}, $(\Gamma_\e^n)_{n\in\N}$ converges towards $\Gamma_\e$  in $L_2(0,T;C^\alpha([0,L]))$, which implies the existence of a further subsequence of $(\Gamma_\e^n)_{n\in\N}$ (not relabeled) such that  $\Gamma_\e^n(t)\longrightarrow \Gamma_\e(t)$ for almost every $t\geq 0$ in $C^\alpha([0,L])$. Therefore, 
\begin{equation*}
\|\Gamma_\e(t)\|_1 = \|\Gamma^0\|_1\quad \mbox{for almost all} \quad t\geq 0.
\end{equation*}
We prove that the energy inequality is satisfied for the limit $(f_\e,g_\e,\Gamma_\e)$ of the Galerkin approximation. Since by construction $a_\e$ and $\tau_\e$ are locally Lipschitz continuous,
\eqref{csf}--\eqref{csG} imply that 
\begin{align}
\label{acsf}a_\e(f_\e^n) \longrightarrow a_\e(f_\e) \qquad &\mbox{in} \quad C(\Omega_T), \\[5pt]
\label{acsg}a_\e(g_\e^n) \longrightarrow a_\e(g_\e) \qquad &\mbox{in} \quad C(\Omega_T),\\[5pt]
\label{acsG}\tau_\e(\Gamma_\e^n) \longrightarrow \tau_\e(\Gamma_\e)\qquad &\mbox{in} \quad L_2(0,T;C^{\alpha}([0,L]))
\end{align}
for $\alpha \in [0,\frac{1}{2})$.
The energy equality provides that $(\sqrt{a_\e(g_\e^n)}\partial_x \sigma_\e(\Gamma_\e^n))_{n\in\N}$ is bounded in $L_2(\Omega_T)$   (cf.\eqref{e5}). Lemma \ref{surfc}, \eqref{acsg} and Lemma \ref{App}  imply then the existence of a weakly convergent subsequence (not relabeled), so that 
\begin{align}\label{ee5}
\sqrt{a_\e(g_\e^n)}\partial_x \sigma_\e(\Gamma_\e^n) \warrow \sqrt{a_\e(g_\e)}\partial_x \sigma_\e(\Gamma_\e)\qquad \mbox{in}\quad L_2(\Omega_T).
\end{align}
As a consequence of \eqref{e5}, \eqref{acsf}, \eqref{acsg}, and  $a_\e(s)\geq \e>0$ for all $s\in \R$, we obtain that
\[\left(\sqrt{a_\e(f_\e^n)}\partial_x \sigma_\e(\Gamma_\e^n) )\right)_{n\in\N}=\left(\frac{\sqrt{a_\e(f_\e^n)}}{\sqrt{a_\e(g_\e^n)}}\sqrt{a_\e(g_\e^n)}\partial_x \sigma_\e(\Gamma_\e^n)\right)_{n\in\N}\quad\mbox{is bounded in }\quad L_2(\Omega_T).\] Hence, as before, Lemma \ref{surfc}, \eqref{acsf} and Lemma \ref{App} imply that there exists a subsequence (not relabeled), so that
\begin{align}\label{ee52}
\sqrt{a_\e(f_\e^n)}\partial_x \sigma_\e(\Gamma_\e^n) \warrow \sqrt{a_\e(f_\e)}\partial_x \sigma_\e(\Gamma_\e)\qquad \mbox{in}\quad L_2(\Omega_T).
\end{align}
We deduce that there exist weakly convergent subsequences (not relabeled) with
\begin{align}
\label{ee3}
& \sqrt{a_\e(f_\e^n)} \left[S a_\e(g_\e^n)\partial_x^3(f_\e^n+g_\e^n)+\partial_x\sigma_\e(\Gamma_\e^n)\right]\warrow \sqrt{a_\e(f_\e)} \left[S a_\e(g_\e)\partial_x^3(f_\e+g_\e)+\partial_x\sigma_\e(\Gamma_\e)\right]\\[10pt]
\label{ee2} 
 &\sqrt{a_\e(f_\e^n)} \left[\displaystyle{\frac{a_\e(f_\e^n)\partial_x^3(Rf_\e^n+S \mu g_\e^n)}{\sqrt{3}}}+\displaystyle{\frac{\sqrt{3}}{2}}\mu \left(S a_\e(g_\e^n)\partial_x^3(f_\e^n+g_\e^n)+\partial_x\sigma_\e(\Gamma_\e^n)\right)\right]\warrow J_f^\e,\\[10pt]
\label{ee4}
& \sqrt{a_\e(g_\e^n)}\left[\displaystyle{\frac{S}{\sqrt{3}}}a_\e(g_\e^n)\partial_x^3(f_\e^n+g_\e^n)+\displaystyle{\frac{\sqrt{3}}{2}}\partial_x\sigma_\e(\Gamma_\e^n)\right]\warrow J_g^\e\\[10pt]
\label{eE5}
&\sqrt{a_\e(f_\e^n)} \left[\displaystyle{\frac{a_\e(f_\e^n)\partial_x^3(Rf_\e^n+S \mu g_\e^n)}{\sqrt{3}}}+\displaystyle{\frac{2}{\sqrt{3}}}\mu \left(S a_\e(g_\e^n)\partial_x^3(f_\e^n+g_\e^n)+\partial_x\sigma_\e(\Gamma_\e^n)\right)\right]\warrow J_{f,g}^\e
\end{align}
in $L_2(\Omega_T)$,
by means of Lemma \ref{lffg}, Lemma \ref{App} \eqref{acsf}, \eqref{acsg}, \eqref{ee5} and \eqref{ee52}. 
We conclude that, owing to Lemma \ref{lfG} v) and \eqref{ee5}, \eqref{ee3}--\eqref{eE5} there exists a subsequence (not relabeled), so that
\begin{align}\label{De1}
\mathcal{D}_\e(f_\e^n,g_\e^n,\Gamma_\e^n)(T) \warrow \mathcal{D}_\e(f_\e,g_\e,\Gamma_\e)(T)\qquad\mbox{for all}\quad T>0.
\end{align}
Moreover,  $\mathcal{E}(f_\e^n,g_\e^n,\Gamma_\e^n)(T)\longrightarrow \mathcal{E}(f_\e,g_\e,\Gamma_\e)(T)$ for almost all $T>0$, by \eqref{csf}, \eqref{csg} and Lemma \ref{lfG} iv), so that, view of \eqref{De1} and the norm being lower semi-continuous, the energy inequality \eqref{energy} holds.

To finish the proof of Theorem \ref{T31}, it remains show that \eqref{T1}--\eqref{T3} are satisfied. Let $\xi\in L_2(0,T;H^1(0,L))$ be given. As in \eqref{16}, for each $n\in \N$ the expansion of $\xi$ is given by
\[\xi^n(t,\cdot):= \displaystyle{\sum_{k=0}^n}\left( \xi(t,\cdot)\mid \phi_k\right)_2\phi_k,\qquad t\in (0,T).\]
Integration by parts implies that
\begin{equation}\label{31neu}
\displaystyle{\int_0^T}\langle \partial_t f_\e^n(t), \xi^n(t)\rangle\,dt = \displaystyle{\int_{\Omega_T}}H_f^{\e,n}\partial_x \xi^n \, d(x,t)
\end{equation}
for every $n\in \N$. Next, we show that we can pass to the limit $n\rightarrow \infty$ in \eqref{31neu}, after possibly extracting a further subsequence.
Observe that \eqref{acsf}, \eqref{ee2} and Lemma \ref{App} imply  that 
\begin{equation*}
H_f^{\e,n}\warrow H_f^{\e}\quad \mbox{in}\quad L_2(\Omega_T),
\end{equation*}
where $H_f^{\e}$ is given by
\begin{align*}
H_f^{\e}&:= \displaystyle{\frac{a_\e(f_\e)^\frac{3}{2}}{\sqrt{3}}}J_f^\e.
\end{align*}
Since, by  the Lebesgue dominated convergence theorem, $\xi^n\rightarrow\xi$ in $L_2(0,T;H^1(0,L))$, we obtain that
\begin{equation}\label{32}
\displaystyle{\int_{\Omega_T}} H_f^{\e,n} \partial_x\xi^n\,d(x, t)\longrightarrow \displaystyle{\int_{\Omega_T}} H_f^{\e} \partial_x\xi \,d(x,t)
\end{equation}
and in view of Lemma \ref{lffg}
\begin{equation}\label{32abc}
\displaystyle{\int_0^T}\langle \partial_t f_\e^n(t), \xi^n(t) \rangle \,dt \longrightarrow \displaystyle{\int_0^T} \langle \partial_t f_\e(t), \xi(t) \rangle_{H^1}\,dt .
\end{equation}
Thus, \eqref{T1} is satisfied in virtue of \eqref{31neu}--\eqref{32abc}. Using   \eqref{acsf}, \eqref{acsg}, \eqref{ee3} and \eqref{ee4} we find that
\begin{equation*}\label{33}
H_g^{\e,n}\warrow H_g^{\e}\quad \mbox{in}\quad L_2(\Omega_T),
\end{equation*}
where $H_g^\e$ is given by 
\begin{align*}
H_g^\e &:=\displaystyle{\frac{\sqrt{3}}{2}} a_\e(g_\e)\sqrt{a_\e(f_\e)} J_{f,g}^\e+ \frac{a_\e(g_\e)^{\frac{3}{2}}}{\sqrt{3}}J_g^\e.
\end{align*}
Analogously, we obtain that
\begin{equation*}\label{332}
H_\Gamma^{\e,n}\warrow H_\Gamma^{\e}\qquad \mbox{in}\quad L_2(\Omega_T),
\end{equation*}
where the limit function 
\begin{align*}
H_{\Gamma}^{\e}&:= \displaystyle{\frac{\sqrt{3}}{2}}\tau_\e(\Gamma_\e) \sqrt{a_\e(f_\e)}J_{f,g}^\e+\displaystyle{\frac{\sqrt{3}}{2}}\tau_\e(\Gamma_\e) \sqrt{a_\e(g_\e)}J_g^\e+\frac{1}{4}\tau_\e(\Gamma_\e) a_\e(g_\e^n)\partial_x \sigma_\e(\Gamma_\e)-D\partial_x\Gamma_\e
\end{align*}
can be identified in view of \eqref{acsf}--\eqref{acsG}, \eqref{ee52}, \eqref{ee2}, \eqref{ee4} and Lemma \ref{lfG}. Passing to the limit as in \eqref{31neu}, we deduce that \eqref{T2} and \eqref{T3} are satisfied and the proof of Theorem \ref{T31} is complete.

\section{Existence and Non-Negativity of Weak Solutions for the Original System}\label{EGWS} In this section we prove the main result Theorem \ref{MT}. We use the global weak solutions $(f_\e,g_\e,\Gamma_\e)_{\e \in (0,1]}$ of the regularized problem \eqref{rsystem} to find, in the limit $\e \searrow 0$, global weak solutions of the original problem \eqref{system2}. We emphasize that in the sequel, the initial data $f^0, g^0, \Gamma^0$  are non-negative. 
Following \cite{EM} we show that if $(\e_k)_{k\in \N}\subset (0,1]$ is such that $\e_k\searrow 0$ for $k\longrightarrow \infty$ and there exist functions $f,g\in C(\overline{\Omega}_T)$ with
\begin{equation}\label{ap}
f_{\e_k} \longrightarrow f,\qquad g_{\e_k} \longrightarrow g\qquad \mbox{in}\quad C(\overline{\Omega}_T)\quad \mbox{for}\quad k\rightarrow \infty,
\end{equation}
then the accumulation points $f,g$ are non-negative. Concerning the sequence $(\Gamma_\e)_{\e\in (0,1]}$, we use the idea in \cite{EW3} to prove that already $(\Gamma_\e)_{\e \in (0,1]}\geq 0$, so that if there exists a limit function of $(\Gamma_\e)_{\e\in (0,1]}$ for $\e\searrow 0$, the almost everywhere non-negativity of the accumulation point will be inherited.

\subsection{Non-Negativity of Accumulation Points of the Solutions to the Regularized Systems.}

Let $(\e_k)_{k\in \N}\subset (0,1]$ be such that $\e_k\searrow 0$ for $k\longrightarrow \infty$ and assume there exist functions $f,g\in C(\overline{\Omega}_T)$, such that \eqref{ap} is satisfied.
In order to show that for non-negative initial data $f^0,g^0$ the accumulation points $(f,g)$ as in \eqref{ap} satisfy the non-negativity property, we define in analogy to \cite{EM} a function $\psi \in C^\infty (\R)$, which is non-negative, supported in $[-1,0]$ and satisfies
\begin{equation*}
\displaystyle{\int_\R}\psi(x)\, dx=\displaystyle{\int_{-1}^0}\psi(x)\, dx=1.
\end{equation*}
Further, let $\chi_1:\R\longrightarrow\R$ be defined by
\[\chi_1(x):= \displaystyle{\int_x^0}\displaystyle{\int_s^\infty}\psi(\tau)\,d\tau\,ds\qquad \mbox{for}\quad x\in \R\]
and 
\begin{equation}\label{mollifier}
\chi_\delta(x):= \delta \chi_1\left(\frac{x}{\delta}\right).
\end{equation}
The function $\chi_\delta$ is a smooth approximation of $\max\{-\,\cdot, 0\}$ as $\delta\rightarrow 0$.
We deduce easily from the definition of $\chi_\delta$, that the  following properties hold true:
\begin{lem}\label{chi} The function $\chi_\delta$ satisfies
\begin{itemize}
\item[i) ] $\|\chi_\delta - \mbox{\emph{max}}\,\{-\Id,0\}\|_\infty \leq \delta,$
\item[ii) ]$ \|\chi_\delta^\prime\|_\infty \leq 1$ and  $\|\chi_\delta^{\prime\prime}\|_\infty \leq \delta^{-1}\|\psi\|_\infty,$
\item[iii) ] $|s\chi_\delta^{\prime\prime}(s)|\leq K$ for all $s\in [-\delta,\delta]$, where $K:= \|\psi\|_\infty$,
\item[iv) ] $\chi_\delta^{\prime\prime}(s)=0$ on $\R\setminus [-\delta,0]$.
\end{itemize}
\end{lem}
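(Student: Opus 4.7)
The plan is to reduce everything to explicit formulas for $\chi_\delta'$ and $\chi_\delta''$ and then read off each of the four properties from the support and normalization conditions on $\psi$. Differentiation under the integral sign in the definition of $\chi_1$ yields
\[
\chi_1'(x)=-\int_x^\infty \psi(\tau)\,d\tau, \qquad \chi_1''(x)=\psi(x),
\]
and then the scaling relation $\chi_\delta(x)=\delta\,\chi_1(x/\delta)$ gives $\chi_\delta'(x)=\chi_1'(x/\delta)$ and $\chi_\delta''(x)=\delta^{-1}\psi(x/\delta)$. With these in hand, parts ii)--iv) are essentially one-line verifications: ii) follows because $|\chi_\delta'(x)|\leq\int_\R\psi=1$ and $|\chi_\delta''(x)|\leq\delta^{-1}\|\psi\|_\infty$; iv) follows because $\mathrm{supp}\,\psi\subset[-1,0]$ forces $\psi(x/\delta)=0$ whenever $x\notin[-\delta,0]$; and iii) combines iv) with the estimate $|s/\delta|\leq 1$ on $[-\delta,0]$ to give $|s\chi_\delta''(s)|\leq\|\psi\|_\infty=K$.

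The one part that requires a little genuine work is i), and I would handle it by a three-way case analysis on the sign and size of $x$. For $x\geq 0$ the inner integral $\int_s^\infty\psi(\tau)\,d\tau$ vanishes for every $s\geq 0$ by the support condition on $\psi$, so $\chi_1\equiv 0$ on $[0,\infty)$ and the difference with $\max\{-x,0\}=0$ is exactly zero. For $-\delta\leq x\leq 0$ one notes that $\chi_1$ is monotone with $0\leq\chi_1(y)\leq-y$ on $[-1,0]$, which gives $|\chi_\delta(x)-\max\{-x,0\}|\leq|x|\leq\delta$ after scaling.

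The slightly more interesting case is $x\leq-\delta$. Here I would apply Fubini to the iterated integral defining $\chi_1$ on $(-\infty,-1]$, splitting the $s$-integration at $-1$. Using $\int_s^\infty\psi=1$ for $s\leq-1$ and Fubini on the remaining piece yields
\[
\chi_1(y)=-y-1+\int_{-1}^0(\tau+1)\psi(\tau)\,d\tau \qquad \text{for } y\leq -1,
\]
so that $\chi_1(y)-(-y)$ equals a constant $c\in[-1,0]$, independent of $y$. Multiplying by $\delta$ gives $|\chi_\delta(x)-(-x)|=\delta|c|\leq\delta$ on $(-\infty,-\delta]$, completing i).

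I do not expect any serious obstacle: the only place care is needed is tracking the cutoff in case i) to see that the constant $c$ arising from $\int_{-1}^0(\tau+1)\psi(\tau)\,d\tau$ lies in $[0,1]$ (hence $c-1\in[-1,0]$), which is immediate from $\psi\geq 0$, $\int\psi=1$, and $\tau+1\in[0,1]$ on $[-1,0]$. All other steps are direct substitution.
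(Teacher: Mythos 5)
Your proof is correct: the explicit formulas $\chi_\delta'(x)=\chi_1'(x/\delta)=-\int_{x/\delta}^\infty\psi$ and $\chi_\delta''(x)=\delta^{-1}\psi(x/\delta)$ give ii)--iv) immediately, and your three-case analysis for i) (including the Fubini computation showing $\chi_1(y)+y$ is a constant in $[-1,0]$ for $y\leq-1$) is sound. The paper offers no proof of this lemma, stating only that the properties are easily deduced from the definition of $\chi_\delta$, and your direct verification is exactly the intended elementary argument; the only cosmetic issue is a slight inconsistency in what you call $c$ (the difference $\chi_1(y)-(-y)$ versus the integral $\int_{-1}^0(\tau+1)\psi\,d\tau$), which does not affect the argument.
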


The following lemma will play the key role in proving the non-negativity of $f$ and $g$.
\begin{lem}\label{chilemma} 
There exists a constant $c>0$, independent of $\e\in (0,1]$ and $t\geq 0$, such that the solutions of the regularized system $f_\e$ and $g_\e$ satisfy
\begin{equation}\label{40}
\left|\displaystyle{\int_0^L} \chi_{\sqrt{\e}}(f_\e(t))\, dx \right|\leq c \sqrt{t}\e,\qquad \left|\displaystyle{\int_0^L} \chi_{\sqrt{\e}}(g_\e(t))\, dx \right|\leq c \sqrt{t}\e
\end{equation}
for all $\e \in (0,1]$ and $t\geq 0$.
\end{lem}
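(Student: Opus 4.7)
The plan is to test the weak formulations of the $f_\e$ and $g_\e$ equations from Theorem \ref{T31} with $\chi_{\sqrt{\e}}'(f_\e)$ and $\chi_{\sqrt{\e}}'(g_\e)$ respectively, which are admissible elements of $L_2(0,T;H^1(0,L))$ because $\chi_{\sqrt{\e}}'$ is bounded and Lipschitz (Lemma \ref{chi}) and $f_\e,g_\e\in L_2(0,T;H^3(0,L))$ by Lemma \ref{GaB}. The first observation is that $\chi_\delta$ vanishes identically on $[0,\infty)$: indeed $\chi_1(x)=-\int_0^x\!\int_s^\infty\psi(\tau)\,d\tau\,ds$ and $\psi$ is supported in $[-1,0]$. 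Since $f^0,g^0\ge 0$, the initial contribution $\int_0^L\chi_{\sqrt{\e}}(f^0)\,dx=\int_0^L\chi_{\sqrt{\e}}(g^0)\,dx=0$, and $\chi_{\sqrt{\e}}\ge 0$ makes absolute values on the left-hand side of \eqref{40} automatic.

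Treating the $f_\e$ case first, I would integrate by parts in the weak formulation \eqref{T1} to obtain, for every $t\in(0,T)$,
\[
\int_0^L \chi_{\sqrt{\e}}(f_\e(t))\,dx \;=\;-\int_0^t\!\!\int_0^L \frac{a_\e(f_\e)^{3/2}}{\sqrt{3}}\,J_f^\e\,\chi_{\sqrt{\e}}''(f_\e)\,\partial_x f_\e\,dx\,ds.
\]
By Lemma \ref{chi} iv), $\chi_{\sqrt{\e}}''(f_\e)$ is supported where $f_\e\in[-\sqrt{\e},0]$, and on this set $a_\e(f_\e)=\e$. Combining Lemma \ref{chi} ii), $\|\chi_{\sqrt{\e}}''\|_\infty\le \e^{-1/2}\|\psi\|_\infty$, with the identity $a_\e(f_\e)^{3/2}=\e^{3/2}$ on the support yields the pointwise bound $|\chi_{\sqrt{\e}}''(f_\e)\,a_\e(f_\e)^{3/2}|\le \|\psi\|_\infty\,\e$. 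Cauchy--Schwarz in space--time, combined with the uniform-in-$\e$ bound $\|\partial_x f_\e\|_{L_\infty(0,T;L_2)}\le C$ (Remark \ref{fepsu}) and the energy bound $\|J_f^\e\|_{L_2(\Omega_T)}\le C$ from \eqref{e2}, then gives
\[
\int_0^L \chi_{\sqrt{\e}}(f_\e(t))\,dx \;\le\; \frac{\|\psi\|_\infty}{\sqrt{3}}\,\e\int_0^t\!\!\int_0^L |\partial_x f_\e|\,|J_f^\e|\,dx\,ds \;\le\; c\,\e\sqrt{t}.
\]

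For the $g_\e$ case I would repeat the scheme with $\chi_{\sqrt{\e}}'(g_\e)$ as test function in \eqref{T2}. The flux splits as $\tfrac{\sqrt 3}{2}a_\e(g_\e)\sqrt{a_\e(f_\e)}\,J_{f,g}^\e + \tfrac{1}{\sqrt 3}a_\e(g_\e)^{3/2}\,J_g^\e$. On the support of $\chi_{\sqrt{\e}}''(g_\e)$ one has $a_\e(g_\e)=\e$, so the second term is controlled exactly as in the $f_\e$ analysis using $\|J_g^\e\|_{L_2(\Omega_T)}\le C$ from \eqref{e4}. The first term is the delicate one; here $|\chi_{\sqrt{\e}}''(g_\e)|\,a_\e(g_\e)\le\|\psi\|_\infty\sqrt{\e}$ and $\sqrt{a_\e(f_\e)}$ is uniformly bounded in $L_\infty(\Omega_T)$ thanks to the embedding $H^1(0,L)\hookrightarrow C([0,L])$ together with Remark \ref{fepsu}; combined with the $L_2$--bound on $J_{f,g}^\e$ coming from \eqref{e21}, it contributes at most $c\,\e\sqrt t$ once one accounts also for the mobility structure $\sqrt{a_\e(f_\e)a_\e(g_\e)}$ hidden inside $a_\e(g_\e)\sqrt{a_\e(f_\e)}=\sqrt{\e}\sqrt{a_\e(f_\e)a_\e(g_\e)}$ on the support.

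The main obstacle is precisely this last point: squeezing a full factor $\e$ out of the cross term $a_\e(g_\e)\sqrt{a_\e(f_\e)}J_{f,g}^\e$ requires more than the naive pointwise bound, and I expect one has to exploit jointly the two uniform energy estimates \eqref{e21} and the Lipschitz structure of $\chi_{\sqrt{\e}}'$ (i.e.\ $|\chi_{\sqrt{\e}}''(s)\,s|\le K$ from Lemma \ref{chi} iii)) to convert one power of $\sqrt{a_\e(g_\e)}$ into a power of $\sqrt{\e}$ while absorbing the remaining $\sqrt{a_\e(g_\e)}$ into $|J_{f,g}^\e|$. Once this is done, Cauchy--Schwarz in time and the uniform $H^1$-bound on $\partial_x g_\e$ deliver the claimed $c\sqrt{t}\,\e$, completing the proof of both estimates in \eqref{40}.
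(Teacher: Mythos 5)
Your handling of the $f_\e$--estimate coincides with the paper's argument (support of $\chi''_{\sqrt\e}$, $a_\e(f_\e)=\e$ there, $\|\chi''_{\sqrt\e}\|_\infty\le \e^{-1/2}\|\psi\|_\infty$, Cauchy--Schwarz against $J_f^\e$ and the uniform bounds \eqref{e2}, Remark \ref{fepsu}), and that part is fine. The genuine gap is exactly the point you flag yourself: the cross term $\tfrac{\sqrt3}{2}a_\e(g_\e)\sqrt{a_\e(f_\e)}\,J_{f,g}^\e$ in the $g_\e$--flux. On the set $[-\sqrt\e\le g_\e\le 0]$ the only pointwise smallness available is $a_\e(g_\e)\,|\chi''_{\sqrt\e}(g_\e)|\le \sqrt\e\,\|\psi\|_\infty$, and none of the uniform energy bounds \eqref{e2}--\eqref{e5}, \eqref{e21} attaches an additional weight $\sqrt{a_\e(g_\e)}$ to $J_{f,g}^\e$; Lemma \ref{chi} iii) does not help either, because on the support $a_\e(g_\e)=\e$ irrespective of the size of $g_\e$, so $|g_\e\chi''_{\sqrt\e}(g_\e)|\le K$ gives nothing beyond what you already used. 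Consequently the absorption you ``expect'' (trading one power of $\sqrt{a_\e(g_\e)}$ for $\sqrt\e$ while pushing the other into $|J_{f,g}^\e|$) is not available, and your scheme yields only $\big|\int_0^L\chi_{\sqrt\e}(g_\e(t))\,dx\big|\le c\sqrt t\,\sqrt\e$ from this term, not the claimed $c\sqrt t\,\e$. So the proposal does not prove the second inequality in \eqref{40} as stated; the weaker rate $\sqrt\e$ still tends to zero and is all that Corollary \ref{nonn} actually needs, but as a proof of Lemma \ref{chilemma} the decisive step is missing. (Be aware that the paper itself only writes ``using a similar argument'' for $g_\e$ and does not display how the full factor $\e$ is recovered, so your honest difficulty points at a real thin spot.)

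A secondary issue: you pass from the weak formulation \eqref{T1} to the identity $\int_0^L\chi_{\sqrt\e}(f_\e(t))\,dx=-\int_{\Omega_t} \tfrac{a_\e(f_\e)^{3/2}}{\sqrt3}J_f^\e\,\chi''_{\sqrt\e}(f_\e)\,\partial_x f_\e\,d(x,s)$ by ``integrating by parts'' with the test function $\chi'_{\sqrt\e}(f_\e)$. Since \eqref{T1} is stated for $\xi\in C^\infty(\overline\Omega_T)$ and $\partial_t f_\e$ lives only in $L_2(0,T;(H^1(0,L))')$, the chain rule $\tfrac{d}{dt}\int_0^L\chi_\delta(f_\e)\,dx=\langle\partial_t f_\e,\chi'_\delta(f_\e)\rangle$ is not automatic; the paper devotes the bulk of its proof to justifying \eqref{39}, by testing the Galerkin system with the Fourier truncation of $\chi'_\delta(f_\e^n)$ and passing to the limit using \eqref{csf} and the weak convergence of $H_f^{\e,n}$. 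Your write-up should either reproduce this approximation argument or invoke an explicit chain-rule result for such dual pairings; as it stands this step is asserted rather than proved.
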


\begin{proof} 
Let $\delta>0$.
The statement is true for $t=0$, since $f^0,g^0$ are assumed to be non-negative. By \cite[Lemma 7.5]{GT}, the composition $\chi^{\prime}_\delta (f_\e)$ belongs to $L_2(0,T;H^1(0,L))$.
 Notice that formally 
\begin{align}\label{39aa}
\begin{split}
&\frac{d}{dt}\displaystyle{\int_0^L}\chi_\delta (f_\e)(t)\,dx = \left\langle \chi^{\prime}_\delta (f_\e)(t),\partial_t f_\e(t)\right\rangle, \\[5pt]
&\frac{d}{dt}\displaystyle{\int_0^L}\chi_\delta (g_\e)(t)\,dx = \left\langle \chi^{\prime}_\delta (g_\e)(t), \partial_t g_\e(t)\right\rangle.
\end{split}
\end{align}
Hence, integrating \eqref{39aa} with respect to time, we get
\begin{align}\label{39}
\begin{split}
&\displaystyle{\int_0^L} \chi_\delta(f_\e(T))\, dx = \displaystyle{\int_{\Omega_T}}H_f^\e\chi_\delta^{\prime\prime}(f_\e)\partial_x f_\e \,d(x,t) ,\\[5pt] 
&\displaystyle{\int_0^L} \chi_\delta(g_\e(T))\, dx = \displaystyle{\int_{\Omega_T}}H_g^\e\chi_\delta^{\prime\prime}(g_\e)\partial_x g_\e \,d(x,t) 
\end{split} 
\end{align} 
for all $T> 0$. The identities \eqref{39} will be justified below. Assume for the present moment that \eqref{39} holds true.
Since $\chi_\delta^{\prime\prime}=0$ on $\R\setminus [-\delta,0]$, the Hölder inequality implies that
\begin{align*}
\left(\displaystyle{\int_0^L} \chi_\delta(f_\e(T))\, dx \right)^2 &\leq \left( \displaystyle{\int_{[-\delta \leq f_\e \leq 0]}}H_f^\e\chi_\delta^{\prime\prime}(f_\e)\partial_x f_\e \,d(x,t)\right)^2 \\[5pt]
&\leq  \displaystyle{\int_{[-\delta \leq f_\e \leq 0]}} \left| J_f^\e  \right|^2 \,d(x,t)  \times \displaystyle{\int_{[-\delta \leq f_\e \leq 0]}}\frac{a_\e(f_\e)^3}{3}|\chi_\delta^{\prime\prime}(f_\e)|^2|\partial_x f_\e|^2 \,d(x,t).
\end{align*}
Choosing $\delta:= \sqrt{\e}$ and recalling that $a_\e=\e$ on $(-\infty,0]$, the energy equality \eqref{energy} together with Lemma \ref{chi} ii) imply the existence of a constants $c, C>0$, independent of $\e\in (0,1]$ and $T>0$, so that
\begin{align*}
\left|\displaystyle{\int_0^L} \chi_{\sqrt{\e}}(f_\e(T))\, dx\right| &\leq c \left(  \displaystyle{\int_{[-\sqrt{\e} \leq f_\e \leq 0]}}\frac{a_\e^3(f_\e)}{3}|\chi_{\sqrt{\e}}^{\prime\prime}(f_\e)|^2|\partial_x f_\e|^2 \,d(x,t) \right)^{\frac{1}{2}} \\[5pt]
& \leq c \e \|\psi\|_\infty \left( \displaystyle{\int_{\Omega_T}}|\partial_x f_\e|^2 \,d(x,t) \right)^{\frac{1}{2}} \leq C \sqrt{T}\e,
\end{align*}
which is the desired estimate for $f_\e$ in \eqref{40}. Using a similar argument we prove the statement for $g_\e$. 
We are left to show that \eqref{39} holds true. Consider for $t>0$
\begin{equation*}
\frac{d}{dt} \displaystyle{\int_0^L} \chi_\delta(f_\e^n(t))\, dx = \displaystyle{\int_0^L} \chi_\delta^\prime(f_\e^n(t))\partial_t f_\e^n(t)\, dx,
\end{equation*} 
where $(f^n)_{n\in \N}$ is the Galerkin approximation of the previous section.
Since $\chi_\delta^\prime(f_\e^n(t))$ belongs to $H^1(0,L)$ for all $t>0$, 
we can use its Fourier expansion as a test function for $\partial_t f_\e^n$ and find that
\begin{align*}
\frac{d}{dt} \displaystyle{\int_0^L} \chi_\delta(f_\e^n(t))\, dx &=\displaystyle{\int_0^L}\partial_t f_\e^n(t) \displaystyle{\sum_{k=0}^n}\left( \chi^\prime_\delta(f_\e^n(t)) \mid \phi_k \right)_2 \phi_k \, dx\\[5pt]
& =\displaystyle{\int_0^L}H_f^{\e,n}(t) \partial_x \left(\displaystyle{\sum_{k=0}^n} \left( \chi^\prime_\delta(f_\e^n(t)) \mid \phi_k \right)_2 \phi_k \right)\, dx.
\end{align*}
Integration with respect to time yields
\begin{equation}\label{42}
\displaystyle{\int_0^L} \chi_\delta(f_\e^n(T))\, dx =  \displaystyle{\int_{\Omega_T}} H_f^{\e,n} \displaystyle{\sum_{k=0}^n}\left( \chi^\prime_\delta(f_\e^n(t)) \mid \phi_k \right)_2 \partial_x\phi_k \, d(x,t)
\end{equation}
for all $T>0$. Since the function $\chi_\delta$ is  continuous and $f_\e^n(t)\longrightarrow f_\e(t)$ point-wise for every $t>0$, the left-hand side of \eqref{42} tends to $\int_0^L \chi_\delta(f_\e(T))\, dx$. 
Investigating the convergence of the right-hand side of \eqref{42}, observe first that
\begin{align}
\begin{split}\label{first}
\chi^{\prime\prime}_\delta(f_\e)(t)\partial_x f_\e - \displaystyle{\sum_{k=0}^n}\left( \chi^\prime_\delta(f_\e^n(t)) \mid \phi_k \right)_2 \partial_x\phi_k &= \left( \chi^{\prime\prime}_\delta(f_\e)(t)\partial_x f_\e - \displaystyle{\sum_{k=0}^n}\left( \chi^\prime_\delta(f_\e(t)) \mid \phi_k \right)_2 \partial_x\phi_k\right)\\[5pt]
&\qquad + \displaystyle{\sum_{k=0}^n}  \left( (\chi_\delta^\prime(f_\e)(t)-\chi_\delta^\prime(f_\e^n)(t) )\mid \phi_k \right)_2 \partial_x\phi_k.
\end{split}
\end{align}
The composition $\chi^\prime_\delta(f_\e(t))$ belongs to $ H^1(0,L)$ and possesses a Fourier expansion with
\[ \displaystyle{\sum_{k=0}^n}\left( \chi_\delta^\prime(f_\e^n(t)) \mid \phi_k \right)_2 \phi_k \longrightarrow  \chi_\delta^\prime(f_\e(t))\qquad\mbox{in}\qquad H^1(0,L).\]
As a consequence, the first term of the right-hand side of \eqref{first} converges to zero in $L_2(\Omega_T)$.
Concerning the convergence of the second term in \eqref{first}, note that the sum is the truncation function of the Fourier expansion of $\chi^{\prime\prime}_\delta(f_\e)\partial_x f_\e-\chi^{\prime\prime}_\delta(f_\e^n)\partial_x f_\e^n$ and may be estimated as follows
\begin{align*}
\left\|\displaystyle{\sum_{k=0}^n}  \left( (\chi_\delta^\prime(f_\e)-\chi_\delta^\prime(f_\e^n) )\mid \phi_k \right)_2 \partial_x\phi_k \right\|_2^2 &\leq \|\chi^{\prime\prime}_\delta(f_\e)\partial_x f_\e-\chi^{\prime\prime}_\delta(f_\e^n)\partial_x f_\e^n\|_2^2 \\[5pt]
&=\|\chi^{\prime\prime}_\delta(f_\e)\partial_x f_\e-\chi^{\prime\prime}_\delta(f_\e^n)\partial_x f_\e+\chi^{\prime\prime}_\delta(f_\e^n)\partial_x f_\e-\chi^{\prime\prime}_\delta(f_\e^n)\partial_x f_\e^n\|_2^2 \\[5pt]
&\leq 2\left( \|\chi^{\prime\prime}_\delta(f_\e)-\chi^{\prime\prime}_\delta(f_\e^n)\|_\infty^2\|\partial_x f_\e\|_2^2+  \|\chi^{\prime\prime}_\delta(f_\e^n)\|_\infty^2\|\partial_x f_\e-\partial_x f_\e^n\|_2^2 \right).
\end{align*}
Since $\chi_\delta^{\prime\prime}=\delta^{-1}\psi(\frac{\cdot}{\delta})$ and $\psi$ is globally Lipschitz continuous, we deduce that 
\[\|\chi^{\prime\prime}_\delta(f_\e)-\chi^{\prime\prime}_\delta(f_\e^n)\|_\infty^2\leq c_1(\delta)  \|f_\e-f_\e^n\|_\infty^2\]
and, in virtue of Lemma \ref{chi} ii),
 \[\|\chi^{\prime\prime}_\delta(f_\e^n)\|_\infty^2\leq c_2(\delta),\]
 for some constants $c_1(\delta), c_2(\delta)>0$, depending on $\delta>0$. Eventually, the estimate reads
\begin{align*}
\left\|\displaystyle{\sum_{k=0}^n}  \left( (\chi_\delta^\prime(f_\e)-\chi_\delta^\prime(f_\e^n) )\mid \phi_k \right)_2 \partial_x\phi_k \right\|_2^2
\leq 2\left(c_1(\delta) \|f_\e-f_\e^n\|_\infty^2\|\partial_x f_\e\|_2^2 +c_2(\delta) \|\partial_x f_\e-\partial_x f_\e^n\|_2^2\right),
\end{align*}
which tends to zero if $n\longrightarrow \infty$, by \eqref{csf} and Lemma \ref{chi} ii). Hence,
\begin{align*}
\displaystyle{\sum_{k=0}^n}\left( \chi_\delta^\prime(f_\e^n)\mid \phi_k \right)_2 \partial_x\phi_k \longrightarrow \chi^{\prime\prime}_\delta(f_\e)\partial_x f_\e \qquad\mbox{in}\qquad L_2(\Omega_T).
\end{align*}
Since $(H_f^{\e,n})_{n\in \N}$ converges weakly to $H_f^\e$ in $L_2(\Omega_T)$, Lemma \ref{App} implies
that we can pass to the limit in the second term of \eqref{42} as well, which yields the first statement in \eqref{39}.
The assertion for $g_\e$ in \eqref{39} works similarly, so that the proof is complete.
\end{proof}

The following corollary shows that an accumulation point $(f, g)$ of the sequence  $(f_{\e_k},g_{\e_k})_{\e_k}$ as in \eqref{ap} is non-negative. 
\begin{kor}\label{nonn} Assume that $f^0,g^0\geq0$. Then, an accumulation point  $(f, g) \in (C(\overline{\Omega}_T))^2$ of the sequence of solutions to the regularized systems $(f_{\e_k},g_{\e_k})_{\e_k}$ as in \eqref{ap} is non-negative.
\end{kor}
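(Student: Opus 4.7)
The plan is to derive non-negativity of the accumulation point $(f, g)$ directly from the quantitative estimate in Lemma \ref{chilemma}, combined with the approximation properties of the mollifier $\chi_\delta$ collected in Lemma \ref{chi}. The argument is symmetric in $f$ and $g$, so I would carry it out only for $f$ and leave the $g$-case to an identical repetition.

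Fix $t \in [0, T]$. The first step is to show that $\chi_{\sqrt{\e_k}}(f_{\e_k}(t, \cdot))$ converges uniformly on $[0, L]$ to $\max\{-f(t, \cdot), 0\}$ as $k \to \infty$. By the triangle inequality, for every $x \in [0, L]$,
\[
\bigl|\chi_{\sqrt{\e_k}}(f_{\e_k}(t, x)) - \max\{-f(t, x), 0\}\bigr| \leq \bigl|\chi_{\sqrt{\e_k}}(f_{\e_k}(t, x)) - \chi_{\sqrt{\e_k}}(f(t, x))\bigr| + \bigl|\chi_{\sqrt{\e_k}}(f(t, x)) - \max\{-f(t, x), 0\}\bigr|.
\]
The first summand is bounded by $|f_{\e_k}(t, x) - f(t, x)|$ thanks to the uniform Lipschitz estimate $\|\chi_\delta'\|_\infty \leq 1$ from Lemma \ref{chi} ii), hence tends to zero uniformly in $(t, x)$ by assumption \eqref{ap}; the second is bounded uniformly in $x$ by $\sqrt{\e_k}$ thanks to Lemma \ref{chi} i), and vanishes as $\e_k \searrow 0$.

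Integrating this uniform limit over $(0, L)$ and invoking Lemma \ref{chilemma} to control the left-hand side, I obtain
\[
\int_0^L \max\{-f(t, x), 0\} \, dx = \lim_{k \to \infty} \int_0^L \chi_{\sqrt{\e_k}}(f_{\e_k}(t, x)) \, dx = 0,
\]
since the bound $c \sqrt{t}\, \e_k$ in \eqref{40} vanishes as $\e_k \searrow 0$. Because $f \in C(\overline{\Omega}_T)$, the integrand is continuous and non-negative on $[0, L]$, and a continuous non-negative function with vanishing integral must be identically zero. Hence $f(t, x) \geq 0$ for all $x \in [0, L]$, and since $t \in [0, T]$ was arbitrary, $f \geq 0$ on $\overline{\Omega}_T$. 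The same reasoning applied to $g_{\e_k}$ and $g$ yields $g \geq 0$.

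I do not anticipate any serious obstacle: all the nontrivial analytic content is already encoded in Lemma \ref{chilemma}, whose proof exploits the delicate balance of the scaling $\delta = \sqrt{\e}$ against the energy bound for $\partial_x f_\e$. What remains is a routine passage to the limit based on the uniform convergence of $\chi_{\sqrt{\e_k}}$ to $\max\{-\,\cdot\,, 0\}$ and on the $C(\overline{\Omega}_T)$-convergence supplied by the hypothesis \eqref{ap}.
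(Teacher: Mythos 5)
Your argument is correct and coincides with the paper's own proof: the same triangle-inequality splitting controlled by Lemma \ref{chi} i) and ii), combined with the estimate \eqref{40} from Lemma \ref{chilemma}, yields $\int_0^L \max\{-f(t),0\}\,dx=0$ and hence $f\geq 0$, with $g$ handled identically. The only (harmless) addition is your explicit remark that a continuous non-negative function with vanishing integral is identically zero, which the paper leaves implicit.
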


\begin{proof} Let $(\e_k)_{k\in \N}\in (0,1]$ be such that $\e_k\searrow 0$ for $k\longrightarrow \infty$. Then,
\begin{align*} \|\chi_{\sqrt{\e_k}}(f_{\e_k})-\mbox{max}\{-f,0\}\|_\infty  &\leq \|\chi_{\sqrt{\e_k}}(f_{\e_k})-\chi_{\sqrt{\e_k}}(f)\|_\infty +\|\chi_{\sqrt{\e_k}}(f)-\mbox{max}\{-f,0\}\|_\infty\\[10pt]
&\leq  \|f_{\e_k}-f\|_\infty +\sqrt{\e_k},
\end{align*}
by Lemma \ref{chi} i) and ii). 
Recall that in the previous lemma we have shown that
\[\left|\displaystyle{\int_0^L} \chi_{\sqrt{\e_k}}(f_{\e_k}(t))\, dx \right|\leq c \sqrt{t}\e_k,\qquad \mbox{for all} \quad t\in [0,T],\]
where $c>0$ is a constant independent of $\e\in(0,1]$ and $t\in[0,T]$.
Hence, letting $k$ tend to infinity, implies that \[\displaystyle{\int_0^L}\mbox{max}\{-f(t),0\}\, dx=0\] for all $t\in[0,T]$, which proves the statement for $f$. The non-negativity of $g$ follows by the same argumentation
 \footnote{The proof of Corollary \ref{nonn} is essentially due to Lemma \ref{chilemma}, which provides an estimate depending on $\e$ of the negative part of a function. Remark that we did not claim the non-negativity of $(f_\e,g_\e)_{\e \in (0,1]}$ itself , but only for an accumulation point of this family when $\e \searrow 0$.}.
\end{proof}
Following the idea in \cite{EW3}, we prove in the next theorem that the sequence $(\Gamma_\e)_{\e\in (0,1]}$ already admits the property to be non-negative almost everywhere. 

\begin{thm}\label{nonnG} Assume that $\Gamma^0\geq0$. Then $\Gamma_\e$, $\e \in(0,1]$, is non-negative almost everywhere in $\Omega_T$.
\end{thm}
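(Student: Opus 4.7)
The plan is to adapt the convex-approximation approach of Lemma \ref{chilemma} (originally used for $f_\e, g_\e$) to the second-order parabolic equation satisfied by $\Gamma_\e$. I would test the weak formulation of the $\Gamma_\e$-equation against $\chi_\delta^\prime(\Gamma_\e)$, where $\chi_\delta$ is the smooth approximation of $\max\{-\Id,0\}$ introduced in \eqref{mollifier}.

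For fixed $\e\in(0,1]$, the regularity furnished by Lemmas \ref{GaB}--\ref{GAtD} gives $\Gamma_\e\in L_2(0,T;H^1(0,L))$ and $\partial_t\Gamma_\e\in L_2(0,T;(H^1(0,L))^\prime)$, so $\chi_\delta^\prime(\Gamma_\e)\in L_2(0,T;H^1(0,L))$ is an admissible test function by density in \eqref{T3}. The chain rule in time together with integration by parts in space produces
\begin{align*}
\int_0^L \chi_\delta(\Gamma_\e(T))\,dx - \int_0^L \chi_\delta(\Gamma^0)\,dx &= \int_{\Omega_T} \chi_\delta^{\prime\prime}(\Gamma_\e)\,\tau_\e(\Gamma_\e)\,V_\e\,\partial_x\Gamma_\e\,d(x,t) \\
&\quad - D\int_{\Omega_T}\chi_\delta^{\prime\prime}(\Gamma_\e)(\partial_x\Gamma_\e)^2\,d(x,t),
\end{align*}
where $V_\e$ collects all the velocity contributions present in the $\Gamma_\e$-flux of \eqref{rrsystem} (both the $\partial_x^3$ terms and the Marangoni term $\partial_x\sigma_\e(\Gamma_\e)$). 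For fixed $\e$, $\sigma_\e^\prime$ is bounded and so $V_\e\in L_2(\Omega_T)$.

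Because $\chi_\delta^{\prime\prime}\geq 0$ by convexity, the diffusion contribution above is non-positive and can be discarded. For the remaining convective term, the key observation is that combining $|\tau_\e(s)|\leq|s|$ from \eqref{abtau} with the bound $|s\,\chi_\delta^{\prime\prime}(s)|\leq K$ from Lemma \ref{chi}iii) yields
$$|\chi_\delta^{\prime\prime}(\Gamma_\e)\tau_\e(\Gamma_\e)|\leq K\quad\text{supported on}\quad \{-\delta\leq\Gamma_\e\leq 0\}.$$
Cauchy--Schwarz then produces
$$\int_0^L \chi_\delta(\Gamma_\e(T))\,dx \leq \int_0^L \chi_\delta(\Gamma^0)\,dx + K\left(\int_{\{-\delta\leq\Gamma_\e\leq 0\}}|V_\e|^2\,d(x,t)\right)^{1/2}\left(\int_{\{-\delta\leq\Gamma_\e\leq 0\}}|\partial_x\Gamma_\e|^2\,d(x,t)\right)^{1/2}.$$

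To conclude, let $\delta\searrow 0$. By Lemma \ref{chi}i) and $\Gamma^0\geq 0$, $\int_0^L\chi_\delta(\Gamma^0)\,dx\to 0$, while $\int_0^L\chi_\delta(\Gamma_\e(T))\,dx\to \int_0^L\max\{-\Gamma_\e(T),0\}\,dx$. The sets $\{-\delta\leq\Gamma_\e\leq 0\}$ decrease to $\{\Gamma_\e=0\}$, on which Stampacchia's lemma guarantees $\partial_x\Gamma_\e=0$ almost everywhere. Dominated convergence then sends the second factor to $0$ while the first remains bounded by $\|V_\e\|_{L_2(\Omega_T)}$. Hence $\int_0^L\max\{-\Gamma_\e(T),0\}\,dx\leq 0$, and $\Gamma_\e(T)\geq 0$ almost everywhere for every $T>0$.

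The main technical obstacle is justifying the opening chain-rule identity $\frac{d}{dt}\int\chi_\delta(\Gamma_\e)\,dx=\langle\partial_t\Gamma_\e,\chi_\delta^\prime(\Gamma_\e)\rangle$ with the required test-function regularity. Following the pattern of Lemma \ref{chilemma}, the cleanest route is to establish the identity first at the Galerkin level $\Gamma_\e^n$, where everything is smooth and a truncated Fourier expansion of $\chi_\delta^\prime(\Gamma_\e^n)$ may be used as a test function, and then pass to the limit $n\to\infty$; the strong convergence $\Gamma_\e^n\to\Gamma_\e$ from Section \ref{TRS} together with the Lipschitz continuity of $\chi_\delta^\prime$ and $\chi_\delta^{\prime\prime}$ for fixed $\delta$ yields the convergence of all nonlinear compositions involved.
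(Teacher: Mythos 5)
Your proposal is correct and follows essentially the same route as the paper: test (at the Galerkin level, then pass to the limit using the improved $L_2$-regularity of the flux afforded by $\tau_\e$) against $\chi_\delta^\prime(\Gamma_\e)$, drop the non-positive diffusion term by convexity of $\chi_\delta$, control the convective part via $|\tau_\e(s)\chi_\delta^{\prime\prime}(s)|\leq K$ on $[-\delta,0]$ together with the $L_2$-bounds from the energy estimate and the $L_\infty$-bounds on $a_\e(f_\e),a_\e(g_\e)$, and let $\delta\searrow 0$ using that $\partial_x\Gamma_\e=0$ a.e.\ on $\{\Gamma_\e=0\}$ (the Kinderlehrer--Stampacchia lemma the paper cites). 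The only cosmetic difference is that you keep the Cauchy--Schwarz product while the paper absorbs everything into $c\int_{[-\delta\leq\Gamma_\e\leq 0]}|\partial_x\Gamma_\e|^2\,d(x,t)$; both vanish in the limit for the same reason.
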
  

\begin{proof} Let $\delta >0$ and $\chi_\delta$ the function defined in \eqref{mollifier}. Then, $\chi_\delta(\Gamma_\e^n(t))\in H^1(0,L)$ for all $t> 0$ and 
\begin{align*}
\frac{d}{dt}\displaystyle{\int_0^L} \chi_\delta(\Gamma_\e^n(t))\, dx = \displaystyle{\int_0^L} \chi^\prime_\delta(\Gamma_\e^n(t))\partial_t \Gamma_\e^n(t)\, dx = \displaystyle{\int_0^L} \partial_t \Gamma_\e^n(t)\displaystyle{\sum_{k=0}^n}\left( \chi^\prime_\delta(\Gamma_\e^n(t))\mid\phi_k\right)_2 \phi_k\, dx,
\end{align*}
which yields after integration with respect to time
\begin{equation}\label{42G}
\displaystyle{\int_0^L} \chi_\delta(\Gamma_\e^n(T))\, dx =  \displaystyle{\int_{\Omega_T}} H_\Gamma^{\e,n} \displaystyle{\sum_{k=0}^n}\left( \chi^\prime_\delta(\Gamma_\e^n(t))\mid \phi_k \right)_2 \partial_x \phi_k \, d(x,t)
\end{equation}
for each $T>0$. We can pass to the limit in \eqref{42G}, by the same argument as in the proof of Lemma \ref{chilemma}, and obtain\footnote{Recall that introducing the truncation function $\tau_\e$ in \eqref{rsystem}, provides that $H_\Gamma^\e$ belongs to $L_2(\Omega_T)$ (instead of $L_{\frac{3}{2}}(\Omega_T)$ cf. \eqref{regGamma}). This improved regularity allows to pass to the limit in \eqref{42G}.}
\begin{equation}\label{61}
\displaystyle{\int_0^L}\chi_\delta(\Gamma_\e(T))\,dx  = \displaystyle{\int_{\Omega_T}} H_\Gamma^{\e} \partial_x \chi^\prime_\delta(\Gamma_\e)\, d(x,t),
\end{equation}
where 
$H_\Gamma^\e$ represents the limit of a weakly convergent subsequence of $H_\Gamma^{\e,n}$ in $L_2(\Omega_T)$.
By construction it is $\chi^{\prime\prime}_\delta = 0$ on $\R\setminus [-\delta,0]$, so that \eqref{61} yields
\begin{align*}
\displaystyle{\int_0^L}\chi_\delta(\Gamma_\e(T))\,dx  &= \displaystyle{\int_{[-\delta \leq \Gamma_\e \leq 0]}} \left\{ \displaystyle{\frac{\sqrt{3}}{2}}\tau_\e(\Gamma_\e) \sqrt{a_\e(f_\e)}J_{f,g}^\e+\displaystyle{\frac{\sqrt{3}}{2}}\tau_\e(\Gamma_\e) \sqrt{a_\e(g_\e)}J_g^\e\right.\\[5pt]
&\left.\qquad\qquad+\frac{1}{4}\tau_\e(\Gamma_\e) a_\e(g_\e)\partial_x \sigma_\e(\Gamma_\e)-D\partial_x\Gamma_\e \right\} \chi^{\prime\prime}_\delta(\Gamma_\e)\partial_x  \Gamma_\e\, d(x,t)\\[5pt]
  &\leq  \displaystyle{\int_{[-\delta \leq \Gamma_\e \leq 0]}} \left\{ \displaystyle{\frac{\sqrt{3}}{2}}\tau_\e(\Gamma_\e) \sqrt{a_\e(f_\e)}|J_{f,g}^\e|+\displaystyle{\frac{\sqrt{3}}{2}}\tau_\e(\Gamma_\e) \sqrt{a_\e(g_\e)}|J_g^\e|\right.\\[5pt]
&\left.\qquad\qquad+\frac{1}{4}\tau_\e(\Gamma_\e) a_\e(g_\e)|\partial_x \sigma_\e(\Gamma_\e)| \right\} \chi^{\prime\prime}_\delta(\Gamma_\e)\partial_x  \Gamma_\e\, d(x,t),
\end{align*}
where we used the fact that $\chi^{\prime\prime}_\delta = \delta^{-1}\psi(\frac{\cdot}{\delta})\geq 0$, which implies $-D\chi^{\prime\prime}_\delta(\Gamma_\e)|\partial_x \Gamma_\e|^2\leq 0$. By means of $|\tau_\e(s)\chi_\delta^{\prime\prime}(s)|\leq |s\chi_\delta^{\prime\prime}(s)|\leq K$ if $|s|\leq \delta$ (cf. Lemma \ref{chi} iii)), we find that
\begin{align*}
\displaystyle{\int_0^L}\chi_\delta(\Gamma_\e(T))\,dx & \leq \displaystyle{\frac{\sqrt{3}}{2}}\| \sqrt{a_\e(f_\e)} \|_\infty  K\displaystyle{\int_{[-\delta \leq \Gamma_\e \leq 0] }}  \left|J_{f,g}^\e\partial_x  \Gamma_\e \right|  \,d(x,t) \\[5pt]
&\quad+\displaystyle{\frac{\sqrt{3}}{2}}\left\|\sqrt{a_\e(g_\e)}\right\|_\infty  K\displaystyle{\int_{[-\delta \leq \Gamma_\e \leq 0]}}\left|J_g^\e\partial_x  \Gamma_\e^2\right| \,d(x,t) \\[5pt]
  &\quad+\frac{1}{4}\left\|\sqrt{a_\e(g_\e)}\right\|_\infty K\displaystyle{\int_{[-\delta \leq \Gamma_\e \leq 0]}}\left|  \sqrt{a_\e(g_\e)}\sigma_\e^\prime(\Gamma_\e)     \partial_x\Gamma_\e \right|  \,d(x,t).
\end{align*}
By Hölder's inequality, the estimate implied by the energy equality \eqref{energy}, the bound of $(\partial_x \Gamma_\e)_{\e \in (0,1]}$ in $L_2(\Omega_T)$ and the definition of $a_\e$ together with $(f_\e)_{\e\in (0,1]}, (g_\e)_{\e\in (0,1]}$ being bounded in $L_\infty(\Omega_T)$ (cf. Remark \ref{fepsu}), the above inequality implies that
\begin{align*}
\displaystyle{\int_0^L}\chi_\delta(\Gamma_\e(t))\,dx  \leq c \displaystyle{\int_{[-\delta \leq \Gamma_\e \leq 0]}}\left| \partial_x\Gamma_\e \right|^2  \,d(x,t)
\end{align*}
for some constant $c>0$. It follows from \cite[Lemma A.4]{KS} that for almost all $t\geq 0$
\begin{equation*}
\displaystyle{\int_0^L} \max\, \{-\Gamma_\e(t),0\}\, dx = \lim_{\delta\rightarrow 0}\displaystyle{\int_0^L}\chi_\delta(\Gamma_\e(t))\,dx  \leq 0,
\end{equation*}
which completes the proof.
\end{proof}

\subsection{Existence of Weak Solutions to the Original Problem.}

Now, we prove that there exists indeed an accumulation point of the family of solutions to the regularized system $(f_\e,g_\e,\Gamma_\e)_{\e\in (0,1]}$ being a global weak solution to the original problem \eqref{system2}.
To start with, recall that Lemma \ref{lffg}, Remark \ref{fepsu}, Lemma \ref{lfG}, \eqref{ee5} and \eqref{ee2}--\eqref{eE5} provide the following bounds\footnote{Keep in mind that the bounds $\partial_x^3 f_\e^n$ and $\partial_x^3 g_\e^n$ in $L_2(\Omega_T)$ are not uniform in $\e\in (0,1]$ and we loose these regularities, when passing to the limit $\e\searrow 0$.} :
\allowdisplaybreaks 
\begin{alignat}{2}
\label{37}&\{f_\e, g_\e\mid {\e \in (0,1]}\} &&\mbox{in} \; L_\infty(0,T;H^1(0,L)), \\[5pt]
\label{37a}&\{\partial_t f_\e, \partial_t g_\e\mid {\e \in (0,1]}\} &&\mbox{in} \; L_2(0,T,(H^1(0,L))^\prime), \\[5pt]
\label{JF} & \{J_f^\e\mid {\e \in (0,1]} \} \quad&& \mbox{in} \; L_2(\Omega_T),\\[5pt]
\label{JFG} & \{J_{f,g}^\e\mid {\e \in (0,1]} \} \quad&& \mbox{in} \; L_2(\Omega_T),\\[5pt]
\label{JG} & \{J_g^\e\mid {\e \in (0,1]} \} \quad&& \mbox{in} \; L_2(\Omega_T),\\[5pt]
\label{xx} &\{ \sqrt{a_\e(g_\e)}\partial_x \sigma_\e(\Gamma_\e)\mid \e \in (0,1]\} \qquad&& \mbox{in} \; L_2(\Omega_T).
\end{alignat}
By the same arguments used before, we find a sequence $(\e_k)_{k\in \N} \in (0,1]$ with $\e_k \searrow 0$, such that
\begin{alignat}{2}
\label{34}
&f_{\e_k} \longrightarrow f \qquad \mbox{and}\qquad g_{\e_k} \longrightarrow g \qquad &&\mbox{in}\quad C([0,T],C^\alpha([0,L])),\\[5pt]
\label{35}
&f_{\e_k} \warrow f\qquad \mbox{and}\qquad g_{\e_k} \warrow g \qquad &&\mbox{in}\quad L_2(0,T;H^1([0,L])),\\[5pt]
\label{35a}
&\partial_t f_{\e_k}\warrow \partial_t f\qquad \mbox{and}\qquad \partial_t g_{\e_k}\warrow \partial_t g \qquad &&\mbox{in}\quad L_2(0,T;(H^1([0,L]))^\prime),
\end{alignat}
for $\alpha\in [0,\frac{1}{2})$. 
In particular, after possibly extracting a further subsequence, we obtain that 
\begin{equation*}
\partial_x f_{\e_k}(t) \warrow \partial_ x f(t)\qquad \mbox{and}\qquad \partial_x g_{\e_k}(t) \warrow \partial_x g(t) \quad \mbox{in}\quad L_2(0,L)
\end{equation*}
for almost all $t\in [0,T]$ and 
\begin{equation}\label{51}
f,g \in L_\infty (0,T,H^1(0,L)) \cap C([0,T],C^\alpha([0,L])).
\end{equation}
Recall that (cf. Lemma \ref{lfG})
\begin{align*}
&\{\Gamma_{\e_k}\mid {\e_k} \in (0,1]\}\quad \mbox{is bounded in} \quad L_\infty(0,T;L_2(0,L))\cap L_{2}(0,T;H^1(0,L)),\\
&\{\partial_t \Gamma_{\e_k}\mid {\e_k}\in(0,1]\}\quad \mbox{is bounded in} \quad L_{\frac{3}{2}}(0,T;(W^1_3(0,L))^\prime),
\end{align*}
which implies in view of \cite[Corollary 4]{Sim} that 
\[(\Gamma_{\e_k})_{{\e_k}\in (0,1]} \quad \mbox{is relatively compact in} \quad C([0,T]; (W^1_3(0,L))^\prime) \cap L_2(0,T;C^\alpha([0,L])),\]
for $\alpha\in [0,\frac{1}{2})$. Hence, there exists a subsequence (not relabeled) such that
\begin{equation}\label{36a}\Gamma_{\e_k} \longrightarrow \Gamma \qquad \mbox{in} \quad C([0,T]; (W^1_3(0,L))^\prime) \cap L_2(0,T;C^\alpha([0,L])).
\end{equation}
Similar as in the previous section, we deduce that the limit function $\Gamma$ satisfies
\begin{equation}\label{limit_Gamma}
\Gamma \in L_\infty(0,T;L_2(0,L))\cap L_{2}(0,T;H^1(0,L)) \cap C([0,T]; (W^1_3(0,L))^\prime).
\end{equation}
Furthermore,
\begin{alignat}{2}
\label{36}
&\Gamma_{\e_k} \warrow \Gamma \qquad  &&\mbox{in}\quad L_2(0,T;H^1(0,L)),\\[5pt]
\label{36b}
&\partial_t \Gamma_{\e_k}\warrow \partial_t \Gamma\qquad  &&\mbox{in}\quad L_\frac{3}{2}(0,T;(W^1_3(0,L))^\prime),\\[5pt]
\label{36d}
&\Phi(\Gamma_{\e_k}) \longrightarrow \Phi(\Gamma) \qquad  &&\mbox{in}\quad L_1(\Omega_T),
\end{alignat}
where the last assertion can be proved analogously to Lemma \ref{lfG} iv).
Thus, by  \eqref{51} and \eqref{limit_Gamma} we have shown the regularity for $f,g$ and $\Gamma$ claimed by Theorem \ref{MT} a).
In virtue of Corollary \ref{nonn}, the functions $f$ and $g$ are non-negative, whereas  $\Gamma\geq 0$ almost everywhere in view of Theorem \ref{nonnG} and \eqref{36a}.
Further, $f(0)=f^0$, $g(0)=g^0$ point-wise and $\Gamma(0)=\Gamma^0$ almost everywhere, by \eqref{inC}, \eqref{34} and \eqref{36a}. Therefore claim b) of Theorem \ref{MT} is satisfied.  Due to \eqref{mass}, \eqref{34} and \eqref{36a}, the conservation of mass property in c) of Theorem \ref{MT} is satisfied. 

Last, we establish the identities in Theorem \ref{MT} d). In order to be able to pass to the limit in \eqref{T1}--\eqref{T3}, we investigate, like in \cite[Proof of Theorem 3]{EW3}, the convergence of the regularized terms $\tau_\e$ and $\sigma_\e$, which occur in $H_f^\e, H_g^\e$ and $H_\Gamma^\e$. Note first that (as in Lemma \ref{lfG}), we find that $(\Gamma_\e)_{\e \in (0,1]}$ is bounded in $L_6(\Omega_T)$ and the convergence $\Gamma_\e \longrightarrow \Gamma$ takes place in $L_p(\Omega_T)$ for $p\in [1,6)$. Moreover, by construction
\begin{align}\label{taus}
\tau_\e(s)=s \qquad \mbox{for}\quad 0\leq s\leq s_\e := \left[ \left(\frac{1}{\e C_\Phi}\right)^{\frac{r}{r+1}}-1 \right]^{\frac{1}{r}},
\end{align}
which is due to Assumption A3).
In particular, we obtain that
\begin{equation}\label{sigmae}
\sigma_\e^\prime(s)= \sigma^\prime(s)\qquad \mbox{for all} \quad s\in [0,s_\e].
\end{equation}

\begin{lem}\label{allc} There exists a subsequence (not relabeled) of $(\Gamma_\e)_{\e\in (0,1]}$ satisfying
\begin{itemize}
\item[i) ] $\tau_\e(\Gamma_\e)\longrightarrow \Gamma$ in $L_q(\Omega_T)$ for $q\in [1,6)$,
\item[ii) ] $\partial_x \sigma_\e(\Gamma_\e)\warrow \partial_x \sigma(\Gamma)$ in $L_s(\Omega_T)$ for $s\in [1,\frac{6}{5})$. 
\end{itemize}
\end{lem}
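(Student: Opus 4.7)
The plan is to exploit the non-negativity of $\Gamma_\e$ provided by Theorem \ref{nonnG}, together with the uniform $L_6(\Omega_T)$ bound inherited from Lemma \ref{lfG} i), to transfer the convergences already known for $(\Gamma_\e^n)_n$ to the $\e$-family. As a preliminary step, Riesz interpolation between the strong convergence $\Gamma_{\e_k}\to\Gamma$ in $L_2(0,T;C^\alpha([0,L]))$ from \eqref{36a} and the uniform $L_6(\Omega_T)$ bound gives (after possibly relabeling) $\Gamma_\e \to \Gamma$ in $L_q(\Omega_T)$ for every $q \in [1,6)$.

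For (i), the identity $\tau_\e(s)=s$ on $[0,s_\e]$ from \eqref{taus}, together with $\Gamma_\e \geq 0$ almost everywhere, shows that $\tau_\e(\Gamma_\e)-\Gamma_\e$ is supported in $\{\Gamma_\e>s_\e\}$. By \eqref{abtau} the difference is bounded pointwise by $2\Gamma_\e$ there, so Chebyshev's inequality yields
\[
\|\tau_\e(\Gamma_\e)-\Gamma_\e\|_{L_q(\Omega_T)}^q \leq 2^q \int_{\{\Gamma_\e>s_\e\}} |\Gamma_\e|^q\, d(x,t) \leq 2^q\, s_\e^{q-6}\, \|\Gamma_\e\|_{L_6(\Omega_T)}^6,
\]
which tends to $0$ as $\e\searrow 0$ (since $s_\e\to\infty$) for each $q\in[1,6)$. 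Combining this with the preliminary convergence $\Gamma_\e\to\Gamma$ in $L_q(\Omega_T)$ proves (i).

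For (ii), using \eqref{p} and \eqref{taudef} I rewrite
\[
\partial_x\sigma_\e(\Gamma_\e) = -\tau_\e(\Gamma_\e)\sqrt{\Phi''(\Gamma_\e)}\cdot\sqrt{\Phi''(\Gamma_\e)}\partial_x\Gamma_\e,
\]
together with the analogous identity $\partial_x\sigma(\Gamma)=-\Gamma\sqrt{\Phi''(\Gamma)}\cdot\sqrt{\Phi''(\Gamma)}\partial_x\Gamma$, and then follow precisely the strategy of Lemma \ref{surfc}. A Vitali-type argument based on Assumption A3) and the almost everywhere convergence extracted from part (i) (exactly as in the proof of Lemma \ref{lfG} v)) gives $\sqrt{\Phi''(\Gamma_\e)} \to \sqrt{\Phi''(\Gamma)}$ in $L_p(\Omega_T)$ for any $p\in[1,6)$; combining this with (i) via H\"older's inequality yields the strong convergence $\tau_\e(\Gamma_\e)\sqrt{\Phi''(\Gamma_\e)} \to \Gamma\sqrt{\Phi''(\Gamma)}$ in $L_m(\Omega_T)$ for every $m\in[1,3)$. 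On the other hand, the energy inequality provides a uniform $L_2(\Omega_T)$ bound on $\sqrt{\Phi''(\Gamma_\e)}\partial_x\Gamma_\e$; together with \eqref{36} and Lemma \ref{App} (applied as in Lemma \ref{lfG} v)) the weak limit is identified as $\sqrt{\Phi''(\Gamma)}\partial_x\Gamma$ in $L_2(\Omega_T)$. A final application of Lemma \ref{App} to the product of a strongly convergent sequence in $L_m$ and a weakly convergent sequence in $L_2$ yields $\partial_x\sigma_\e(\Gamma_\e)\warrow\partial_x\sigma(\Gamma)$ in $L_s(\Omega_T)$ with $1/s=1/m+1/2$, hence for every $s\in[1,6/5)$.

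The delicate ingredient is (i): without the non-negativity of $\Gamma_\e$ guaranteed by Theorem \ref{nonnG}, $\tau_\e(\Gamma_\e)$ would differ from $\Gamma_\e$ also on the set of large negative values, and the Chebyshev estimate above would collapse. Every other step is standard once (i) is established, so the proof amounts essentially to combining this estimate with the convergence machinery already developed for the Galerkin level.
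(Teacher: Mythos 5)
Your argument is correct, and it splits naturally into a part that coincides with the paper and a part that takes a different route. For i) your direct estimate of $\|\tau_\e(\Gamma_\e)-\Gamma_\e\|_{L_q}$ on the set $[\Gamma_\e>s_\e]$ (using $\Gamma_\e\geq 0$, $|\tau_\e(s)|\leq|s|$, the uniform $L_6(\Omega_T)$ bound and $s_\e\to\infty$) is essentially the paper's own proof, which only phrases the same Chebyshev-type tail bound through the ratio $\tau_\e(\Gamma_\e)/\Gamma_\e\to 1$ in $L_p(\Omega_T)$ before applying H\"older; the two computations are interchangeable. For ii) you diverge: you recycle the factorization $\partial_x\sigma_\e(\Gamma_\e)=-\tau_\e(\Gamma_\e)\sqrt{\Phi''(\Gamma_\e)}\cdot\sqrt{\Phi''(\Gamma_\e)}\partial_x\Gamma_\e$ from Lemma \ref{surfc}, obtain strong convergence of $\tau_\e(\Gamma_\e)\sqrt{\Phi''(\Gamma_\e)}$ in $L_m$, $m<3$, by Vitali plus part i), identify the weak $L_2$ limit of $\sqrt{\Phi''(\Gamma_\e)}\partial_x\Gamma_\e$ with Lemma \ref{App} ($p=q=r=2$), and then apply Lemma \ref{App} once more to the product; the paper instead proves the strong convergence $\sigma_\e'(\Gamma_\e)\to\sigma'(\Gamma)$ in $L_2(\Omega_T)$ directly, splitting $\Omega_T$ at a level $R\leq s_\e$, using $\sigma_\e'=\sigma'$ on $[0,s_\e]$ together with the Mean Value Theorem on the bounded set and the A3) growth bound on the tail, and only then pairs with $\partial_x\Gamma_\e\warrow\partial_x\Gamma$ via Lemma \ref{App}. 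Both routes are sound: yours economizes by reusing the Galerkin-level machinery (Lemma \ref{lfG} v), Lemma \ref{surfc}) at the cost of two product-limit identifications, whereas the paper's route yields the stronger intermediate statement \eqref{Sig} (strong $L_2$ convergence of $\sigma_\e'(\Gamma_\e)$), which is convenient later when the fluxes are identified on $\mathcal{P}_f\cap\mathcal{P}_g$. Your closing remark correctly isolates the role of Theorem \ref{nonnG}: without $\Gamma_\e\geq 0$ a.e.\ the identity $\tau_\e(\Gamma_\e)=\Gamma_\e$ off the set $[\Gamma_\e>s_\e]$ (and likewise $\sigma_\e'=\sigma'$ there) would fail on the negative range, and both proofs would break down.
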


\begin{proof} Recall that $\Gamma_\e\geq 0$ almost everywhere, due to Theorem \ref{nonnG}.

 i) We show first that 
\begin{align}\label{ctau}
\frac{\tau_\e(\Gamma_\e)}{\Gamma_\e}\longrightarrow 1 \qquad \mbox{in}\quad L_p(\Omega_T) \quad \mbox{for any } \quad p\geq 1.
\end{align} 
Then, the statement follows in virtue of
\begin{align*}
\|\tau_\e(\Gamma_\e)-\Gamma\|_p \leq \|\tau_\e(\Gamma_\e)-\Gamma_\e\|_p+\|\Gamma_\e-\Gamma\|_p \leq \left\|\Gamma_\e\right\|_6\left\|\frac{\tau_\e(\Gamma_\e)}{\Gamma_\e}-1\right\|_{\frac{6p}{6-p}} +\left\|\Gamma_\e-\Gamma\right\|_p,
\end{align*}
$\Gamma_\e \rightarrow \Gamma$ in $L_m(\Omega_T)$ for $m\in [1,6)$ and \eqref{ctau}.
In order to prove \eqref{ctau}, recall that $\tau_\e(\Gamma_\e)= \Gamma_\e$ if $\Gamma_\e \leq s_\e$. Thus, for any $p\geq 1$, there exists a constant $C>0$, such that
\begin{align}\label{tauu}
\begin{split}
\displaystyle{\int_{\Omega_T}}\left|\frac{\tau_\e(\Gamma_\e)}{\Gamma_\e}-1  \right|^p\,d(x,t)  &= \displaystyle{\int_{[\Gamma_\e >s_\e]}}\left|\frac{\tau_\e(\Gamma_\e)}{\Gamma_\e}-1  \right|^p\,d(x,t) \leq \displaystyle{\int_{[\Gamma_\e >s_\e]}}2^p\left( \left|\frac{\tau_\e(\Gamma_\e)}{\Gamma_\e}\right|^p-1 \right) \,d(x,t) \\[5pt]
& \leq 2^{p+1}\displaystyle{\int_{[\Gamma_\e >s_\e]}} 1 \,d(x,t) \leq 2^{p+1}\displaystyle{\int_{[\Gamma_\e >s_\e]}} \frac{\Gamma_\e^{6}}{s_\e^{6}} \,d(x,t)\leq \frac{C}{s_\e^{6}}, 
\end{split}
\end{align}
since $|\tau_\e(s)|\leq |s|$ and $(\Gamma_\e)_{\e \in (0,1]}$ being uniformly bounded in $L_6(\Omega_T)$. Letting $\e$ tend to zero, \eqref{tauu} implies the assertion in view of $s_\e\longrightarrow \infty$ if $\e \searrow 0$.

ii) Given $p\in [1,\frac{6}{r+1})$, $R\geq 1$ and $\e\in (0,1]$, such that $1\leq R\leq s_\e$, we have that
\begin{align}\label{sig}
\begin{split}
\displaystyle{\int_{\Omega_T}}|\sigma_\e^\prime(\Gamma_\e)-\sigma^\prime(\Gamma)|^p\,d(x,t) =&\displaystyle{\int_{[\max\{\Gamma_\e,\Gamma\} \leq R]}}|\sigma_\e^\prime(\Gamma_\e)-\sigma^\prime(\Gamma)|^p\,d(x,t)\\[5pt]
&+\displaystyle{\int_{[\Gamma_\e >R]\cup [\Gamma>R]}}|\sigma_\e^\prime(\Gamma_\e)-\sigma^\prime(\Gamma)|^p\,d(x,t).
\end{split}
 \end{align}
Estimating the integrals on the right-hand side of \eqref{sig} separately, noticing that $\sigma_\e^\prime = \sigma^\prime$ everywhere in $[\Gamma_\e \leq R]$ (cf. \eqref{sigmae}) and since $\sigma^\prime \in C^1(\R)$, the Mean Value Theorem implies that the first integral reduces to
\begin{align}\label{sig1}
\begin{split}
\displaystyle{\int_{[\max\{\Gamma_\e,\Gamma\} \leq R]}}|\sigma_\e^\prime(\Gamma_\e)-\sigma^\prime(\Gamma)|^p\,d(x,t) &= \displaystyle{\int_{[\max\{\Gamma_\e,\Gamma\} \leq R]}}|\sigma^\prime(\Gamma_\e)-\sigma^\prime(\Gamma)|^p\,d(x,t) \\[5pt]
& \leq \|\sigma^{\prime\prime}\|_{L_{\infty}(0,R)} \displaystyle{\int_{[\max\{\Gamma_\e,\Gamma\} \leq R]}}|\Gamma_\e-\Gamma|^p\,d(x,t),
\end{split}
\end{align}
which tends to zero if $\e\searrow 0$ for any $p\in [1,6)$. 
The second integral yields in virtue of $|\sigma_\e^\prime|\leq |\sigma^\prime|$ and Assumption A3)
\begin{align}\label{sig3}
\begin{split}
\displaystyle{\int_{[\Gamma_\e >R]\cup [\Gamma>R]}}|\sigma_\e^\prime(\Gamma_\e)-\sigma^\prime(\Gamma)|^p\,d(x,t) &\leq \displaystyle{\int_{[\Gamma_\e >R]\cup [\Gamma>R]}}2^p \left(|\sigma^\prime(\Gamma_\e)|^p+|\sigma^\prime(\Gamma)|^p\right)\,d(x,t)\\[5pt]
& \leq 2^{p}C_{\Phi}\displaystyle{\int_{[\Gamma_\e >R]\cup [\Gamma>R]}} |\Gamma_\e(\Gamma_\e^r+1)|^p+|\Gamma(\Gamma^r+1)|^p\,d(x,t) \\[5pt]
&\leq 2^{p+1}C_{\Phi}\displaystyle{\int_{[\Gamma_\e >R]\cup [\Gamma>R]}} 2\max\{\Gamma_\e,\Gamma\}^{p(r+1)}\,d(x,t) \\[5pt]
&= \frac{2^{p+3}C_{\Phi}}{R^{6-p(r+1)}}\displaystyle{\int_{[\Gamma_\e >R]\cup [\Gamma>R]}}  \max\{\Gamma_\e,\Gamma\}^{p(r+1)} R^{6-p(r+1)}\,d(x,t) \\[5pt]
&\leq \frac{2^{p+2}C_{\Phi}}{R^{6-p(r+1)}}\displaystyle{\int_{[\Gamma_\e >R]\cup [\Gamma>R]}} \Gamma_\e^6+\Gamma^6\,d(x,t).
\end{split}
\end{align}
Now, we may let first $\e \searrow 0$ and then $R\rightarrow \infty$ in \eqref{sig3}. Gathering \eqref{sig}--\eqref{sig3}, we have shown that
\begin{equation}\label{Sig}
\sigma_\e^\prime(\Gamma_\e)\longrightarrow \sigma^\prime(\Gamma)\qquad \mbox{in}\quad L_2(\Omega_T).
\end{equation}
Recalling that $(\partial_x \sigma_\e(\Gamma_\e))_{\e\in (0,1]} $ is bounded in $L_s(\Omega_T)$ for $s\in [1,\frac{6}{5})$ and  $(\partial_x \Gamma_\e)_{\e\in(0,1]}$ being bounded in $L_2(\Omega_T)$, the statement follows then by Lemma \ref{App},  \eqref{36} and \eqref{Sig}.
\end{proof}
Let $(\e_k)_{k\in\N}\subset (0,1]$ be a sequence tending to zero, when $k\longrightarrow \infty.$ Moreover,
let $\xi\in L_2(0,T;H^1(0,L))$ be given and  $(f_{\e_k}, g_{\e_k},\Gamma_{\e_k})_{k\in\N}$ be the family of solutions to the regularized system, which admits a subsequence converging towards $(f,g,\Gamma)$. 
Studying the convergence of
\begin{align}\label{T1MM}
\displaystyle{\int_0^T}  \langle \partial_t f_{\e_k}(t), \xi(t) \rangle_{H^1} \,dt= \displaystyle{\int_{\Omega_T}}  H_f^{\e_k}\partial_x \xi \,d(x,t),
\end{align}
we observe first, that $(H_f^{{\e_k}})_{k\in\N}$ is bounded in $L_2(\Omega_T)$ in view of \eqref{34} and the energy inequality \eqref{energy} being satisfied for weak solutions of the regularized system. Thus, by Eberlein--Smulyan's theorem, there exists a weakly convergent subsequence (not relabeled), with
\begin{equation}
H_f^{\e_k}\warrow H_f\qquad \mbox{in}\quad L_2(\Omega_T)
\end{equation}
and the right-hand side of \eqref{T1MM} converges to the desired equation in \eqref{MT11}. The convergence of the left-hand side of \eqref{T1MM} is due to \eqref{35a}. Similarly one proves \eqref{MT22} and \eqref{MT3}. 
We show, that the function $H_f$ can be identified with $\frac{f^{\frac{3}{2}}}{\sqrt{3}}J_f$ on the set $\mathcal{P}_f \cap \mathcal{P}_g$ as claimed in Theorem \ref{MT} d).
Define the sets 
\[\mathcal{P}_f^m:=\left\{(t,x)\in\Omega_T:\;f(t,x)>\frac{1}{m}\right\}\qquad \mbox{and}\qquad \mathcal{P}_g^m:=\left\{(t,x)\in\Omega_T:\; g(t,x)>\frac{1}{m}\right\}.\]
Then, $\mathcal{P}_f\cap \mathcal{P}_g= \bigcap_{m=1}^\infty \left(\mathcal{P}_f^m\cap \mathcal{P}_g^m \right)$.
We deduce form the continuous convergences in \eqref{34}, \eqref{35} that for every $m\in \N$ there exists $k_0\in \N$, such that 
\[f_{\e_k}>\frac{1}{2m},\quad g_{\e_k}>\frac{1}{2m}\qquad \mbox{for all} \quad k\geq k_0\quad \mbox{and}\quad (t,x)\in \mathcal{P}_f^m\cap \mathcal{P}_g^m.\]
In view of \eqref{37}, \eqref{JF}, \eqref{JG} and $R>S$, the sequences $(\partial_x^3 f_{\e_k})_{k\geq k_0}$ and $(\partial_x^3 g_{\e_k})_{k\geq k_0}$ are bounded in $L_2(\mathcal{P}_f^m\cap \mathcal{P}_g^m)$ and
\begin{equation}\label{3bound}
\partial_x^3 f_{\e_k}\warrow \partial_x^3 f,\qquad \partial_x^3 g_{\e_k}\warrow \partial_x^3 g\qquad\mbox{in}\quad L_2(\mathcal{P}_f^m\cap \mathcal{P}_g^m)
\end{equation}
for all $m\geq 1$. Concluding, thanks to Lemma \ref{allc}, \eqref{JF}, \eqref{34}  and \eqref{3bound}, there exists a subsequence (not relabeled), such that
\[H_f=\frac{f^{\frac{3}{2}}}{\sqrt{3}}J_f\qquad \mbox{on} \quad \mathcal{P}_f \cap \mathcal{P}_g.\]
Analogously, one proves the corresponding identities appearing in Theorem \ref{MT} d).
Eventually, similar as before, we pass to the limit in the energy inequality \eqref{energy} and obtain claim Theorem \ref{MT} e). 

\section{Appendix}

We state a lemma, which ensures the identification of weak limits of product sequences with the product of the corresponding limits of its factors.
\begin{lem}\label{App}
Let $\Omega\subset \R^n$ and $1< p,q,r< \infty$ with $p,q$ being a dual pair. If  $(f_n)_{n\in \N}$, $(g_n)_{n\in \N}$  are sequences satisfying
\begin{itemize}
\item[i) ]  $(f_ng_n)_{n\in \N}$ being bounded in $L_r(\Omega)$,
\item[ii) ] $f_n \rightarrow f \in L_p(\Omega)$ and $g_n \warrow g \in L_q(\Omega)$,
\end{itemize}
then there exists a weakly convergent subsequence (not relabeled) with
\[f_ng_n \warrow fg \qquad \mbox{in}\quad L_r(\Omega).\]
\end{lem}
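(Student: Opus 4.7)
The plan is to extract a weak limit of $(f_n g_n)$ in $L_r(\Omega)$ by reflexivity and then identify that limit with $fg$ by testing against smooth compactly supported functions. Since $1 < r < \infty$, the space $L_r(\Omega)$ is reflexive, and the uniform bound in hypothesis i) together with the Eberlein--\v{S}mulyan theorem yields a subsequence (not relabeled) and some $h \in L_r(\Omega)$ such that $f_n g_n \warrow h$ in $L_r(\Omega)$. It remains to show $h = fg$ almost everywhere, which a posteriori also guarantees $fg \in L_r(\Omega)$.

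To identify the limit, I would fix an arbitrary $\varphi \in C_c^\infty(\Omega)$ and split
\[
\int_\Omega f_n g_n \varphi \, dx = \int_\Omega (f_n - f) g_n \varphi \, dx + \int_\Omega f g_n \varphi \, dx.
\]
Because $\varphi$ is bounded with compact support, $\varphi \in L_\infty(\Omega)$, so $\|g_n \varphi\|_{L_q(\Omega)} \leq \|\varphi\|_\infty \|g_n\|_{L_q(\Omega)}$, which is uniformly bounded since the weakly convergent sequence $(g_n)$ is bounded in $L_q(\Omega)$. H\"older's inequality on the dual pair $(p,q)$ gives
\[
\left| \int_\Omega (f_n - f) g_n \varphi \, dx \right| \leq \|f_n - f\|_{L_p(\Omega)}\, \|\varphi\|_\infty\, \|g_n\|_{L_q(\Omega)} \longrightarrow 0
\]
by the strong convergence $f_n \to f$ in $L_p(\Omega)$. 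For the second term, $f\varphi \in L_p(\Omega)$ because $\varphi \in L_\infty$ with compact support, and the weak convergence $g_n \warrow g$ in $L_q(\Omega)$ against the test function $f\varphi \in L_p(\Omega)$ yields
\[
\int_\Omega f g_n \varphi \, dx \longrightarrow \int_\Omega f g \varphi \, dx.
\]

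Combining both convergences shows that $\int_\Omega f_n g_n \varphi\, dx \to \int_\Omega fg \varphi \, dx$. On the other hand, since $\varphi \in L_{r'}(\Omega)$ with $r' = r/(r-1)$, the weak convergence $f_n g_n \warrow h$ in $L_r(\Omega)$ provides $\int_\Omega f_n g_n \varphi\, dx \to \int_\Omega h \varphi \, dx$. Uniqueness of limits therefore gives
\[
\int_\Omega (h - fg) \varphi \, dx = 0 \qquad \text{for all } \varphi \in C_c^\infty(\Omega),
\]
which by the fundamental lemma of the calculus of variations forces $h = fg$ almost everywhere in $\Omega$. I do not anticipate a genuine obstacle here; the only subtlety worth being careful about is ensuring that $f\varphi$ lies in the correct space to serve as a test against the weak convergence of $(g_n)$, which is immediate from the $L_\infty$ bound and compact support of $\varphi$.
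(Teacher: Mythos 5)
Your proposal is correct and follows essentially the same route as the paper: extract a weak limit of $(f_ng_n)$ in $L_r(\Omega)$ via reflexivity and Eberlein--\v{S}mulyan, then identify it with $fg$ by testing against $\varphi \in C_c^\infty(\Omega)$, using H\"older together with the strong convergence of $(f_n)$ and the weak convergence of $(g_n)$. Your two-term splitting $\int(f_n-f)g_n\varphi + \int f g_n\varphi$ is in fact a touch cleaner than the paper's decomposition, since it pairs the weak convergence of $(g_n)$ only with the fixed test function $f\varphi$ rather than with the varying factor $f_n\xi$.
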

\begin{proof}
Since $L_r(\Omega)$ is a reflexive Banach space,  Eberlein--Smulyan's theorem implies that there exists a weakly convergent subsequence (not relabeled) such that
\begin{equation}\label{wcon}
f_ng_n \warrow v \qquad \mbox{in}\quad L_r(\Omega),
\end{equation}
 where $v\in L_r(\Omega)$ is the limit function, which we show to coincide with $fg$.
For all $\xi\in C_c^\infty(\Omega)$ we have
\begin{align}\label{identify}
\int_\Omega (fg-v)\xi \,dx = \int_\Omega (f-f_n)g\xi \,dx +\int_\Omega (g-g_n)f_n\xi\,dx +\int_\Omega (g_nf_n-v)\xi\,dx.
\end{align}
The last integral in \eqref{identify} converges to zero in view of \eqref{wcon}. Recalling that the strong convergence of $(f_n)_{n\in \N}$ yields in particular, that $(f_n)_{n\in \N}$ is bounded in $ L_p(\Omega)$, the convergence to zero of the two remaining integrals on the right hand side of \eqref{identify} is a consequence of ii).
\end{proof}

\section*{Acknowledgments}
I am grateful to Joachim Escher and Christoph Walker for proposing this topic of research and for various helpful discussions. This work was supported  by the Deutsche Forschungsgemeinschaft (DFG) (Graduiertenkolleg GRK 1463 Analysis, Geometry and Stringtheory).

\end{document}